\documentclass[a4paper,11pt,twoside]{amsart}

\topmargin=+3pt

\headsep=18pt

\oddsidemargin=-6pt
\evensidemargin=-6pt

\textwidth=467pt
\textheight=655pt

\usepackage{graphicx}
\usepackage{amsfonts}
\usepackage{amsmath}
\usepackage{amsthm}
\usepackage{amssymb}
\usepackage[all]{xy}
\usepackage{hyperref}
\usepackage{aliascnt}

\newtheorem{thm}{Theorem}[section]

\newtheorem{thmInt}{Theorem}[section]

\newaliascnt{prop}{thm}
\newtheorem{prop}[prop]{Proposition}
\aliascntresetthe{prop}

\newaliascnt{lem}{thm}
\newtheorem{lem}[lem]{Lemma}
\aliascntresetthe{lem}

\newaliascnt{cor}{thm}
\newtheorem{cor}[cor]{Corollary}
\aliascntresetthe{cor}

\theoremstyle{definition}

\newaliascnt{definition}{thm}
\newtheorem{definition}[definition]{Definition}
\aliascntresetthe{definition}

\newaliascnt{remark}{thm}
\newtheorem{remark}[remark]{Remark}
\aliascntresetthe{remark}

\newaliascnt{ex}{thm}
\newtheorem{ex}[ex]{Example}
\aliascntresetthe{ex}


\numberwithin{equation}{section}

\DeclareMathOperator{\im}{im} 
\DeclareMathOperator{\cok}{coker} 
\DeclareMathOperator{\spec}{Spec} 
\newcommand{\iso}{\cong}
\newcommand{\niso}{\ncong}

\newcommand{\farg}{-} 
\newcommand{\id}{\mathrm{id}}
\newcommand{\st}{:} 
\newcommand{\comp}{\circ} 
\newcommand{\mor}[1]{\xrightarrow{#1}}
\newcommand{\mono}{\hookrightarrow} 
\newcommand{\epi}{\twoheadrightarrow} 
\newcommand{\card}[1]{\lvert#1\rvert} 
\newcommand{\rest}[1]{|_{#1}} 

\newcommand{\K}{\Bbbk} 
\newcommand{\cat}[1]{{\mathbf{#1}}} 
\newcommand{\opp}{^{\circ}} 

\newcommand{\Ext}{\mathrm{Ext}}

\newcommand{\Br}{\mathrm{Br}}
\newcommand{\Eq}[1][]{\mathbf{Eq}^{\mathrm{#1}}}
\newcommand{\sh}[2][1]{#2[#1]} 
\newcommand{\FM}[2][]{\Phi^{#1}_{#2}} 
\newcommand{\D}[1][]{\mathrm{D}^{#1}} 
\newcommand{\Db}{\D[b]} 
\newcommand{\Dp}[1][]{\cat{Perf}_{#1}} 
\newcommand{\Dq}{\mathrm{D}_{\mathrm{qc}}} 
\newcommand{\dgD}{\mathcal{D}} 
\newcommand{\Perf}{\mathrm{Perf}^{\,\mathrm{dg}}} 
\newcommand{\rd}{\mathbf{R}} 
\newcommand{\ld}{\mathbf{L}} 
\newcommand{\lotimes}{\overset{\ld}{\otimes}} 
\newcommand{\fun}[1]{\mathsf{#1}} 
\newcommand{\Mod}[1]{\mathrm{Mod}(#1)} 
\newcommand{\dgMod}[1]{\mathrm{dgMod}(#1)}
\newcommand{\SF}[1]{\mathrm{SF}(#1)} 
\newcommand{\SFa}[1]{\mathrm{SF}_\alpha(#1)} 
\newcommand{\Coh}{\cat{Coh}}
\newcommand{\Qcoh}{\cat{Qcoh}}
\newcommand{\lto}{\longrightarrow}
\newcommand{\Ob}{\mathrm{Ob}}

\newcommand{\hocolim}{\mathrm{hocolim}\,}

\newcommand{\ZZ}{\mathbb{Z}}

\newcommand{\Ind}{\fun{Ind}}
\newcommand{\Res}{\fun{Res}}
\newcommand{\Hqe}{\cat{Hqe}} 

\newcommand{\Plus}{\coprod}

\newcommand{\ke}{\mathcal{E}}
\newcommand{\cA}{\cat{A}}
\newcommand{\cB}{\cat{B}}
\newcommand{\cC}{\cat{C}}
\newcommand{\cD}{\cat{D}}
\newcommand{\cG}{\cat{G}}
\newcommand{\cL}{\cat{L}}
\newcommand{\cM}{\cat{M}}
\newcommand{\cN}{\cat{N}}
\newcommand{\cR}{\cat{R}}
\newcommand{\cS}{\cat{S}}
\newcommand{\cT}{\cat{T}}
\newcommand{\fE}{\fun{E}}
\newcommand{\fF}{\fun{F}}
\newcommand{\fG}{\fun{G}}
\newcommand{\fH}{\fun{H}}
\newcommand{\fI}{\fun{I}}

\newcommand{\fM}{\fun{M}}
\newcommand{\fPo}{\fun{Y}'}

\newcommand{\ko}{\mathcal{O}}

\newcommand{\fS}{\fun{S}}
\newcommand{\fT}{\fun{T}}
\newcommand{\fQ}{\fun{Q}}
\newcommand{\Ho}{\mathrm{H}^0}
\newcommand{\Yon}[1][\cA]{\fun{Y}^{#1}} 
\newcommand{\YA}[1]{\Yon[\cA](#1)}
\newcommand{\dgYon}[1][\cA]{\fun{Y}^{#1}_{\mathrm{dg}}} 
\newcommand{\comma}[1]{(\Yon\downarrow#1)} 
\newcommand{\fgt}[1]{\fun{F}_{#1}} 
\newcommand{\limsrc}[1]{X_{#1}} 
\newcommand{\limtar}[1]{Y_{#1}} 
\newcommand{\limmap}[1]{\alpha_{#1}} 
\DeclareMathOperator{\dlim}{\underset{\lto}{lim}} 
\newcommand{\Mor}{\mathrm{Mor}}
\newcommand{\inc}[1]{\iota_{#1}} 
\newcommand{\pro}[1]{\rho_{#1}} 
\newcommand{\htp}{\sim} 
\newcommand{\MOD}[1]{\mathrm{MOD}\text{-}#1} 
 
\newcommand{\Ac}{\mathrm{Ac}} 

\begin{document}

	\title[Uniqueness of dg enhancements for Grothendieck categories]{Uniqueness of dg enhancements for the derived category of a Grothendieck category}

	\author{Alberto Canonaco and Paolo Stellari}

	\address{A.C.: Dipartimento di Matematica ``F. Casorati'', Universit{\`a}
	degli Studi di Pavia, Via Ferrata 5, 27100 Pavia, Italy}
	\email{alberto.canonaco@unipv.it}

	\address{P.S.: Dipartimento di Matematica ``F.
	Enriques'', Universit{\`a} degli Studi di Milano, Via Cesare Saldini
	50, 20133 Milano, Italy}
	\email{paolo.stellari@unimi.it}
    \urladdr{\url{http://users.unimi.it/stellari}}
	
	\thanks{A.~C.~ was partially supported by the national research project
	  ``Spazi di Moduli e Teoria di Lie'' (PRIN 2012).
	P.~S.~ is partially supported by the grants FIRB 2012 ``Moduli Spaces and Their Applications'' and
	the national research project ``Geometria delle Variet\`a Proiettive'' (PRIN 2010-11).}

	\keywords{Dg categories, dg enhancements, triangulated categories}

	\subjclass[2010]{14F05, 18E10, 18E30}

\begin{abstract}
We prove that the derived category of a Grothendieck abelian category has a unique dg enhancement. Under some additional assumptions, we show that the same result holds true for its subcategory of compact objects. As a consequence, we deduce that the unbounded derived category of quasi-coherent sheaves on an algebraic stack and  the category of perfect complexes on a noetherian concentrated algebraic stack with quasi-finite affine diagonal and enough perfect coherent sheaves have a unique dg enhancement. In particular, the category of perfect complexes on a noetherian scheme with enough locally free sheaves has a unique dg enhancement.
\end{abstract}

\maketitle

\setcounter{tocdepth}{1}
\tableofcontents

\section*{Introduction}

The relation between triangulated categories and higher categorical structures is highly non-trivial, very rich in nature and with various appearances in the recent developments of derived algebraic geometry. The easiest thing we can do is to produce a triangulated category $\cT$ out of a pretriangulated dg category $\cC$ by taking the homotopy category of $\cC$. Roughly speaking, a pretriangulated dg category $\cC$ whose homotopy category is equivalent to a triangulated category $\cT$ is called a \emph{dg enhancement} (or \emph{enhancement}, for short) of $\cT$.

Now, there exist triangulated categories with no
enhancements at all. For example, this happens to some triangulated
categories naturally arising in topology (see \cite{S} or \cite[Section 3.6]{K} for a discussion about this). Triangulated categories
admitting an enhancement are called \emph{algebraic} (as it is explained, for example, in \cite[Section 3]{S}, algebraic triangulated categories are
often defined in other equivalent ways). In practice, all triangulated
categories one usually encounters in algebra or algebraic geometry are
algebraic. For instance, the derived category of a Grothendieck
category, as well as its full subcategory of compact objects are algebraic. Recall that a \emph{Grothendieck category} is an abelian category $\cG$ which is closed under small coproducts, has a small set of generators $\cS$ and the direct limits of short exact sequences are exact. The objects in $\cS$ are generators in the sense that, for any $C$ in $\cG$, there exists an epimorphism $S\epi C$ in $\cG$, where $S$ is a small coproduct of objects in $\cS$. 

In particular, if $X$ is a scheme or, more generally, an algebraic
stack it is not difficult to construct explicit enhancements of the
derived category $\D(\Qcoh(X))$ of the Grothendieck category of
quasi-coherent sheaves on $X$, of the bounded derived categories of
coherent sheaves $\Db(X)$ and of the category of perfect complexes $\Dp(X)$ on $X$. For example, this can be achieved either by taking complexes of injective sheaves or, under mild assumptions, $\mathrm{\check{C}}$ech resolutions or chain complexes of sheaves in the corresponding categories or perfect complexes (see \cite{BLL} and \cite{LS}).

Even when we know that an enhancement exists, one may wonder whether it is unique. Roughly, we say that a triangulated category $\cT$ has a \emph{unique enhancement} $\cC$ if any other enhancement is related to $\cC$ by a sequence of quasi-equivalences. These are the analogue, at the dg level, of the exact equivalences in the triangulated setting.
Actually, at this level of generality, we may not expect a positive
answer to the above question. Indeed, the result of Dugger and Shipley \cite{DS}
easily yields an example of two $\ZZ$-linear pretriangulated dg categories
which are not quasi-equivalent but whose homotopy categories are
equivalent. The search of a similar example over a field rather than over a commutative ring is still a challenge.

Again, if we move to the geometric setting, then for a long while it was expected that any of the three triangulated categories $\D(\Qcoh(X))$, $\Db(X)$ and $\Dp(X)$ should have unique enhancements, when $X$ is a (quasi-)projective scheme. This was formally stated as a conjecture (even in a stronger form) by Bondal, Larsen and Lunts \cite{BLL}.

As we will explain later, this conjecture was positively solved by Lunts and Orlov in their seminal paper \cite{LO}. It should be noted that the quest for uniqueness of enhancements has a foundational relevance that cannot be overestimated by the `working algebraic geometer'. Let us just mention an instance where the fact of having a unique enhancement has interesting consequences. The homological version of the so called \emph{Mirror Symmetry Conjecture} by Kontsevich \cite{Ko} predicts the existence of an $A_\infty$-equivalence between a dg enhancement of $\Db(X)$, for $X$ a smooth projective scheme, and the Fukaya category of the mirror $Y$ of $X$, which is actually an $A_\infty$-category. The fact that the dg enhancements are unique allows us to conclude that finding an $A_\infty$-equivalence (or rather a sequence of them) is the same as finding an exact equivalence between the corresponding homotopy categories. More generally, several geometric problems can be lifted to the dg level and treated there in a universal way (e.g.\ moduli problems or the characterization of exact functors). Having bridges between the different dg incarnations of the same triangulated or geometric problem is then crucial.

\medskip

Let us now explain the contributions of this paper to the problem of showing the uniqueness of dg enhancements in geometric settings. The first point to make, which should be clear from now on, is that the analysis of these questions about $\D(\Qcoh(X))$ or $\Dp(X)$ (or $\Db(X)$) follows slightly different paths. In particular, they can be deduced from two different general criteria whose statements are similar but whose proofs are rather different in nature.

We first consider the case of $\D(\Qcoh(X))$ and, setting the problem at a more abstract level, we first prove the following general result.

\begin{thmInt}\label{thm:main1}
If $\cG$ is a Grothendieck category, then $\D(\cG)$ has a unique enhancement.
\end{thmInt}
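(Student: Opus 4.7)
The plan is to prove that any enhancement $\cC$ of $\D(\cG)$ is quasi-equivalent to a canonical model, such as the dg category of h-injective complexes in $\cG$, by factoring both through a small dg category built from lifts of a fixed generating set. This is a well-generated analogue of the Lunts--Orlov compact-generation strategy: compactness of generators is replaced by $\alpha$-compactness for a suitable regular cardinal $\alpha$.

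First I would fix a set of generators $\cS$ of $\cG$ and a regular cardinal $\alpha$ such that the objects of $\cS$ are $\alpha$-presentable in $\cG$. After allowing shifts, $\cS$ becomes a set of $\alpha$-compact generators of $\D(\cG)$, which is therefore a well-generated triangulated category in the sense of Neeman; in particular, every object of $\D(\cG)$ is an $\alpha$-filtered homotopy colimit of iterated cones on $\alpha$-coproducts of elements of $\cS$. The existence of h-injective resolutions in $\cG$ (Alonso--Jeremias--Souto, Serpe) ensures that these manipulations are well-defined.

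Next, in the given enhancement $\cC$, I would lift the objects of $\cS$ to a small full dg subcategory $\cA \subseteq \cC$ and show that the restricted Yoneda embedding $\cC \to \SFa(\cA)$, with target the dg category of $\alpha$-continuous semi-free $\cA$-modules, is a quasi-equivalence. Full faithfulness on morphism complexes follows from the fact that the lifts of $\cS$ generate $\cC$ under $\alpha$-filtered homotopy colimits; essential surjectivity follows because every object of $\SFa(\cA)$ is a colimit of representables, and such a colimit can be realized as a homotopy colimit inside the pretriangulated category $\cC$. Applying the same construction to the canonical enhancement of $\D(\cG)$ produces a quasi-equivalence with $\SFa(\cA')$ for the analogous small dg subcategory $\cA'$. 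A final rigidity step shows $\cA \simeq \cA'$ in $\Hqe$: since both sit over the same full subcategory of the heart of $\D(\cG)$, the vanishing of negative Exts in an abelian category allows one to promote the triangulated equivalence $\Ho(\cA) \simeq \Ho(\cA')$ to a quasi-equivalence.

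The most delicate step is the quasi-equivalence $\cC \simeq \SFa(\cA)$. Without compactness, one must choose $\alpha$ with care and then verify that the Yoneda construction respects $\alpha$-coproducts, that the homotopy colimits needed for essential surjectivity actually exist in $\cC$, and that all constructions stay within a fixed universe of dg categories. Once this comparison is in place, the rigidity argument $\cA \simeq \cA'$ is relatively standard and proceeds by truncation of dg resolutions.
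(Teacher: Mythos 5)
Your high-level strategy---replace compactness by $\alpha$-compactness and adapt the Lunts--Orlov program via Porta's theorem---is indeed the route the paper takes. But two of your steps contain genuine gaps.

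First, you assert that if the objects of $\cS$ are $\alpha$-presentable in $\cG$, then after allowing shifts they become a set of $\alpha$-compact generators of $\D(\cG)$. This is not true in general: the images of the generators of $\cG$ in $\D(\cG)$ satisfy (G1) and (G3), but \emph{not} (G2) (this is exactly the content of \autoref{inducedgen} and the remark preceding it). Consequently one cannot simply lift $\cS$ to a full dg subcategory $\cA\subseteq\cC$ and expect the restricted Yoneda functor $\cC\to\SFa{\cA}$ to be a quasi-equivalence. Porta's theorem (\autoref{Porta}) produces a quasi-equivalence $\Ho(\cC)\iso\dgD_\alpha(\cB)$, but the small dg subcategory $\cB$ it furnishes is a potentially much larger enlargement of the lift of $\cS$, chosen so that $\Ho(\cB)$ is closed under $\alpha$-coproducts and equivalent to $\Ho(\cC)^{\alpha}$; the lift of $\cS$ itself need not have these properties.

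Second, the ``rigidity step'' as you describe it does not work. If $\cA\subseteq\cC$ and $\cA'\subseteq\cC'$ are lifts of the heart generators, the morphism complexes $\cA(A_1,A_2)$ and $\cA'(A_1,A_2)$ have cohomology computing $\Ext$ in $\cG$, which vanishes in negative degrees but is typically nonzero in positive degrees. Vanishing of negative $\Ext$s gives you dg functors $\tau_{\le0}(\cA)\to H^0(\cA)$ and $\tau_{\le0}(\cA)\to\cA$, but only the first is a quasi-equivalence; the second is not, because $H^{>0}(\cA)\neq0$. So the roof you obtain runs in the wrong direction and does not show $\cA\iso\cA'$ in $\Hqe$. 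The paper avoids this trap entirely: rather than trying to compare $\cA$ and $\cA'$ directly, it uses the Gabriel--Popescu presentation (\autoref{prop:adj}, \autoref{restfun}) to realize $\D(\cG)$ as a quotient $\dgD(\cA)/\cL$ of the derived category of a \emph{degree-zero} small dg category, and then compares each enhancement against the fixed reference $\SF{\cA}/\cL'$. Proving that the resulting quasi-functor is an isomorphism in $\Hqe$ is the technical heart of \autoref{sect:abstract} and \autoref{sect:firstcrit} (comparing the two functors $\fF_1,\fF_2\colon\dgD(\cA)\to\dgD_\alpha(\cB)$ via stupid truncations and the uniqueness results on exact functors), and is considerably more involved than ``truncation of dg resolutions.''
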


We will explain later some key features in the proof. For the moment, we just recall that the main geometric applications are the following:
\begin{itemize}
\smallskip
\item If $X$ is an algebraic stack, $\D(\Qcoh(X))$ has a unique enhancement (see \autoref{cor:geo1});
\smallskip
\item If $X$ is scheme and $\alpha$ is an element in the Brauer group $\Br(X)$ of $X$, then the twisted derived category $\D(\Qcoh(X,\alpha))$ has a unique enhancement (see \autoref{cor:geo2}).
\end{itemize}

\smallskip

Now, if we want to study the enhancements of $\Dp(X)$ (and, consequently, of $\Db(X)$), we should keep in mind that if $X$ is a quasi-compact and semi-separated scheme, then $\Dp(X)$ is the triangulated subcategory of $\D(\Qcoh(X))$ consisting of compact objects. Our general result in this direction is then the following.

\begin{thmInt}\label{thm:crit2}
Let $\cG$ be a Grothendieck category with a small set $\cA$ of generators such that
\begin{enumerate}
\item $\cA$ is closed under finite coproducts;
\item Every object of $\cA$ is a noetherian object in $\cG$;
\item If $f\colon A'\epi A$ is an epimorphism of $\cG$ with
$A,A'\in\cA$, then $\ker f\in\cA$;
\item For every $A\in\cA$ there exists $N(A)>0$ such that
$\D(\cG)\left(A,\sh[N(A)]{A'}\right)=0$ for every $A'\in\cA$.
\end{enumerate}
Then $\D(\cG)^c$ has a unique enhancement.
\end{thmInt}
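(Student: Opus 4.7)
The plan is to deduce Theorem B from Theorem A by promoting every dg enhancement of $\D(\cG)^c$ to an enhancement of the whole derived category $\D(\cG)$, and then invoking Theorem A on the larger enhancements. Given a dg enhancement $\cC$ of $\D(\cG)^c$, the natural candidate is the derived dg category of dg modules over $\cC$ (with a suitable model, e.g.\ h-injective or semi-free), whose homotopy category $\D(\cC)$ is compactly generated, has $\cC$ as a set of compact generators via dg Yoneda, and reproduces $\cC$ up to idempotent completion on the compact part.

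The first task is to use hypotheses (1)--(4) to show that $\D(\cG)$ is compactly generated and that $\D(\cG)^c$ coincides with the thick triangulated subcategory generated by $\cA$. Conditions (1)--(3) endow $\cA$ with the structure of an exact-like subcategory of noetherian generators, making $\cG$ locally noetherian and providing enough structure to resolve objects. Condition (4) is the crucial input forcing each $A \in \cA$ to be compact in $\D(\cG)$: the uniform cohomological bound on Ext-groups between objects of $\cA$ propagates, via a standard truncation and spectral sequence argument, to the commutation of $\Hom_{\D(\cG)}(A,\farg)$ with small coproducts against arbitrary objects of $\cG$.

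The second task is to promote the triangulated identification $\D(\cC) \simeq \D(\cG)$ to an identification of enhancements. Once $\D(\cG)$ is known to be compactly generated by an enhancement of $\cC$, the Keller-type machinery exhibits $\dgMod{\cC}$ as an enhancement of $\D(\cG)$. Applying Theorem A to the two enhancements $\dgMod{\cC_1}$ and $\dgMod{\cC_2}$ arising from any two enhancements $\cC_1, \cC_2$ of $\D(\cG)^c$ then produces a quasi-equivalence $\Phi$ between them.

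Finally, any quasi-equivalence sends compact objects to compact objects, and the compact objects of $\dgMod{\cC_i}$ are precisely $\cC_i$ (no idempotent completion is needed, since $\D(\cG)^c$ is already idempotent complete as the compact part of a compactly generated triangulated category). Restricting $\Phi$ to the compact parts yields the desired quasi-equivalence $\cC_1 \sim \cC_2$. The main obstacle I anticipate lies in the second task: verifying that $\dgMod{\cC}$ is genuinely an enhancement of $\D(\cG)$, and not merely of some triangulated category abstractly equivalent to it. This compatibility, which is what allows Theorem A to be invoked cleanly, is where all four hypotheses must be used in concert, with (2) and (4) in particular controlling how the generators in $\cA$ sit inside both dg pictures and how compactness is detected through the dg Yoneda embedding.
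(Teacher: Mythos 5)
Your plan---deduce Theorem B by passing from an arbitrary enhancement $\cC$ of $\D(\cG)^c$ to $\SF{\cC}$, applying Theorem A to the resulting ``big'' enhancements, and then restricting back to compacts---stalls precisely at the point you flag as the main obstacle, but the obstacle is not merely a compatibility to be checked: it is a circularity. To invoke Theorem A you need $\Ho(\SF{\cC})$ to be exactly equivalent to $\D(\cG)$ for \emph{every} enhancement $\cC$ of $\D(\cG)^c$. However, $\Ho(\SF{\cC})$ depends on $\cC$ up to quasi-equivalence, not merely on $\Ho(\cC)$ up to triangulated equivalence: two non-quasi-equivalent pretriangulated dg categories $\cC_1,\cC_2$ with $\Ho(\cC_1)\simeq\Ho(\cC_2)$ need not produce triangulated-equivalent $\Ho(\SF{\cC_1})$ and $\Ho(\SF{\cC_2})$. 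So proving $\Ho(\SF{\cC})\simeq\D(\cG)$ for an arbitrary $\cC$ would already require knowing that $\cC$ is quasi-equivalent to the canonical enhancement of $\D(\cG)^c$ (the full dg subcategory of compacts inside $\SF{\cA}/\cL'$), which is exactly the uniqueness statement under proof. The Dugger--Shipley phenomenon cited in the introduction is precisely why this promotion cannot be taken for granted.

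The paper instead works from the ``big'' side from the start: it realizes $\D(\cG)\simeq\dgD(\cA)/\cL$ via the Gabriel--Popescu-type construction of \autoref{prop:adj} and \autoref{restfun}, and then reduces Theorem~B to Lunts--Orlov's criterion for the compact part, \autoref{thm:LO}. The decisive hypothesis there is the telescope-type condition (a), namely that $\cL$ is generated by $\cL\cap\dgD(\cA)^c$, and verifying it is where all four hypotheses (1)--(4) do their work: this is \autoref{cptgen}, which produces explicit bounded complexes with entries in $\cA$ generating $\D_\cN(\cM)$. What you describe as ``a standard truncation and spectral sequence argument'' would at best give compactness of the objects of $\cA$ in $\D(\cG)$, but it does not yield the far stronger assertion that the kernel $\cL$ of the localization is compactly generated by objects compact in the ambient category; that is a telescope-conjecture-type statement, emphasized in the introduction as genuinely non-automatic, and it is indispensable whether one follows the paper's route or tries to make your ``first task'' precise via Neeman's \autoref{thm:NeemanComp}.
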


Here $\D(\cG)^c$ denotes the subcategory of compact objects in $\D(\cG)$. One may wonder why the result above is conditional while \autoref{thm:main1} does not include any specific assumptions on $\cG$. We will try to explain later that this is, in a sense, unavoidable but to reassure the reader about the mildness of (1)--(4), let us now discuss some geometric cases where \autoref{thm:crit2} applies:
\begin{itemize}
\smallskip
\item If $X$ is a noetherian concentrated algebraic stack with quasi-finite affine diagonal and with enough perfect coherent sheaves, then $\Dp(X)$ has a unique enhancement (see \autoref{prop:geocomp});
\smallskip
\item As a special (but maybe easier to understand) instance of the above case, we have that if $X$ is a noetherian scheme with enough locally free sheaves, then $\Dp(X)$ has a unique enhancement (see \autoref{cor:geocomp});
\smallskip
\item Under the same assumptions on the scheme $X$, the category $\Db(X)$ has a unique enhancement (see \autoref{cor:geocomp2}).
\end{itemize}
\smallskip
The (more or less) standard terminology involved in the above statements will be briefly recalled in \autoref{subsect:compex}.

It is very likely that \autoref{thm:main1} and \autoref{thm:crit2} may be used in other geometric contexts. One direct application of the circle of ideas appearing in the proofs of these two results concerns the existence of exact equivalences. In particular, if $X_1$ and $X_2$ are noetherian schemes with enough locally free sheaves then the set of equivalences between $\Dp(X_1)$ and $\Dp(X_2)$ is not empty if and only if the same is true for the set of equivalences between $\D(\Qcoh(X_1))$ and $\D(\Qcoh(X_2))$. This is \autoref{prop:FM}.

\subsection*{The strategy of the proof}

Before entering into some details of the proof it is worth pointing out the general approach to \autoref{thm:main1} and \autoref{thm:crit2}. Even if these results have a dg flavour, the idea is to reduce them to questions about Verdier quotients of triangulated categories. Unfortunately, some of these latter questions are highly non-trivial and involve deep problems concerning the description of the subcategory of compact objects of a quotient. This is the reason why our proofs, which are conceptually quite simple, become technically rather involved.

\smallskip

Let us try to make this more precise and consider first
\autoref{thm:main1}. The key observation is that the derived category
$\D(\cG)$ of a Grothendieck category $\cG$ is \emph{well generated} in the
sense of Neeman \cite{N2}. Thus one can choose a small set $\cA$ of generators for $\cG$ such that $\D(\cG)$ is naturally equivalent to the quotient $\dgD(\cA)/\cL$, where $\dgD(\cA)$ is the derived category of $\cA$, seen as a dg category, and $\cL$ is an appropriate localizing subcategory of $\dgD(\cA)$.

This is carried out in \autoref{subsect:Grothabstr}, where we also explain that \autoref{thm:main1} follows easily once we prove the following general criterion.

\begin{thmInt}\label{thm:crit1}
Let $\cA$ be a small category considered as a dg category concentrated in degree $0$ and let $\cL$ be a localizing subcategory of $\dgD(\cA)$ such that:
\begin{itemize}
\item[{\rm (a)}] The quotient $\dgD(\cA)/\cL$ is a well generated triangulated category;
\item[{\rm (b)}] The quotient functor $\fQ\colon\dgD(\cA)\to\dgD(\cA)/\cL$ is right vanishing.
\end{itemize}
Then $\dgD(\cA)/\cL$ has a unique enhancement.
\end{thmInt}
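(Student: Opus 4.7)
My plan is to fix a ``canonical'' dg enhancement of $\dgD(\cA)/\cL$ and then show that any other enhancement is connected to it by a zigzag of quasi-equivalences. The canonical enhancement $\ke_0$ is obtained as follows: take a dg enhancement $\Dg(\cA)$ of $\dgD(\cA)$ (for instance, the dg category of h-injective dg $\cA$-modules), choose a full dg subcategory $\cL^{\mathrm{dg}}\subseteq\Dg(\cA)$ with $\Ho(\cL^{\mathrm{dg}})=\cL$, and form the Drinfeld dg quotient $\ke_0 := \Dg(\cA)/\cL^{\mathrm{dg}}$. Writing $\cT:=\dgD(\cA)/\cL$, the target of the comparison is any given enhancement $\cC$ with $\Ho(\cC)\iso\cT$.

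The core of the proof is to transport the ordinary category $\cA$ rigidly into $\cC$. For each $A\in\cA$ pick $F(A)\in\cC$ whose image in $\Ho(\cC)$ is isomorphic to $\fQ(\Yon(A))$, and let $\cB\subseteq\cC$ be the full dg subcategory on $\{F(A)\}_{A\in\cA}$. Condition (b), specialised to one-element coproducts, gives $H^k\Hom_\cB(F(A),F(A'))=0$ for all $k<0$. After replacing $\cB$ by its good truncation $\tau^{\ge 0}\cB$ (a quasi-equivalence), the Hom complexes live in nonnegative degrees; in particular, the degree-$0$ cocycles coincide with $\cT(\fQ\Yon A,\fQ\Yon A')$. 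This lets me lift the Yoneda-then-quotient functor $\cA\to\Ho(\cB)$ to a strict dg functor $\alpha\colon\cA\to\tau^{\ge 0}\cB$ by choosing degree-$0$ cocycle representatives, with no obstruction since $\cA$ is concentrated in degree $0$.

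I would then extend $\alpha$ to a comparison of enhancements. Using Keller's Morita-theoretic machinery, $\alpha$ extends to a dg functor $\Psi\colon\Dg(\cA)\to\widehat{\cC}$, where $\widehat{\cC}$ is an enlargement of $\cC$ that admits all small coproducts (available through hypothesis (a), since $\cT$ is well generated, so the cocompletion of $\cT$ can be dg-enhanced compatibly). The coproduct form of (b) is precisely what ensures that on the homotopy level $\Psi$ sends coproducts of Yoneda objects to the right objects in $\cT$, and in particular that $\Psi$ vanishes on $\cL$. Hence $\Psi$ factors through the dg quotient, producing $\bar\Psi\colon\ke_0\to\widehat{\cC}$ inducing the identity of $\cT$ on $\Ho$.

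Finally, $\bar\Psi$ should be a quasi-equivalence: full faithfulness on the Yoneda generators is immediate from the construction, and spreads to all of $\ke_0$ because these generators are compact and $\bar\Psi$ preserves coproducts; essential surjectivity follows because the Yoneda images generate $\cT$ as a localizing subcategory. The main obstacle I foresee is the extension step---turning $\alpha$ into a coproduct-preserving dg functor that genuinely annihilates $\cL^{\mathrm{dg}}$, not merely $\cL$ on the homotopy level. This is where the coproduct form of (b), rather than the bare vanishing of negative Ext groups, becomes decisive: it is exactly what rules out Dugger--Shipley-type obstructions coming from incompatible higher multiplications on coproducts of generators.
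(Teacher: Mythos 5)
Your high-level plan matches the paper's: lift the composite $\cA\to\Ho(\cC)$ to a strict dg functor via good truncation (the paper uses $\tau_{\le 0}\cB_0$, you use $\tau^{\ge 0}\cB$, same device), induce a coproduct-preserving comparison, show it kills $\cL$, and conclude with full faithfulness plus generation. But the argument you give for the last step contains a genuine error.

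You write that full faithfulness ``spreads to all of $\ke_0$ because these generators are compact.'' Under hypotheses (a) and (b) the objects $\fQ(\Yon(A))$ are \emph{not} compact in $\dgD(\cA)/\cL$ — removing that assumption is precisely the point of the theorem. In \cite[Theorem 2.7]{LO} one assumes that $\fQ$ sends $\dgD(\cA)^c$ to $(\dgD(\cA)/\cL)^c$, which does force $\fQ(\Yon(A))$ to be compact; here one only assumes well generation, and as the introduction points out there exist Grothendieck categories (cf.\ Neeman \cite{N3}) whose derived category is well generated but not compactly generated, so the $\fQ(\Yon(A))$ cannot be expected to be compact in general. The paper gets around this in two places that your sketch skips: (i) it replaces $\cC$ not by a vague ``cocomplete enlargement $\widehat{\cC}$'' but by $\SFa{\cB}$ for a carefully chosen small full dg subcategory $\cB\subseteq\cC$ closed under $\alpha$-coproducts — this is \autoref{Porta}, which depends on the structure theory of well generated triangulated categories (Porta's theorem), and is what makes the extension of $\alpha$ to a coproduct-preserving functor precise; (ii) to show $\fF_{\fQ\Yon(A),C}$ is an isomorphism for arbitrary $C$ (not just $C=\fQ\Yon(A')$), it uses the right adjoint $\fQ^R$ (which exists by well generation, \autoref{rmk:exadj}), sets $P=\fQ^R(C)$, and then runs the stupid-truncation argument of \autoref{cor:f2} and \autoref{prop:f3} on $P$. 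After that, the spreading step uses not compactness but the facts that the subcategory $\cS$ of objects $S$ for which $\fF_{S,-}$ is an isomorphism is localizing (because $\fF$ preserves coproducts) and that $\fQ(\Yon(\cA))$ generates by (G1) and \autoref{prop:PortaG1}. Without these ingredients the fully-faithfulness step, and hence the proof, does not go through.

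Your closing remark that the coproduct form of (b) ``rules out Dugger--Shipley-type obstructions'' is suggestive but is not where (b) is actually used: in the paper, (b) is the input to \autoref{prop:f1}--\autoref{prop:f3}, which give uniqueness of the comparison functor on bounded-above complexes, and it is what makes $\tau_{\le 0}(\cB_0)\to\Ho(\cB_0)$ a quasi-equivalence. The Dugger--Shipley phenomenon is avoided by the presence of the degree-$0$ category $\cA$ and the structure theorem for well generated categories, not by (b) alone.
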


We will give the precise definitions of well generated triangulated category and right vanishing functor, respectively, in \autoref{def:wellgen} and \autoref{abc}. For the moment, it is enough to keep in mind that every compactly generated triangulated category is well generated. Moreover, as explained later in \autoref{ex:rightvancompact}, $\fQ$ is right vanishing if it satisfies the following conditions:
\begin{enumerate}
\item[(b.1)] $\fQ(\Yon(A))$ is compact in $\dgD(\cA)/\cL$, for every $A\in\cA$;
\item[(b.2)] $\dgD(\cA)/\cL\left(\fQ(\Yon(A)),\fQ(\Yon(A'))[k]\right)=0$, for every $A,A'\in\cA$ and every integer $k<0$.
\end{enumerate}
Here $\Yon\colon\cA\to\dgD(\cA)$ is the Yoneda functor, while $\dgD(\cA)/\cL\left(\farg,\farg\right)$ denotes the Hom-space in the category $\dgD(\cA)/\cL$.

The idea of the proof, which occupies the whole of \autoref{sect:firstcrit}, is very much inspired by the proof of \cite[Theorem 2.7]{LO} but it differs at some technical steps. We will try to clarify them in a while when comparing our results to those in \cite{LO}. The geometric applications mentioned above and discussed in \autoref{subsect:Grothex}, can be deduced easily from the fact that, in all those cases, the category of quasi-coherent sheaves is a Grothendieck category, under our assumptions on $X$.

\medskip

Once we have the equivalence $\D(\cG)\iso\dgD(\cA)/\cL$ as above, it is clear that to prove \autoref{thm:crit2} we have to show that the triangulated subcategory $(\dgD(\cA)/\cL)^c$ of compact objects in $\dgD(\cA)/\cL$ has a unique enhancement, for a smart choice of $\cA$ (see \autoref{subsect:firstred}). For this, one may hope to use \autoref{thm:LO} which was proved by Lunts and Orlov in \cite{LO}. Indeed, this criterion for uniqueness asserts that if we change (a) in \autoref{thm:crit1} to
\begin{itemize}
\item[(a')] $\cL^c=\cL\cap\dgD(\cA)^c$ and $\cL$ is generated by $\cL^c$,
\end{itemize}
and we keep (b), then we can deduce that $(\dgD(\cA)/\cL)^c$ has a unique enhancement as well. In this case, the proof is not too difficult, as we can use the fact that $(\dgD(\cA)/\cL)^c$ and $\dgD(\cA)^c/\cL^c$  are nicely related, as explained in \cite{N1} (see \autoref{thm:NeemanComp}).

The issue here is that (a') is not easily verified. Indeed, if $\cL$ satisfies (a'), then the inclusion functor $\cL\hookrightarrow\cT$ has a right adjoint which preserves small coproducts. In general, given a compactly generated triangulated category $\cT$ closed under small coproducts, a localizing subcategory $\cL$ of $\cT$ such that the inclusion $\cL\hookrightarrow\cT$ has the above property is called a \emph{smashing subcategory}.

For a while, it was conjectured that all smashing subcategories $\cL$ of a triangulated category $\cT$ as above should verify (a'). This goes under the name of \emph{Telescope Conjecture} (see \cite[1.33]{Ra} and \cite[3.4]{Bou}). 
Unfortunately, the Telescope Conjecture is known to be false in this generality \cite{KTel} and to be true in very few examples (see, for example, \cite{NTel}). This shows that we cannot expect that (a') holds true in general or easily.

In view of this discussion, the main task which is carried out in \autoref{subsect:assumpt} is to identify the correct choice for $\cA$ such that (a') holds for the corresponding localizing subcategory $\cL$. To get this, one has to impose some additional assumptions on $\cG$ and $\cA$. This is the reason why the hypotheses (1)--(4) appear in \autoref{thm:crit2}. Assuming this, the proof of \autoref{thm:crit2} is contained in \autoref{subsect:assumpt} and the core of the argument is then \autoref{cptgen}.

The applications concerning the uniqueness of enhancements for $\Dp(X)$ (see \autoref{prop:geocomp} and \autoref{cor:geocomp}) and $\Db(X)$ (see \autoref{cor:geocomp2}) are rather easy consequences once \autoref{thm:crit2} and, more precisely, \autoref{cptgen} are established.

\subsection*{Related work}

As we recalled before, in \cite{BLL} Bondal, Larsen and Lunts first conjectured that all enhancements of $\Db(X)$, for $X$ a smooth projective scheme, should be unique. In the same paper, they show that all `standard' enhancements are related by quasi-equivalences, giving the first evidence to their conjecture.

After that, the main reference is \cite{LO} which is certainly the principal source of inspiration for this paper as well. Let us briefly summarize the results contained in that paper and compare them to ours. For $\cA$ a small category as in \autoref{thm:crit1}, Lunts and Orlov show that $\dgD(\cA)/\cL$ has a unique enhancement if $\fQ$ has a right adjoint and (b.1) and (b.2) above hold true.
This is \cite[Theorem 2.7]{LO}. It should be noted that the existence of the right adjoint to $\fQ$ and (b.1) together imply that $\dgD(\cA)/\cL$ is compactly generated. This is a special instance of our assumption (a) in \autoref{thm:crit1}. Moreover, by \cite{N3}, there are examples of Grothendieck categories whose derived category is not compactly generated but is well generated. Hence \autoref{thm:crit1} is certainly a generalization of \cite[Theorem 2.7]{LO}. The geometric consequences of \cite[Theorem 2.7]{LO}, which are discussed in the same paper, are then:
\begin{itemize}
\smallskip
\item For a Grothendieck category $\cG$, the derived category $\D(\cG)$ has a unique enhancement, if $\cG$ has a small set of generators which are compact in $\D(\cG)$ (see \cite[Theorem 2.9]{LO});
\smallskip
\item This implies that if $X$ is a quasi-compact and separated scheme that has enough locally free sheaves, then $\D(\Qcoh(X))$ has a unique enhancement (see \cite[Theorem 2.10]{LO}).
\end{itemize}
\smallskip

As a second step, Lunts and Orlov deduce from \cite[Theorem 2.8]{LO} that if $X$ is a quasi-projective scheme, then both $\Dp(X)$ and $\Db(X)$ have unique enhancements. A strong version of uniqueness is then discussed. Namely, they prove that these two categories have strongly unique enhancements when $X$ is projective and another technical assumption is satisfied. This is out of the scope of this paper but we believe that the techniques discussed here might have applications to show the strong uniqueness of dg enhancements in new cases. Indeed, \autoref{cptgen} has already been applied to prove the strong uniqueness of the category of perfect supported complexes (see \cite[Theorem 1.2]{CS}).

New interesting enhancements of geometric nature have been recently introduced by Lunts and Schn\"urer in \cite{LS}. Roughly speaking, they were used to show that the dg notion of Fourier--Mukai functor and the triangulated one agree, under some assumptions on the schemes. This important result was previously stated in \cite{To} but without a rigorous proof.

\subsection*{Plan of the paper}

This paper starts with a quick recollection of results about localizations of triangulated categories and of some properties of well generated triangulated categories (see \autoref{sect:wellgen}).

\autoref{sect:dgenhancements} and \autoref{sect:abstract} have a rather abstract nature. They cover some basic material about dg categories and dg enhancements with an emphasis on the case of enhancements of well generated triangulated categories. \autoref{sect:abstract} provides some properties of special functors which are used in the proof of \autoref{thm:crit1}.

In \autoref{sect:firstcrit} we prove \autoref{thm:crit1} while \autoref{thm:main1}, together with its geometric applications, is proved in \autoref{sect:Groth}. The proof of \autoref{thm:crit2}, of \autoref{prop:geocomp} and of \autoref{cor:geocomp} are the contents of \autoref{sect:crit2}.

\autoref{sect:applications} contain two further applications. The first one, concerning the uniqueness of enhancements for $\Db(X)$, is proved in \autoref{subsect:Db}. The second one, about Fourier--Mukai functors is explained in \autoref{subsect:FM}.

\subsection*{Notation}

All categories and functors are assumed to be $\K$-linear, for a fixed commutative ring $\K$. By a $\K$-linear category we mean a category
whose Hom-spaces are $\K$-modules and such that the compositions
are $\K$-bilinear, not assuming that finite coproducts exist.

Throughout the paper, we assume that a universe containing an infinite set is fixed. Several definitions concerning dg categories need special care because they may, in principle, require a change of universe. All possible subtle logical issues in this sense can be overcome in view of \cite[Appendix A]{LO}. A careful reader should have a look at it. After these warnings and to simplify the notation, throughout the rest of the paper we will not mention explicitly the universe we are working in, as it should be clear from the context. The members of this universe will be called small sets. For example, when we speak about small coproducts in a category, we mean coproducts indexed by a small set. If not stated otherwise, we always assume that the Hom-spaces in a category form a small set. A category is called \emph{small} if the isomorphism classes of its objects form a small set.

If $\cT$ is a triangulated category and $\cS$ a full triangulated subcategory of $\cT$, we denote by $\cT/\cS$ the Verdier quotient of $\cT$ by $\cS$. In general, $\cT/\cS$ is not a category according to our convention (namely, the Hom-spaces in $\cT/\cS$ need not be small sets), but it is in many common situations, for instance when $\cT$ is small. 

Given a category $\cC$ and two objects $C_1$ and $C_2$ in $\cC$, we denote by $\cC(C_1,C_2)$ the Hom-space between $C_1$ and $C_2$. If $\fF\colon\cC\to\cD$ is a functor and $C_1$ and $C_2$ are objects of $\cC$, then we denote by $\fF_{C_1,C_2}$ the induced map $\cC(C_1,C_2)\to\cD(\fF(C_1),\fF(C_2))$.

If $I$ is a set, $\card{I}$ denotes the cardinality of $I$.

\section{Well generated triangulated categories and localizations}\label{sect:wellgen}

In this section we use Krause's equivalent treatment (see \cite{K1}) of Neeman's notion of well generated triangulated category (see \cite{N2}). For a very clear survey about this subject, the reader can have a look at \cite{K2}.

From now on, we assume $\cT$ to be a triangulated category with small coproducts. Given a cardinal $\alpha$, an object $S$ of $\cT$ is \emph{$\alpha$-small} if every map $S\to\Plus_{i\in I}X_i$ in $\cT$ (where $I$ is a small set) factors through $\Plus_{i\in J}X_i$, for some $J\subseteq I$ with $\card{J}<\alpha$. Recall that a cardinal $\alpha$ is called \emph{regular} if it is not the sum of fewer than $\alpha$ cardinals, all of them smaller than $\alpha$.

\begin{definition}\label{def:wellgen}
The category $\cT$ is \emph{well generated} if there exists a small set $\cS$ of objects in $\cT$ satisfying the following properties:
\begin{enumerate}
\item[(G1)]\label{G1} An object $X\in\cT$ is isomorphic to $0$, if and only if $\cT(S,X[j])=0$, for all $S\in\cS$ and all $j\in\ZZ$;
\item[(G2)]\label{G2} For every small set of maps $\{X_i\to Y_i\}_{i\in I}$ in $\cT$, the induced map $\cT(S,\Plus_iX_i)\to\cT(S,\Plus_i Y_i)$ is surjective for all $S\in\cS$, if $\cT(S,X_i)\to\cT(S, Y_i)$
is surjective, for all $i\in I$ and all $S\in\cS$;
\item[(G3)]\label{G3} There exists a regular cardinal $\alpha$ such that every object of $\cS$ is $\alpha$-small.
\end{enumerate}
\end{definition}

When the category $\cT$ is well generated and we want to put emphasis on the cardinal $\alpha$ in (G3), we say that $\cT$ is \emph{$\alpha$-well generated} by the set $\cS$. In this situation, following
\cite{K1}, we denote by $\cT^\alpha$ the smallest $\alpha$-localizing
subcategory of $\cT$ containing $\cS$. Recall that a full triangulated
subcategory $\cL$ of $\cT$ is \emph{$\alpha$-localizing} if it is
closed under $\alpha$-coproducts and under direct summands (the latter condition is actually redundant if $\alpha>\aleph_0$). By definition, an $\alpha$-coproduct is a coproduct of strictly less than $\alpha$ summands. On the other hand, $\cL$ is \emph{localizing} if it is closed under small coproducts in $\cT$. The objects in $\cT^\alpha$ are called \emph{$\alpha$-compact}. Thus we will sometimes say that $\cT$ is
\emph{$\alpha$-compactly generated} by the set of \emph{$\alpha$-compact generators} $\cS$.

\begin{remark}\label{rmk:genalpha}
(i) It is easy to observe that the objects in $\cT^\alpha$ are $\alpha$-small (see, for example, \cite[Lemma 5]{K1}).

(ii) As alluded by the notation and explained in \cite{K1,N2}, the subcategory $\cT^\alpha$ does not depend on the choice of the set $\cS$ of $\alpha$-compact generators. Moreover, for any well generated triangulated category $\cT$, one has $\cT=\bigcup_{\beta}\cT^{\beta}$, where $\beta$ runs through all sufficiently large regular cardinals.

(iii) When $\alpha=\aleph_0$, then $\cT^\alpha=\cT^c$, the full
triangulated subcategory of compact objects in $\cT$. Notice that,
in this case, $\cT$ is $\aleph_0$-compactly generated by
$\cS\subseteq\cT^c$ if (G1) holds (indeed, (G3) holds by
definition of compact object, whereas (G2) is automatically
satisfied). Following the usual convention, we simply say that $\cT$ is
\emph{compactly generated} by $\cS$.
\end{remark}

\begin{ex}\label{ex:alphacompgen}
Let $\cG$ be a Grothendieck category. Then the derived category
$\D(\cG)$ is well generated (see \cite[Theorem 0.2]{N3} and \cite[Example 7.7]{K2}).
\end{ex}

Given a small set $\cS$ of objects in $\cT$, we say that \emph{$\cS$ generates $\cT$} if $\cT$ is the smallest localizing subcategory of $\cT$ containing $\cS$.

\begin{prop}[\cite{P}, Proposition 5.1]\label{prop:PortaG1}
Let $\cT$ be a well generated triangulated category. Then a small set $\cS$ of objects in $\cT$ satisfies {\rm (G1)} if and only if $\cS$ generates $\cT$.
\end{prop}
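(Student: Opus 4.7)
The plan is to prove the two implications separately, with the forward direction being essentially formal and the backward direction relying on the existence of a Bousfield localization provided by the well-generation hypothesis.

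For the easier direction, assume $\cS$ generates $\cT$ and suppose $X\in\cT$ satisfies $\cT(S,\sh[j]{X})=0$ for all $S\in\cS$ and all $j\in\ZZ$. I would consider the full subcategory
\[
\cL_X:=\{Y\in\cT\st\cT(Y,\sh[j]{X})=0\text{ for all }j\in\ZZ\}.
\]
A quick check shows $\cL_X$ is a localizing triangulated subcategory of $\cT$: closure under shifts and under the third vertex of a triangle follows from the long exact sequence, while closure under small coproducts uses that $\cT(\Plus_i Y_i,\farg)=\prod_i\cT(Y_i,\farg)$. By hypothesis $\cS\subseteq\cL_X$, so $\cL_X=\cT$, hence $X\in\cL_X$, which forces $\id_X=0$ and thus $X\iso 0$.

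For the more substantial direction, assume $\cS$ satisfies (G1) and let $\cL$ be the smallest localizing subcategory of $\cT$ containing $\cS$; the goal is $\cL=\cT$. Here I would invoke the fundamental fact from the theory of well generated triangulated categories that, since $\cT$ is well generated and $\cL$ is generated by a small set, the inclusion $\cL\mono\cT$ admits a right adjoint $\fG\st\cT\to\cL$ (this is the localization theorem of Neeman, cf.\ the survey \cite{K2}). Consequently, every $X\in\cT$ sits in a Bousfield triangle
\[
\fG(X)\lto X\lto X'\lto\sh{\fG(X)}
\]
with $\fG(X)\in\cL$ and $X'\in\rort{\cL}$, where $\rort{\cL}=\{Z\in\cT\st\cT(Y,Z)=0\text{ for all }Y\in\cL\}$.

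The key observation is then that $X'$ satisfies the hypothesis of (G1) for $\cS$: for any $S\in\cS$ and any $j\in\ZZ$, one has $\sh[-j]{S}\in\cL$ (since $\cL$ is closed under shifts), so $\cT(S,\sh[j]{X'})=\cT(\sh[-j]{S},X')=0$. By (G1) applied to $X'$ one gets $X'\iso 0$, and the triangle above then yields $X\iso\fG(X)\in\cL$. Since $X\in\cT$ was arbitrary, $\cL=\cT$.

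The main obstacle—or rather, the one nontrivial input—is the existence of the right adjoint to $\cL\mono\cT$, which really uses the well-generation of $\cT$ and is not available in general; once that is in hand, the argument is a direct Bousfield-style diagram chase. Everything else is a routine verification that the orthogonality class $\cL_X$ (in the first direction) and the cofiber $X'$ (in the second direction) behave as required under shifts and coproducts.
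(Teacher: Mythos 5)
The paper does not give its own proof of this statement; it is cited from Porta's paper (\cite{P}, Proposition 5.1). Your argument is correct and is the standard Bousfield-localization approach one would expect Porta to use as well. The forward direction is indeed a formal verification that the right-orthogonal class $\cL_X$ is localizing; the backward direction correctly isolates the one nontrivial ingredient, namely that in a well generated triangulated category a localizing subcategory generated by a small set is itself well generated and its inclusion admits a right adjoint (Neeman's localization theorem; cf.\ \cite[Theorems 5.1.1, 7.2.1, 7.4.1]{K2}), after which the computation $\cT(S,\sh[j]{X'})=\cT(\sh[-j]{S},X')=0$ combined with (G1) finishes the job. One small point worth noting for completeness: the claim that the cofiber $X'$ lies in $\rort{\cL}$ uses that the counit $\fG(X)\to X$ induces isomorphisms $\cT(L,\fG(X)[j])\to\cT(L,X[j])$ for all $L\in\cL$ and $j\in\ZZ$, which follows from the adjunction together with the exactness of the right adjoint $\fG$; you implicitly use this but it deserves a word. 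Otherwise the proof is complete and correct.
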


\smallskip

Let us investigate a bit more when quotients by localizing subcategories can be well generated. When $\alpha=\aleph_0$, we have the following result which we will need later.

\begin{thm}[\cite{N1}, Theorem 2.1]\label{thm:NeemanComp}
Let $\cT$ be a compactly generated triangulated category and let $\cL$ be a localizing subcategory which is generated by a small set of compact objects. Then
\begin{itemize}
\item[{\rm (i)}] $\cT/\cL$ has small Hom-sets and it is compactly generated;
\item[{\rm (ii)}] $\cL^c=\cL\cap\cT^c$;
\item[{\rm (iii)}] The quotient functor $\fQ\colon\cT\to\cT/\cL$ sends $\cT^c$ to $(\cT/\cL)^c$;
\item[{\rm (iv)}] The induced functor $\cT^c/\cL^c\to(\cT/\cL)^c$ is fully faithful and identifies $(\cT/\cL)^c$ with the idempotent completion of $\cT^c/\cL^c$.
\end{itemize}
\end{thm}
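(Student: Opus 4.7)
The plan is to use Brown representability to set up the Bousfield localization associated to $\cL$, and then to extract (i)--(iv) from the resulting adjunctions. Since $\cL$ is generated as a localizing subcategory by a small set $\cS \subseteq \cT^c$, and since the inclusion $\iota\colon\cL\hookrightarrow\cT$ preserves coproducts, each $S \in \cS$ is compact also in $\cL$; hence $\cL$ is itself compactly generated. First I would apply Brown representability on $\cL$ to produce a right adjoint $\iota^R$ of $\iota$, by representing, for each $X \in \cT$, the cohomological coproduct-preserving functor $L \mapsto \cT(\iota L, X)$. The functorial triangle $\iota \iota^R X \to X \to X' \to \sh{\iota \iota^R X}$ with $X' \in \rort{\cL}$ then realizes $\cT/\cL$ as the full subcategory $\rort{\cL} \subseteq \cT$: the assignment $X \mapsto X'$ is equivalent to $\fQ$, and the inclusion $\rort{\cL} \hookrightarrow \cT$ is a fully faithful right adjoint $\fQ^R$. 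In particular $\cT/\cL$ has small Hom-spaces.

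Next I would establish the crucial smashing property, that $\rort{\cL}$ is closed under small coproducts in $\cT$: the full subcategory of $L \in \cL$ with $\cT(L, X[n]) = 0$ for all $n$ is localizing, contains $\cS$, and hence equals $\cL$, so membership in $\rort{\cL}$ is detected by $\cT(S, \farg[n])$ for $S \in \cS$; compactness of these $S$ in $\cT$ makes the vanishing stable under coproducts. Equivalently, $\fQ^R$ preserves coproducts, which immediately gives (iii): for $C \in \cT^c$,
\[
\cT/\cL\bigl(\fQ C, \Plus_i Y_i\bigr) = \cT\bigl(C, \Plus_i \fQ^R Y_i\bigr) = \Plus_i \cT(C, \fQ^R Y_i) = \Plus_i \cT/\cL(\fQ C, Y_i).
\]
The same adjunction shows that $\fQ(\cT^c)$ detects zero objects in $\cT/\cL$ (since $\cT^c$ generates $\cT$), making it a set of compact generators and completing (i). For (ii), $\cL \cap \cT^c \subseteq \cL^c$ because coproducts in $\cL$ are computed in $\cT$, while $\cL^c$ consists of retracts of objects in $\mathrm{thick}_\cL(\cS) = \mathrm{thick}_\cT(\cS) \subseteq \cT^c$, using full faithfulness of $\iota$ and the idempotent completeness of $\cT^c$.

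The main obstacle will be (iv), namely proving that the induced functor $\cT^c/\cL^c \to (\cT/\cL)^c$ (well defined by (ii) and (iii)) is fully faithful with essential image dense under retracts. Full faithfulness I would establish by the roof calculus: any morphism $\fQ(C_1) \to \fQ(C_2)$ is represented by a roof $C_1 \xleftarrow{s} X \to C_2$ with the cone of $s$ in $\cL$, and the key technical lemma is that any map from a compact object of $\cT$ into an object $L$ of $\cL$ factors through an object of $\cL^c$. This lemma follows because $\cL$ is compactly generated by $\cL^c$, so $L$ can be written as a homotopy colimit of a tower built from objects in $\cL^c$, through one of which the compact source must factor. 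Using this to replace the cone of $s$ by a compact approximation yields an equivalent roof with cone in $\cL^c$, giving fullness; a parallel argument for roofs that become zero in $\cT/\cL$ gives faithfulness. For the essential image, $(\cT/\cL)^c$ is compactly generated by $\fQ(\cT^c)$ by (i) and (iii), so by the classical Neeman--Ravenel description of compact objects it equals the idempotent completion of the thick subcategory of $\cT/\cL$ generated by $\fQ(\cT^c)$; since $\fQ(\cT^c)$ is already a triangulated subcategory (as the image of the triangulated subcategory $\cT^c$) and agrees with $\cT^c/\cL^c$ by the full faithfulness just shown, (iv) follows.
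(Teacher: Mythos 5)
The paper does not prove this result itself: it is cited verbatim from Neeman \cite{N1}, and your argument — Brown representability on the compactly generated $\cL$ to build the Bousfield localization, identification of $\cT/\cL$ with $\rort{\cL}$, the smashing property from compactness of the generators, and the compact-approximation lemma (every map from a compact object of $\cT$ into $\cL$ factors through $\cL^c$) to control roofs for (iv) — is exactly the strategy of that reference. Your sketch is essentially correct; the one place where the wording is looser than it should be is the factorization lemma, where the approximating tower $\{L_n\}$ is not itself contained in $\cL^c$ (the $L_n$ are infinite coproducts extended over $L_{n-1}$), so one needs compactness of the source twice, once to land in some $L_n$ and once more to cut down to a finite sub-object, but this is the standard argument and your outline captures it.
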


Recall that the fact that $(\cT/\cL)^c$ is the idempotent completion of $\cT^c/\cL^c$ simply means that any object in $(\cT/\cL)^c$ is isomorphic to a summand of an object in $\cT^c/\cL^c$. A similar result holds for well generated triangulated categories (see, for example,  \cite[Theorem 7.2.1]{K2}).

In general, assume that $\cT$ is well generated by a small set
$\cS$. Let $\cL$ be a localizing subcategory of $\cT$ such that the
quotient $\cT/\cL$ is well generated. Denote by
\[
\fQ\colon\cT\longrightarrow\cT/\cL
\]
the quotient functor.

\begin{remark}\label{rmk:exadj}
As we assume that $\cT/\cL$ is well generated, in particular, it has
small Hom-sets. Moreover, $\cT/\cL$ has small coproducts and the
quotient functor $\fQ$ commutes with them by \cite[Corollary
  3.2.11]{N2}. Then it follows from Theorem 5.1.1 and
  Proposition 2.3.1 in  \cite{K2} that the functor $\fQ$ has a fully faithful
right adjoint $\fQ^R$ (hence $\fQ\comp\fQ^R\iso\id$).
\end{remark}

Although in general $\cT/\cL$ is not well generated by the set $\fQ(\cS)$ because (G2) does not hold, we have the following result.

\begin{prop}\label{inducedgen}
For $\cT$, $\cS$ and $\cL$ as above, the set $\fQ(\cS)$ satisfies {\rm (G1)} and {\rm (G3)} in $\cT/\cL$.
\end{prop}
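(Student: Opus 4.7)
The plan is to establish (G1) and (G3) independently, both hinging on the fully faithful right adjoint $\fQ^R$ of $\fQ$ supplied by \autoref{rmk:exadj}; (G2) is not claimed, and correspondingly will not be used.

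For (G1), one fixes $Y\in\cT/\cL$ and translates the required vanishing back to $\cT$. Since $\fQ^R$ is a right adjoint of a triangulated functor, it is itself triangulated and in particular commutes with the shift, so adjunction yields
\[
\cT/\cL(\fQ(S),Y[j])\iso\cT(S,\fQ^R(Y)[j])
\]
for all $S\in\cS$ and $j\in\ZZ$. The set $\cS$ satisfies (G1) in $\cT$ by hypothesis (equivalently, generates $\cT$, cf.\ \autoref{prop:PortaG1}), so the right-hand side vanishes for every such $S,j$ exactly when $\fQ^R(Y)\iso 0$; and since $\fQ\comp\fQ^R\iso\id$, this is in turn equivalent to $Y\iso 0$. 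Combining the two equivalences delivers (G1) for $\fQ(\cS)$.

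For (G3), the idea is to exploit the well-generation hypothesis on $\cT/\cL$ itself. By \autoref{rmk:genalpha}(ii) one has $\cT/\cL=\bigcup_\gamma(\cT/\cL)^\gamma$ with $\gamma$ running through sufficiently large regular cardinals, and by \autoref{rmk:genalpha}(i) every $\gamma$-compact object is $\gamma$-small. For each $S\in\cS$ this yields a regular cardinal $\gamma(S)$ such that $\fQ(S)$ is $\gamma(S)$-small in $\cT/\cL$. As $\cS$ is a small set, one may fix a regular cardinal $\alpha$ strictly larger than $\sup_{S\in\cS}\gamma(S)$; since $\gamma$-smallness is preserved when the cardinal is enlarged, every $\fQ(S)$ becomes $\alpha$-small, which is (G3).

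I do not foresee a serious obstacle here: both parts reduce to inputs already recorded in the previous remarks. The subtle point worth flagging is that the smallness of $\cS$ is precisely what permits the passage from the pointwise cardinals $\gamma(S)$ in (G3) to a single uniform $\alpha$; the silence of the proposition on (G2) then reflects the fact that $\fQ$ need not preserve that axiom, so $\fQ(\cS)$ may well fail to well-generate $\cT/\cL$.
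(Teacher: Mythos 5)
Your proof is correct and follows essentially the same route as the paper: the (G1) part uses the fully faithful right adjoint $\fQ^R$ and the adjunction isomorphism exactly as in the paper's argument (invoking $\fQ\comp\fQ^R\iso\id$ rather than full faithfulness of $\fQ^R$ is an immaterial variation, since both facts come from \autoref{rmk:exadj}), and the (G3) part is the same appeal to \autoref{rmk:genalpha}, merely unpacked object by object before taking a uniform regular cardinal.
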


\begin{proof}
Let $X\in\cT/\cL$ be such that $\cT/\cL(\fQ(S),X[j])=0$ for all $S\in\cS$ and all $j\in\ZZ$. Denoting by $\fQ^R$ the right adjoint of $\fQ$ (see \autoref{rmk:exadj}), we have
\[
\cT/\cL(\fQ(S),X[j])\iso\cT(S,\fQ^R(X)[j]).
\]
As $\cS$ satisfies (G1), it follows that $\fQ^R(X)\iso0$. Since $\fQ^R$ is fully faithful, this implies that $X\iso0$. Therefore $\fQ(\cS)$ satisfies (G1).

Observe moreover that, by \autoref{rmk:genalpha} (ii), the (small) set $\fQ(\cS)$ is contained in $(\cT/\cL)^\alpha$, for some regular cardinal $\alpha$. Hence it satisfies (G3), by \autoref{rmk:genalpha} (i). 
\end{proof}

\section{Dg categories and dg enhancements}\label{sect:dgenhancements}

In this section, we recall some general facts about dg categories and stick to the description of dg enhancements for well generated triangulated categories.

\subsection{A quick tour about dg categories}\label{subsect:dgdefinition}

An excellent survey about dg categories is \cite{K}. Nevertheless, we briefly summarize here what we need in the rest of the paper.

First of all, recall that a \emph{dg category} is a $\K$-linear category $\cC$
such that the morphism spaces
$\cC\left(A,B\right)$ are $\ZZ$-graded $\K$-modules with a differential
$d\colon\cC(A,B)\to\cC(A,B)$ of degree $1$ and the composition maps $\cC(B,C)\otimes_{\K}\cC(A,B)\to\cC(A,C)$
are morphisms of complexes, for all $A,B,C$ in $\cC$. By definition, the identity of each object is
a closed morphism of degree $0$.

\begin{ex}\label{ex:dgcat1}
(i) Any $\K$-linear category has a (trivial) structure of dg category,
with morphism spaces concentrated in degree $0$.

(ii) For a dg category $\cC$, one defines the opposite dg category $\cC\opp$ with the same objects as $\cC$ while $\cC\opp(A,B):=\cC\left(B,A\right)$. One should notice that, given two homogeneous elements $f\in\cC\opp(A,B)$ and $g\in\cC\opp(B,C)$, the composition $g\comp f$ in $\cC\opp$ is defined as the composition $(-1)^{\deg(f)\deg(g)}f\comp g$ in $\cC$.

(iii) Following \cite{Dr}, given a dg category $\cC$ and a full dg
subcategory $\cB$ of $\cC$, one can take the quotient
$\cC/\cB$ which is again a dg category.
\end{ex}

Given a dg category $\cC$ we denote by $\Ho(\cC)$ its
\emph{homotopy category}. To be precise, the objects of $\Ho(\cC)$ are the same
as those of $\cC$ while the morphisms from $A$ to $B$ are obtained
by taking the $0$-th cohomology $H^0(\cC\left(A,B\right))$ of the
complex $\cC\left(A,B\right)$.

\smallskip

A \emph{dg functor} $\fF\colon\cC_1\to\cC_2$ between two dg
categories is the datum of a map $\Ob(\cC_1)\to\Ob(\cC_2)$ and of
morphisms of complexes of $\K$-modules
$\cC_1\left(A,B\right)\to\cC_2\left(\fF(A),\fF(B)\right)$, for
$A,B\in\cC_1$, which are compatible with the compositions and
the units. Clearly, a dg functor $\fF\colon\cC_1\to\cC_2$ induces a functor
$\Ho(\fF)\colon\Ho(\cC_1)\to\Ho(\cC_2)$.

A dg functor $\fF\colon\cC_1\to\cC_2$ is a
\emph{quasi-equivalence}, if the maps
$\cC_1\left(A,B\right)\to\cC_2\left(\fF(A),\fF(B)\right)$ are quasi-isomorphisms, for
every $A,B\in\cC_1$, and $\Ho(\fF)$ is an equivalence.

One can consider the localization $\Hqe$ of the category of (small) dg
categories with respect to quasi-equivalences. Given a dg functor
$\fun{F}$, we will denote with the same symbol its image in
$\Hqe$. 
A morphism in $\Hqe$ is called a \emph{quasi-functor}. By the general theory of localizations and model categories (see, for example, \cite{K,To}), a quasi-functor between two dg categories $\cC_1$ and $\cC_2$ can be represented by a roof
\[
\xymatrix{
&\cC\ar[ld]_-{\fI}\ar[dr]^-{\fF}&\\
\cC_1&&\cC_2,
}
\]
where $\cC$ is a (cofibrant) dg category, $\fI$ is a quasi-equivalence
and $\fF$ is a dg functor. A quasi-functor $\fF$ in $\Hqe$ between the
dg categories $\cC_1$ and $\cC_2$ induces a functor
$\Ho(\fF)\colon\Ho(\cC_1)\to\Ho(\cC_2)$, well defined up to isomorphism.

Given a small dg category $\cC$, one can consider the dg category $\dgMod{\cC}$ of \emph{right dg
$\cC$-modules}. A right dg $\cC$-module is a dg functor
$\fM\colon\cC\opp\to\dgMod{\K}$, where $\dgMod{\K}$ is the dg
category of dg $\K$-modules. It is known that $\Ho(\dgMod{\cC})$ is, in a natural way, a triangulated category with small coproducts (see, for example, \cite{K}).

The full dg subcategory of acyclic right
dg modules is denoted by $\Ac(\cC)$, and $\Ho(\Ac(\cC))$ is a
localizing subcategory of the homotopy category
$\Ho(\dgMod{\cC})$. The objects of $\Ac(\cC)$ are the dg $\cC$-modules $\fM$ such that the complex $\fM(C)$ of $\K$-modules is acyclic, for all $C$ in $\cC$. A right dg $\cC$-module is \emph{representable} if it is contained
in the image of the Yoneda dg functor
\[
\dgYon[\cC]\colon\cC\to\dgMod{\cC}\qquad
A\mapsto\cC\left(\farg,A\right).
\]

The \emph{derived category} of the dg
category $\cC$ is the Verdier quotient
\[
\dgD(\cC):=\Ho(\dgMod{\cC})/\Ho(\Ac(\cC)),
\]
which turns out to be a triangulated category with small coproducts.
Following \cite{Dr}, one could first take the quotient $\dgMod{\cC}/\Ac(\cC)$ of the corresponding dg categories. Again by \cite{Dr}, there is a natural exact equivalence
\begin{equation}\label{Drinfeld}
\dgD(\cC)=\Ho(\dgMod{\cC})/\Ho(\Ac(\cC))\iso\Ho(\dgMod{\cC}/\Ac(\cC)).
\end{equation}

A right dg $\cC$-module is \emph{free} if it is isomorphic to a
small coproduct of dg modules of the form
$\dgYon[\cC](A)[m]$, where $A\in\cC$ and $m\in\ZZ$. A right dg $\cC$-module $\fM$ is \emph{semi-free} if it has a filtration
\begin{equation}\label{eqn:filt}
0=\fM_0\subseteq\fM_1\subseteq\ldots=\fM
\end{equation}
such that $\fM_j$ is a dg $\cC$-module,  $\fM_j/\fM_{j-1}$ is free, for all $j>0$, and $\fM$ is the colimit of the $\fM_j$'s. We denote by
$\SF{\cC}$ the full dg subcategory of $\dgMod{\cC}$ consisting of semi-free dg modules. Obviously the image of $\dgYon[\cC]\colon\cC\to\dgMod{\cC}$ is contained in $\SF{\cC}$.

\begin{remark}\label{rmk:pretr}
(i) It is easy to see that, for a dg category $\cC$, the homotopy category $\Ho(\SF{\cC})$ is a full triangulated subcategory of $\Ho(\dgMod{\cC})$. The dg category $\cC$ is called \emph{pretriangulated} if the essential image of the functor $\Ho(\dgYon[\cC])\colon\Ho(\cC)\to\Ho(\SF{\cC})$ is a triangulated subcategory.

(ii)  Given a quasi-functor $\fF\colon\cC_1\to\cC_2$ between two pretriangulated dg categories, the induced functor $\Ho(\fF)\colon\Ho(\cC_1)\to\Ho(\cC_2)$ is an exact functor between triangulated categories.

(iii) By \cite[Lemma B.3]{Dr}, there is a natural equivalence of triangulated categories $\Ho(\SF{\cC})\iso\dgD(\cC)$. We can actually be more precise about it. Indeed, the composition of natural dg functors
\[
\fH\colon\SF{\cC}\hookrightarrow\dgMod{\cC}\to\dgMod{\cC}/\Ac(\cC)
\]
is a quasi-equivalence. So, up to composing with \eqref{Drinfeld}, $\Ho(\fH)$ provides the exact equivalence $\Ho(\SF{\cC})\iso\dgD(\cC)$ mentioned above.
\end{remark}

If we are given a dg functor  $\fF\colon\cC_1\to\cC_2$, there exist dg functors
\[
\Ind(\fF)\colon\dgMod{\cC_1}\to\dgMod{\cC_2}
\qquad\Res(\fF)\colon\dgMod{\cC_2}\to\dgMod{\cC_1}.
\]
While $\Res(\fF)$ is simply defined by $\fM\mapsto\fM\comp\fF\opp$, the reader can have a look at \cite[Sect.\ 14]{Dr} for the explicit
definition and properties of $\Ind(\fF)$. Let us just observe that $\Ind(\fF)$ is left adjoint to $\Res(\fF)$, it preserves semi-free dg modules and
$\Ind(\fF)\colon\SF{\cC_1}\to\SF{\cC_2}$ is a quasi-equivalence if
$\fF\colon\cC_1\to\cC_2$ is such. Moreover, $\Ind(\fF)$ commutes with
the Yoneda embeddings, up to dg isomorphism.

\begin{ex}\label{ResSF}
Let $\cC$ be a dg category and $\cB$ a full dg subcategory of
$\cC$. Denoting by $\fI\colon\cB\mono\cC$ the inclusion dg functor,
the composition of dg functors
\[
\cC\mor{\dgYon[\cC]}\dgMod{\cC}\mor{\Res(\fI)}\dgMod{\cB}\to\dgMod{\cB}/\Ac(\cB)
\]
yields, in view of \autoref{rmk:pretr} (iii), a natural quasi-functor $\cC\to\SF{\cB}$.
\end{ex}

Let us give now the key definition for this paper.

\begin{definition}\label{def:enhancement}
A \emph{dg enhancement} (or simply an \emph{enhancement}) of a triangulated category $\cT$ is a pair
$(\cC,\fE)$, where $\cC$ is a pretriangulated
dg category and $\fE\colon\Ho(\cC)\to\cT$ is an exact
equivalence.
\end{definition}

A priori, one may have `different' enhancements for the same triangulated category. To make this precise, we need the following.

\begin{definition}\label{def:uniqueenh}
A triangulated category $\cT$
\emph{has a unique enhancement} if, given two enhancements $(\cC,\fE)$ and $(\cC',\fE')$ of $\cT$,
there exists a quasi-functor $\fF\colon\cC\to\cC'$ such that
$\Ho(\fF)$ is an exact equivalence.
\end{definition}

A concise way to say that a triangulated category $\cT$ has a unique enhancement is to say that, for any two  enhancements $(\cC,\fE)$ and $(\cC',\fE')$ of $\cT$, the dg categories $\cC$ and $\cC'$ are isomorphic in $\Hqe$. It is clear that the notion of uniqueness of dg enhancements forgets about part of the data in the definition of enhancement. In particular, the equivalence $\fE$ does not play a role. So, by abuse of notation, we will often simply say that $\cC$ is an enhancement of $\cT$ if there exists an exact equivalence $\Ho(\cC)\iso\cT$.

Nevertheless, there are stronger versions of the notion of uniqueness of dg enhancements. Indeed, we say that $\cT$ \emph{has a strongly unique} (respectively, \emph{semi-strongly unique}) \emph{enhancement} if moreover $\fF$ can be chosen so that there is an isomorphism of exact functors $\fE\iso\fE'\comp\Ho(\fF)$ (respectively, there is an isomorphism $\fE(C)\iso\fE'(\Ho(\fF)(C))$ in $\cT$, for every $C\in\cC$).

\begin{ex}\label{ex:derenh}
(i) If $\cC$ is a dg category, $\SF{\cC}$
is an enhancement of $\dgD(\cC)$.

(ii) Let $\cC$ be a pretriangulated dg category and let $\cB$ be a full pretriangulated dg subcategory of $\cC$. We mentioned already that, by the main result of \cite{Dr}, we have a natural exact equivalence between the Verdier quotient $\Ho(\cC)/\Ho(\cB)$ and $\Ho(\cC/\cB)$. Hence $\cC/\cB$, with the above equivalence, is an enhancement of $\Ho(\cC)/\Ho(\cB)$.
\end{ex}

\subsection{Dg enhancements for well generated triangulated categories}\label{subsect:dgwell}

If $\cC$ is a small dg category such that $\Ho(\cC)$ has
$\alpha$-coproducts, we denote by $\dgD_\alpha(\cC)$ the
\emph{$\alpha$-continuous derived category} of $\cC$, which is
defined as the full subcategory of $\dgD(\cC)$ with objects those $M\in\dgMod{\cC}$ such that the natural map
\[
(H^*(M))\left(\Plus_{i\in I}C_i\right)\longrightarrow\prod_{i\in I}(H^*(M))(C_i)
\]
(where the coproduct is intended in $\Ho(\cC)$) is an isomorphism, for all objects $C_i\in\cC$, with $\card{I}<\alpha$.
It is useful to know that $\dgD_\alpha(\cC)$ is also equivalent
to a quotient of $\dgD(\cC)$. More precisely, there is a localizing subcategory $\cN$ of
$\dgD(\cC)$ such that the quotient functor $\dgD(\cC)\to\dgD(\cC)/\cN$
restricts to an exact equivalence $\dgD_{\alpha}(\cC)\to\dgD(\cC)/\cN$ (see
\cite[Sect.\ 6]{P} for details). By \cite[Theorem 6.4]{P}, the triangulated category $\dgD_{\alpha}(\cC)$ is $\alpha$-compactly generated.

\begin{remark}\label{rmk:alphacont}
The triangulated category $\dgD_\alpha(\cC)$ has an obvious enhancement $\SFa{\cC}$ given as the full dg subcategory of $\SF{\cC}$ whose objects correspond to those in  $\dgD_\alpha(\cC)$, under the equivalence $\Ho(\SF{\cC})\iso\dgD(\cC)$ (see \autoref{rmk:pretr} (iii)). On the other hand, in a similar way, there is a an enhancement $\cN'$ of $\cN$ and, by \autoref{ex:derenh} (ii), the composition of dg functors
\[
\SFa{\cC}\mono\SF{\cC}\to\SF{\cC}/\cN'
\]
is a quasi-equivalence inducing the exact equivalence $\dgD_{\alpha}(\cC)\to\dgD(\cC)/\cN$. It follows that there is a natural quasi-functor $\SF{\cC}\to\SFa{\cC}$.
\end{remark}

The essential step in the proof of \cite[Theorem 7.2]{P} can be
reformulated (with a slight variant) as follows.

\begin{thm}\label{Porta}
Let $\cC$ be a pretriangulated dg category such that $\Ho(\cC)$ is well generated and let $\cB_0$ be a small set of objects in $\cC$. Then there exist a regular cardinal $\alpha$ and a small and full dg subcategory $\cB$ of $\cC$ containing $\cB_0$ such that $\Ho(\cB)$ is closed under $\alpha$-coproducts and the natural quasi-functor $\cC\to\SF{\cB}$ (see \autoref{ResSF}) induces an exact equivalence $\fPo\colon\Ho(\cC)\to\dgD_\alpha(\cB)$.
\end{thm}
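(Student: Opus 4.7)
The plan is to take $\alpha$ large enough that $\cB_0$ consists of $\alpha$-compact objects, and to let $\cB$ be a small dg enhancement of the whole essentially small category $\Ho(\cC)^\alpha$ sitting inside $\cC$. The statement then reduces to the principle, essentially due to Neeman and Porta, that any $\alpha$-well generated triangulated category with a dg enhancement of its $\alpha$-compact core is equivalent to the $\alpha$-continuous derived category of that core.

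I would begin by choosing, via \autoref{rmk:genalpha}(ii), a regular cardinal $\alpha$ for which $\Ho(\cC)$ is $\alpha$-well generated and $\cB_0\subseteq\Ho(\cC)^\alpha$; this is possible because $\cB_0$ is small. Since $\Ho(\cC)^\alpha$ is essentially small, I would pick a small set of objects of $\cC$ representing all isomorphism classes of $\Ho(\cC)^\alpha$ and containing $\cB_0$, and define $\cB$ as the full dg subcategory of $\cC$ spanned by this set. By construction, $\Ho(\cB)\iso\Ho(\cC)^\alpha$ is $\alpha$-localizing in $\Ho(\cC)$, so $\Ho(\cB)$ is in particular closed under $\alpha$-coproducts. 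Composing the natural quasi-functor $\cC\to\SF{\cB}$ from \autoref{ResSF} with the quasi-functor $\SF{\cB}\to\SFa{\cB}$ from \autoref{rmk:alphacont} yields a quasi-functor $\cC\to\SFa{\cB}$, which induces the exact functor $\fPo\colon\Ho(\cC)\to\dgD_\alpha(\cB)$; restricted to $\cB$, $\fPo$ is the dg Yoneda embedding (representables on $\cB$ are automatically $\alpha$-continuous by the universal property of $\alpha$-coproducts in $\Ho(\cB)$).

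The remaining task is to verify that $\fPo$ is an equivalence. Coproduct-preservation of $\fPo$ is built into its definition through $\SFa{\cB}$, and \cite[Theorem 6.4]{P} identifies the images $\cB(-,B)$ with a set of $\alpha$-compact generators of $\dgD_\alpha(\cB)$. Essential surjectivity is then formal: the essential image of $\fPo$ is triangulated, closed under small coproducts (hence under direct summands, by Neeman's splitting of idempotents in categories with countable coproducts) and contains all the generators, so by \autoref{prop:PortaG1} it is the whole target. For full faithfulness, I would first verify it on $\cB\times\cB$, where $\fPo$ reduces to the (quasi-)Yoneda embedding. Extending to $\cC\times\cC$ is the subtle step: for fixed $B\in\cB$, the functor $X\mapsto\Ho(\cC)(B,X[*])$ is cohomological and commutes with $\alpha$-coproducts (since $B\in\Ho(\cC)^\alpha$), and $\dgD_\alpha(\cB)(\fPo(B),\fPo(X)[*])$ has the analogous property by construction of $\dgD_\alpha(\cB)$; this is where property (G2) for $\Ho(\cC)$, combined with an $\alpha$-Brown-representability-type argument, permits one to extend equality from $X\in\cB$ to all $X\in\cC$, and then a symmetric argument in the first variable completes the proof.

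The main obstacle I expect is precisely this last extension of full faithfulness beyond $\cB$. It depends crucially on the fact that $\fPo$ lands in the $\alpha$-continuous derived category rather than the full $\dgD(\cB)$, so that the universal property of $\dgD_\alpha(\cB)$ can be invoked; this is the structural reason for passing through $\SFa{\cB}$ and is the technical heart of Porta's original argument for \cite[Theorem 7.2]{P}, of which the present statement is a mild reformulation tailored to capture the preassigned objects $\cB_0$ inside $\cB$.
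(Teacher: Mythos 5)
Your proposal follows exactly the paper's route: choose $\alpha$ (via \autoref{rmk:genalpha}(ii)) large enough that $\cB_0\subseteq\Ho(\cC)^\alpha$, take $\cB\subseteq\cC$ a small full dg subcategory with $\Ho(\cB)\simeq\Ho(\cC)^\alpha$, and invoke the argument of Porta's Theorem 7.2, which is precisely what the paper does by direct citation plus the enlargement-of-$\alpha$ observation. Your expanded sketch of how Porta's argument establishes essential surjectivity and full faithfulness is the same proof unpacked (and you correctly flag full faithfulness, via the passage through $\SFa{\cB}$ rather than $\SF{\cB}$, as the technical heart deferred to \cite{P}).
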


\begin{proof}
It is shown in the proof of \cite[Theorem 7.2]{P} that all the required properties are satisfied, except possibly $\cB_0\subseteq\cB$, taking $\alpha$ such that $\Ho(\cC)$ is $\alpha$-compactly generated and $\cB$ such that $\Ho(\cB)\subseteq\Ho(\cC)^{\alpha}$ is a small subcategory and this inclusion is an equivalence. The conclusion then follows from \autoref{rmk:genalpha} (ii).
\end{proof}

\section{Some abstract results about exact functors}\label{sect:abstract}

In this section, we go back to the triangulated setting and prove some abstract results about exact functors which will be crucial in the rest of the paper.  This should be thought of as a rather technical but essential interlude towards the proof of \autoref{thm:crit1}. 

\smallskip

Let $\cA$ be a small category which we see here as a dg category sitting all in degree $0$ (see \autoref{ex:dgcat1} (i)). With a slight abuse of notation, we will identify $\dgD(\cA)$ with the homotopy category $\Ho(\SF{\cA})$ (see \autoref{rmk:pretr} (iii)). 

The fact that $\cA$ is in degree $0$ implies that an object of $\dgMod{\cA}$ can be regarded as a complex
\[
C=\{\cdots\to C^{j-1}\mor{d^{j-1}}C^j\mor{d^j}C^{j+1}\to\cdots\}
\]
in the abelian category $\Mod{\cA}$ of ($\K$-linear) functors $\cA\opp\to\Mod{\K}$ (where $\Mod{\K}$ is the abelian category of $\K$-modules).  Moreover, there is a natural exact equivalence $\dgD(\cA)\iso\D(\Mod{\cA})$ (see, for example, the beginning of \cite[Section 7]{LO} for a brief discussion about these issues). Under this natural identification, $\Ho(\dgYon)\colon\cA=\Ho(\cA)\to\dgD(\cA)$ is actually the composition of the usual Yoneda functor $\Yon\colon\cA\to\Mod{\cA}$ with the natural inclusion $\Mod{\cA}\hookrightarrow\D(\Mod{\cA})\iso\dgD(\cA)$. Therefore, for sake of simplicity, we denote by
\[
\Yon\colon\cA\to\dgD(\cA)
\]
the functor $\Ho(\dgYon)$.

As a consequence of the discussion above, it makes sense to say whether $C\in\dgMod{\cA}$ is bounded (above or below), and for every integer $n$ we can define the \emph{stupid truncations}
$\sigma_{\leq n}(C)$, $\sigma_{\geq n}(C)$ as
\begin{gather*}
\sigma_{\leq n}(C):=\{\cdots\to
C^j\mor{d^j}C^{j+1}\to\cdots\to C^{n-1}\mor{d^{n-1}}C^n \to0\to\cdots\}\\
\sigma_{\geq n}(C):=\{\cdots\to 0\to C^n\mor{d^{n}}C^{n+1}\to\cdots\to C^j\mor{d^j}C^{j+1}\to\cdots\}.
\end{gather*}
It is easy to see that, if $C\in\SF{\cA}$, then each $C^j$ is a free $\cA$-module and $\sigma_{\leq n}(C),\sigma_{\geq n}(C)\in\SF{\cA}$.

\begin{definition}\label{abc}
Let $\cT$ be a triangulated category with small coproducts. An exact functor $\fF\colon\dgD(\cA)\to\cT$ is \emph{right vanishing} if it preserves small coproducts and there exists a full subcategory $\cR$ of $\cT$ with the following properties:
\begin{enumerate}
\item[(R1)]\label{Rcoprod} $\cR$ is closed under small coproducts;
\item[(R2)]\label{Rext} $\cR$ is closed under extensions (meaning that, if $X\to Y\to Z$ is a distinguished triangle in $\cT$ with $X,Z\in\cR$, then $Y\in\cR$, as well); 
\item[(R3)]\label{Rrep} $\fF(\Yon(A))[k]\in\cR$ for every $A\in\cA$ and every integer $k<0$;
\item[(R4)]\label{Rort} $\cT\left(\fF(\Yon(A)),R\right)=0$ for every $A\in\cA$ and every $R\in\cR$.
\end{enumerate}
\end{definition}

\begin{ex}\label{ex:rightvancompact}
Let $\cT$ be a triangulated category with small coproducts and let $\fF\colon\dgD(\cA)\to\cT$ be an exact functor that preserves small coproducts. Assume further that $\fF(\Yon(A))\in\cT^c$, for every $A\in\cA$, and that
\[
\cT\left(\fF(\Yon(A)),\fF(\Yon(A'))[k]\right)=0,
\]
for every $A,A'\in\cA$ and every $k<0$. Then $\fF$ is right vanishing. Indeed, we can take $\cR$ to be the full subcategory of $\cT$  consisting of those $R$ such that $\cT\left(\fF(\Yon(A)),R\right)=0$ for every $A\in\cA$.
\end{ex}

Now we can prove the following results, which should be compared to Lemma 3.2, Corollary 3.3 and Proposition 3.4 in \cite{LO}.

\begin{lem}\label{prop:f1}
Let $\cT$ be a triangulated category with small coproducts and let $\fF\colon\dgD(\cA)\to\cT$ be a right vanishing functor. Then, given $A\in\cA$ and $C\in\dgD(\cA)$,
\[
\cT\left(\fF(\Yon(A)),\fF(\sigma_{\geq n}(C))[k]\right)=0
\]
for all integers $k<n-1$, and also for $k=n-1$ if $C$ is bounded above.
\end{lem}

\begin{proof}
Let $\cR$ be a full subcategory of $\cT$ as in \autoref{abc}. If $C\in\SF{\cA}$ has filtration $\{C_j\}$, the induced filtration $\{C_j'\}$ of $\sigma_{\geq n}(C)$ has clearly the property that each quotient $C'_{j}/C'_{j-1}$ is isomorphic to a small coproduct of objects of the form $\Yon(A)[s]$, for $A\in\cA$ and $s\leq-n$. As $\fF$ preserves small coproducts and $\cR$ satisfies (R1) and (R3), it follows immediately that $\fF(C'_j/C'_{j-1})[k]\in\cR$ for $k<n$. Since $\cR$ satisfies (R2), from the distinguished triangle $C'_{j-1}\to C'_j\to C'_j/C'_{j-1}$ of $\dgD(\cA)$ we deduce by induction on $j$ that
\begin{equation}\label{filtvan}
\fF(C'_j)[k]\in\cR\qquad\text{for $j\ge0$ and $k<n$.}
\end{equation}
Now we use the fact that $\sigma_{\geq n}(C)\iso\hocolim(C'_j)$. Recall that, if we denote by $s_j\colon C'_j\to C'_{j+1}$ the inclusion morphisms, then $\hocolim(C'_j)$ is, by definition, the cone in $\dgD(\cA)$ of the morphism
\[
\sum_{j\geq 0}(\id_{C'_j}-s_j)\colon\Plus_{j\geq 0}C_j'\longrightarrow\Plus_{j\geq 0}C_j'.
\]
As $\fF$ preserves small coproducts, we have an isomorphism in $\cT$
\[
\fF(\sigma_{\geq n}(C))\iso\fF(\hocolim(C'_j))\iso\hocolim\fF(C'_j),
\]
hence a distinguished triangle
\[
\Plus_{j\ge0}\fF(C'_j)\longrightarrow\Plus_{j\ge0}\fF(C'_j)\longrightarrow\fF(\sigma_{\geq n}(C)).
\]
Using \eqref{filtvan} and, again, the fact that $\cR$ satisfies (R1) and (R2), we obtain that $\fF(\sigma_{\geq n}(C))[k]\in\cR$ for $k<n-1$.

If $C$ is bounded above, we can take $C_j=\sigma_{\ge t-j}(C)$ for some integer $t$. Then $\sigma_{\geq n}(C)=C_{t-n}=C'_{t-n}$ (meaning $0$ if $t-n<0$), whence
also $\fF(\sigma_{\geq n}(C))[n-1]\in\cR$ by \eqref{filtvan}.

In both cases, the thesis then follows immediately from the fact that $\cR$ satisfies (R4).
\end{proof}

\begin{cor}\label{cor:f2}
Let $\cT$ be a triangulated category with small coproducts and let $\fF\colon\dgD(\cA)\to\cT$ be a right vanishing functor. Then, given $A\in\cA$ and $C\in\dgD(\cA)$, the map (induced by the natural morphism $C\to\sigma_{\leq m}(C)$)
\[
\cT(\fF(\Yon(A)),\fF(C))\longrightarrow\cT(\fF(\Yon(A)),\fF(\sigma_{\leq m}(C)))
\]
is injective for every integer $m>0$, and also for $m=0$ if $C$ is bounded above. Moreover, the map is an isomorphism for $m>1$, and also for $m=1$ if $C$ is bounded above.
\end{cor}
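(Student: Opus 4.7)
The plan is to extract this corollary from \autoref{prop:f1} by feeding the stupid truncation distinguished triangle to the cohomological functor $\cT(\fF(\Yon(A)),\farg)$.

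First I would observe that for any $C\in\SF{\cA}$ and any integer $m$, the obvious componentwise inclusion $\sigma_{\geq m+1}(C)\hookrightarrow C$ and projection $C\to\sigma_{\leq m}(C)$ fit into a strict short exact sequence of semi-free dg $\cA$-modules
\[
0\to\sigma_{\geq m+1}(C)\to C\to\sigma_{\leq m}(C)\to0,
\]
hence yield a distinguished triangle $\sigma_{\geq m+1}(C)\to C\to\sigma_{\leq m}(C)\to\sigma_{\geq m+1}(C)[1]$ in $\dgD(\cA)$. Since $\fF$ is exact, applying it gives a distinguished triangle in $\cT$, and then applying the cohomological functor $\cT(\fF(\Yon(A)),\farg)$ produces a long exact sequence in which the map of the statement,
\[
\cT(\fF(\Yon(A)),\fF(C))\longrightarrow\cT(\fF(\Yon(A)),\fF(\sigma_{\leq m}(C))),
\]
appears with kernel a quotient of $\cT(\fF(\Yon(A)),\fF(\sigma_{\geq m+1}(C)))$ and cokernel a subgroup of $\cT(\fF(\Yon(A)),\fF(\sigma_{\geq m+1}(C))[1])$.

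Next I would simply read off the required vanishing statements from \autoref{prop:f1} applied with $n=m+1$. For injectivity I need $\cT(\fF(\Yon(A)),\fF(\sigma_{\geq m+1}(C)))=0$, i.e.\ the case $k=0$ of \eqref{truncvan}, which holds when $0<n-1=m$, and also when $0=n-1$ (that is, $m=0$) under the extra hypothesis that $C$ is bounded above. For surjectivity I additionally need $\cT(\fF(\Yon(A)),\fF(\sigma_{\geq m+1}(C))[1])=0$, the case $k=1$, which holds when $1<m$, and also when $1=m$ in the bounded-above case. Putting these together gives the four assertions of the corollary.

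I do not anticipate any real obstacle: the only subtle point is making sure that the stupid truncations of a semi-free module are again objects of $\SF{\cA}$ so that the triangle really lives in $\dgD(\cA)$ and \autoref{prop:f1} is applicable to $\sigma_{\geq m+1}(C)$, but this was already recorded in the discussion preceding \autoref{prop:f1}. Everything else is a purely formal bookkeeping of indices in the long exact sequence.
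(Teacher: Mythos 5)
Your proof is correct and follows essentially the same route as the paper: both apply the cohomological functor $\cT(\fF(\Yon(A)),\fF(\farg))$ to the stupid truncation triangle $\sigma_{\geq m+1}(C)\to C\to\sigma_{\leq m}(C)$ and then read off the required vanishing from \autoref{prop:f1} with $n=m+1$, $k=0$ and $k=1$. The only difference is that you spell out slightly more explicitly the long exact sequence and the observation that stupid truncations preserve semi-freeness, which the paper takes for granted.
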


\begin{proof}
For every integer $m$ we have a distinguished triangle
\[
\sigma_{\geq m+1}(C)\longrightarrow C\longrightarrow\sigma_{\leq m}(C)
\]
in $\dgD(\cA)$. It is then enough to apply the cohomological functor $\cT(\fF(\Yon(A)),\fF(\farg))$ to it, taking into account that, by \autoref{prop:f1},
\[
\cT(\fF(\Yon(A)),\fF(\sigma_{\geq m+1}(C)))=0
\]
for $m>0$ (also for $m=0$ if $C$ is bounded above) and
\[
\cT(\fF(\Yon(A)),\fF(\sigma_{\geq m+1}(C))[1])=0
\]
for $m>1$ (also for $m=1$ if $C$ is bounded above).
\end{proof}

\begin{prop}\label{prop:f3}
Let $\cT$ be a triangulated category with small coproducts and let $\fF_1,\fF_2\colon\dgD(\cA)\to\cT$ be right vanishing functors. Assume moreover that there is an isomorphism of functors $\theta\colon\fF_1\comp\Yon\to\fF_2\comp\Yon$. Then, for every $C\in\dgD(\cA)$ bounded above, there exists an isomorphism $\theta'_C:\fF_1(C)\to\fF_2(C)$ such that, for every $A\in\cA$, every $k\in\ZZ$ and every $f\in\dgD(\cA)(\Yon(A)[k],C)$, the diagram
\begin{equation}\label{eqn:quad}
\xymatrix{
\fF_1(\Yon(A)[k])\ar[rr]^{\fF_1(f)}\ar[d]_{\theta_A[k]}&& \fF_1(C)\ar[d]^{\theta'_C}\\
\fF_2(\Yon(A)[k])\ar[rr]^{\fF_2(f)}&& \fF_2(C)
}
\end{equation}
commutes in $\cT$.
\end{prop}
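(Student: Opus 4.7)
The plan is to construct $\theta'_C$ by induction on the structure of a bounded-above semi-free model for $C$, obtained via the equivalence $\dgD(\cA)\iso\Ho(\SF{\cA})$ of \autoref{rmk:pretr}~(iii); thus each term $C^j$ is a small coproduct of shifts of representables and $C^j=0$ for $j>m_0$.

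For the base case, when $C=\sigma_{\geq m_0}(C)\iso\Plus_i\Yon(A_i)[-m_0]$ is a shifted free module, I would set $\theta'_C:=\Plus_i\theta_{A_i}[-m_0]$, which is well defined since $\fF_1$ and $\fF_2$ preserve small coproducts. Commutativity of \eqref{eqn:quad} reduces, via Yoneda, to the naturality of $\theta$ in the case where the shifts match, while both sides of the square vanish by \eqref{coprodvan} (transported through $\theta$ on one side) in the mismatched case. For the inductive step I would use the stupid-truncation distinguished triangle $C^{n-1}[-n]\to\sigma_{\geq n}(C)\to\sigma_{\geq n-1}(C)\to C^{n-1}[-(n-1)]$ together with axiom TR3: the required commutativity of the square on the connecting morphism $C^{n-1}[-n]\to\sigma_{\geq n}(C)$ reduces componentwise (since $C^{n-1}$ is a coproduct of representables and the $\fF_i$ preserve coproducts) to the instance of \eqref{eqn:quad} for $\sigma_{\geq n}(C)$ against morphisms from $\Yon(A_i)[-n]$, which is the inductive hypothesis; the usual iso-detection in triangulated categories then shows the produced map $\theta'_{\sigma_{\geq n-1}(C)}$ is an isomorphism.

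The verification of \eqref{eqn:quad} at each inductive step proceeds by a diagram chase on the long exact sequence obtained by applying $\cT(\fF_1(\Yon(A)[k]),\farg)$ to the image of the triangle under $\fF_2$: the discrepancy $\theta'_{\sigma_{\geq n-1}(C)}\comp\fF_1(f)-\fF_2(f)\comp\theta_A[k]$ projects to zero in $\cT(\fF_1(\Yon(A)[k]),\fF_2(C^{n-1}[-(n-1)]))$ by the base case, so it lies in the image of $\cT(\fF_1(\Yon(A)[k]),\fF_2(\sigma_{\geq n}(C)))$. The latter group is controlled, after transporting $\fF_1(\Yon(A))$ to $\fF_2(\Yon(A))$ via $\theta_A$, by the vanishing in \autoref{prop:f1} and \autoref{cor:f2}, which allows us either to conclude that the discrepancy is zero outright or to modify the choice of $\theta'_{\sigma_{\geq n-1}(C)}$ within its TR3 ambiguity to kill it.

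Finally, to pass from the bounded truncations to $C$ itself, I would use $C\iso\hocolim_n\sigma_{\geq n}(C)$ along the subcomplex inclusions, observe that $\fF_1$ and $\fF_2$ preserve such sequential hocolims, and take $\theta'_C$ to be the induced morphism of hocolims; it is an isomorphism since each $\theta'_{\sigma_{\geq n}(C)}$ is. Commutativity of \eqref{eqn:quad} for $C$ then follows from the compactness of the representables $\Yon(A)$ in $\dgD(\cA)$, which factors every $f\colon\Yon(A)[k]\to C$ through some $\sigma_{\geq n}(C)$, reducing the statement to the bounded case. I expect the main obstacle to be the control of the TR3 non-uniqueness at each inductive step: the extension $\theta'_{\sigma_{\geq n-1}(C)}$ is defined only up to a suitable Hom group, and arranging it so that \eqref{eqn:quad} is preserved is the essential content, which is precisely where the vanishing from \autoref{prop:f1} and \autoref{cor:f2} does the real work.
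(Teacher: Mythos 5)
Your outline matches the approach the paper relies on: the paper's proof simply defers to \cite[Proposition 3.4]{LO}, which is exactly the descending induction on stupid truncations, TR3, and the vanishing lemmas (their Lemma 3.2 / Corollary 3.3 are \autoref{prop:f1} / \autoref{cor:f2} here), followed by a colimit step. So at the level of strategy you are reproducing the paper's (i.e.\ Lunts--Orlov's) argument. The minor wobble in the base case — invoking \eqref{coprodvan} in the ``mismatched'' shift — is harmless: for $k<-m_0$ the morphism $f$ is already zero in $\dgD(\cA)$, so there is nothing to transport; \eqref{coprodvan} only handles $k>-m_0$.

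The real issue is the step you yourself flag as ``the essential content,'' and as written it does not close. After applying $\fF_2(\pi)$ the discrepancy $D(f)$ lifts to some $D'\in\cT\bigl(\fF_1(\Yon(A)[k]),\fF_2(\sigma_{\geq n}(C))\bigr)$, and after transporting along $\theta_A$ the vanishing in \autoref{prop:f1} (bounded-above case) kills this group only when $k\geq 1-n$. But a nonzero $f\colon\Yon(A)[k]\to\sigma_{\geq n-1}(C)$ exists precisely for $k$ in the range $[-m_0,1-n]$, so the only shift covered by your vanishing argument is the single boundary value $k=1-n$; for $-m_0\leq k\leq -n$ the relevant Hom group need not vanish, and nothing in your chase forces $D'$ (hence $D(f)$) to be zero. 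The alternative you offer --- absorbing $D(f)$ into the TR3 non-uniqueness --- cannot be invoked per~$f$: changing $\theta'_{\sigma_{\geq n-1}(C)}$ alters the discrepancy for \emph{every} $A$, $k$, $f$ simultaneously, so one would need a single correction killing all discrepancies at once, which you have not produced (and in fact the ambiguity group $\cT\bigl(\fF_1(C^{n-1}[-(n-1)]),\fF_2(\sigma_{\geq n}(C))\bigr)$ vanishes by \autoref{prop:f1} together with \eqref{coprodvan}, so the TR3 extension is essentially rigid and there is no freedom to exploit). To finish one must argue that the discrepancy is zero outright for all $k$, which in \cite{LO} is done by a more careful d\'evissage than the long-exact-sequence chase you sketch; this is exactly what the paper means by ``verbatim as \cite[Proposition 3.4]{LO}'' and it is the piece missing here.
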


\begin{proof}
The proof proceeds verbatim as the proof of \cite[Proposition 3.4]{LO}. It should be noted that in \cite{LO} the authors assume further that $\fF_i(\Yon(A))$ is a compact object, for all $A\in\cA$ and $i=1,2$. They need this hypothesis in the proof of \cite[Proposition 3.4]{LO} only to invoke \cite[Corollary 3.3]{LO}. But, under our assumptions, \cite[Corollary 3.3]{LO} can be replaced by \autoref{cor:f2}, for $C\in\SF{\cA}$ bounded above.
\end{proof}

\section{Uniqueness of enhancements: a general criterion}\label{sect:firstcrit}

This section is completely devoted to the proof of \autoref{thm:crit1}. Hence, let $\cA$ be a small category which we see here as a dg category sitting all in degree $0$ and let $\cL$ be a localizing subcategory of $\dgD(\cA)$. We will always assume that
\begin{itemize}
\item[(a)] The quotient $\dgD(\cA)/\cL$ is a well generated triangulated category;
\item[(b)] The quotient functor $\fQ\colon\dgD(\cA)\to\dgD(\cA)/\cL$ is right vanishing;
\end{itemize}
as in the hypotheses of \autoref{thm:crit1}.

\subsection{The quasi-functor}\label{subsect:quasifun1}

Assume that there exists an exact equivalence $\fE\colon\dgD(\cA)/\cL\to\Ho(\cC)$, for some pretriangulated dg category $\cC$. Notice that (a) clearly implies that $\Ho(\cC)$ is also a well generated triangulated category. Consider the composition of functors
\[
\xymatrix{
\fH\colon\cA\ar[r]^-{\Yon}&\dgD(\cA)\ar[r]^-{\fQ}&\dgD(\cA)/\cL\ar[r]^-{\fE}&\Ho(\cC).
}
\]

\begin{remark}\label{gen}
As $\Yon(\cA)$ is a small set of compact generators of $\dgD(\cA)$ (see \cite[Example 1.9]{LO}), it follows from \autoref{inducedgen} that $\fQ\comp\Yon(\cA)$ satisfies (G1) in $\dgD(\cA)/\cL$. Since $\fE$ is an exact equivalence, also $\fH(\cA)$ satisfies (G1) in $\Ho(\cC)$.
\end{remark}

Denoting by $\cB_0$ the full dg subcategory of $\cC$ such that $\Ho(\cB_0)=\fH(\cA)$, we can clearly regard $\fH$ as a functor $\cA\to\Ho(\cB_0)$.

Let $\tau_{\leq 0}(\cB_0)$ be the dg category with the same objects as $\cB_0$ and with
\[
\tau_{\le0}(\cB_0)\left(B_1,B_2\right):=\tau_{\le0}\left(\cB_0\left(B_1,B_2\right)\right)
\]
for every $B_1$ and $B_2$ in $\cB_0$. Here, for a complex of $\K$-modules (or, more generally, of objects in an abelian category)
\[
C=\{\cdots\to C^{j-1}\mor{d^{j-1}}C^j\mor{d^j}C^{j+1}\to\cdots\},
\]
and for every integer $n$, we define
\begin{gather*}
\tau_{\le n}(C):=\{\cdots\to
C^j\mor{d^j}C^{j+1}\to\cdots\to C^{n-1}\to\ker d^n\to0\to\cdots\},\\
\tau_{\ge n}(C):=\{\cdots\to0\to\cok d^{n-1}\to C^{n+1}\to\cdots\to
C^j\mor{d^j}C^{j+1}\to\cdots\}.
\end{gather*}
There are obvious dg functors $\tau_{\le0}(\cB_0)\to\Ho(\cB_0)$ and $\tau_{\le0}(\cB_0)\to\cB_0$, and the former is a quasi-equivalence thanks to (b) (taking into account that $\fE$ is an exact equivalence). Thus we obtain a quasi-functor $\Ho(\cB_0)\to\cB_0$.

By \autoref{Porta} there exist a regular cardinal $\alpha$ and a small and full dg subcategory $\cB$ of $\cC$ containing $\cB_0$ such that $\Ho(\cB)$ is closed under $\alpha$-coproducts and the natural quasi-functor $\cC\to\SF{\cB}$ induces an exact equivalence $\fPo\colon\Ho(\cC)\to\dgD_\alpha(\cB)$.

If we compose $\fH\colon\cA\to\Ho(\cB_0)$ with the quasi-functor $\Ho(\cB_0)\to\cB_0$ and the natural inclusion $\cB_0\mono\cB$, we get a quasi-functor $\fH'\colon\cA\to\cB$. From it we finally obtain a quasi-functor
\[
\xymatrix{
\fG_1\colon\SF{\cA}\ar[rr]^-{\Ind(\fH')}&&\SF{\cB}\ar[rr]^-{\fQ'}&&\SFa{\cB},
}
\]
where $\fQ'$ denotes the natural quasi-functor described in \autoref{rmk:alphacont}. By passing to the homotopy categories, we have also the exact functor
\[
\fF_1:=\Ho(\fG_1)\colon\dgD(\cA)\longrightarrow\dgD_\alpha(\cB).
\]
On the other hand, we can proceed differently and take the exact functor
\[
\xymatrix{
\fF_2\colon\dgD(\cA)\ar[r]^-{\fQ}&\dgD(\cA)/\cL\ar[r]^-{\fE}&\Ho(\cC)\ar[r]^-{\fPo}&\dgD_\alpha(\cB).
}
\]

The following results will be used later.

\begin{lem}\label{lem:C12}
The functors $\fF_1$ and $\fF_2$ satisfy the assumptions of \autoref{prop:f3}.
\end{lem}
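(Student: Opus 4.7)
The plan is to verify the four hypotheses of \autoref{prop:f3}: that the target $\cT=\dgD_{\alpha}(\cB)$ has small coproducts, that $\fF_1$ and $\fF_2$ preserve them, that there is a natural isomorphism $\theta\colon\fF_1\comp\Yon\isomor\fF_2\comp\Yon$, and that both functors satisfy the vanishing \eqref{coprodvan}. The first point is free: by \cite[Theorem 6.4]{P}, $\dgD_{\alpha}(\cB)$ is $\alpha$-compactly generated and in particular has small coproducts.

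Preservation of small coproducts is checked piece by piece. For $\fF_2=\fPo\comp\fE\comp\fQ$, the quotient functor $\fQ$ commutes with small coproducts by \cite[Corollary 3.2.11]{N2} (as already recalled in \autoref{rmk:exadj}), while $\fE$ and $\fPo$ are exact equivalences. For $\fF_1=\Ho(\fG_1)$, I would use that $\Ind(\fH')$ sends $\dgYon[\cA](A)[m]$ to $\dgYon[\cB](\fH'(A))[m]$ up to dg isomorphism and, being a left Kan extension along a dg functor, preserves small coproducts of semi-free modules; moreover, the natural quasi-functor $\SF{\cB}\to\SFa{\cB}$ of \autoref{rmk:alphacont} corresponds on homotopy categories to the Verdier quotient $\dgD(\cB)\to\dgD(\cB)/\cN\iso\dgD_{\alpha}(\cB)$, which again preserves small coproducts by the same corollary of Neeman.

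Next, I would construct $\theta$. Unwinding the definition of $\fG_1$ and using that $\Ind(\fH')$ commutes with the Yoneda embeddings up to dg isomorphism, one gets $\fF_1\comp\Yon\iso\fT\comp\Ho(\fH')$, where $\fT\colon\Ho(\cB)\to\dgD_{\alpha}(\cB)$ denotes the functor induced by the composition $\cB\mono\SF{\cB}\to\SFa{\cB}$. By the construction in \autoref{subsect:quasifun1}, the quasi-functor $\fH'\colon\cA\to\cB$ is built from $\fH$ via the quasi-equivalence $\tau_{\le0}(\cB_0)\to\Ho(\cB_0)$ followed by $\tau_{\le0}(\cB_0)\to\cB_0\mono\cB$; on homotopy categories the first step induces the identity on $\Ho(\cB_0)$, so $\Ho(\fH')$ is naturally isomorphic to $\fH$ (composed with the inclusion $\Ho(\cB_0)\hookrightarrow\Ho(\cB)$). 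On the other side, unravelling the definition of $\fPo$ in \autoref{Porta} via \autoref{ResSF} shows that the restriction of $\fPo$ to the full subcategory $\Ho(\cB)\subseteq\Ho(\cC)$ is naturally isomorphic to $\fT$; thus $\fF_2\comp\Yon=\fPo\comp\fH\iso\fT\comp\fH$. Combining the two chains of isomorphisms produces $\theta$.

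Finally, the vanishing \eqref{coprodvan} for $\fF_2$ follows at once from (b): by the previous paragraph,
\[
\dgD_{\alpha}(\cB)\bigl(\fF_2(\Yon(A)),\coprod_{i\in I}\fF_2(\Yon(A_i))[k_i]\bigr)\iso\dgD(\cA)/\cL\bigl(\fQ(\Yon(A)),\coprod_{i\in I}\fQ(\Yon(A_i))[k_i]\bigr),
\]
which vanishes for $k_i<0$, and the vanishing for $\fF_1$ is transported across $\theta$. The main obstacle I anticipate is the construction of $\theta$: identifying $\Ho(\fH')$ with $\fH$ and $\fPo\rest{\Ho(\cB)}$ with $\fT$ requires carefully tracking the natural isomorphisms hidden behind the phrase ``$\Ind$ commutes with Yoneda'' in \autoref{subsect:dgdefinition}, together with the equivalence $\Ho(\SF{\cB})\iso\dgD(\cB)$ of \autoref{rmk:pretr} (iii) and the explicit description of the quasi-functor $\cC\to\SF{\cB}$ underlying $\fPo$.
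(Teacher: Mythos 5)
Your proposal is correct and follows essentially the same route as the paper: verify that both functors preserve small coproducts (via $\fQ$ for $\fF_2$, via $\Ind(\fH')$ and the quotient for $\fF_1$), establish the isomorphism $\fF_1\comp\Yon\iso\fF_2\comp\Yon$, derive \eqref{coprodvan} for $\fF_2$ from hypothesis (b), and transport it to $\fF_1$. The paper's proof is much terser (it treats the existence of the isomorphism $\theta$ and the preservation of coproducts by $\fF_1$ as ``clear from the definitions''), whereas you spell out the chain of identifications behind $\theta$; this is supplementary detail, not a different argument.
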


\begin{proof}
It is obvious that $\fF_2$ commutes with small coproducts if and only if $\fQ$ does. But this last fact was already observed in \autoref{rmk:exadj}. On the other hand, the exact functor $\Ho(\Ind(\fH'))$ preserves small coproducts because $\Ind(\fH')$ is a left adjoint. By \autoref{rmk:alphacont} and \autoref{rmk:exadj}, the exact functor $\Ho(\fQ')$ (and thus $\fF_1$) preserves small coproducts as well. Moreover, $\fF_2$ is right vanishing because $\fQ$ is right vanishing by (b). Finally, it is clear by construction that $\fF_1\comp\Yon\iso\fF_2\comp\Yon$, and this obviously implies that also $\fF_1$ is right vanishing.
\end{proof}

\begin{cor}\label{F1G1}
The set $\fF_1\comp\Yon(\cA)$ satisfies {\rm (G1)} in $\dgD_\alpha(\cB)$.
\end{cor}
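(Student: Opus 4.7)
The plan is to transport property (G1) from $\Ho(\cC)$ to $\dgD_\alpha(\cB)$ along the exact equivalence $\fPo$, after identifying $\fF_1\comp\Yon$ with $\fPo\comp\fH$ up to isomorphism.

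First I would unwind the definition of $\fF_2$. By construction,
\[
\fF_2\comp\Yon \;=\; \fPo\comp\fE\comp\fQ\comp\Yon \;=\; \fPo\comp\fH,
\]
so the essential image of $\fF_2\comp\Yon$ coincides with $\fPo(\fH(\cA))$. Next, from the proof of \autoref{lem:C12} we already have an isomorphism of functors $\fF_1\comp\Yon\iso\fF_2\comp\Yon$, hence $\fF_1\comp\Yon(\cA)$ and $\fPo(\fH(\cA))$ coincide (up to isomorphism) as subsets of $\dgD_\alpha(\cB)$. So it suffices to show that $\fPo(\fH(\cA))$ satisfies (G1) in $\dgD_\alpha(\cB)$.

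For this, I would invoke \autoref{gen}, which tells us that $\fH(\cA)$ satisfies (G1) in $\Ho(\cC)$. Since $\fPo\colon\Ho(\cC)\to\dgD_\alpha(\cB)$ is an exact equivalence of triangulated categories, property (G1) — being phrased entirely in terms of vanishing of Hom-spaces and shifts — is preserved: for any $X\in\dgD_\alpha(\cB)$ we may choose $X'\in\Ho(\cC)$ with $\fPo(X')\iso X$, and then
\[
\dgD_\alpha(\cB)\bigl(\fPo(\fH(A)),X[j]\bigr)\iso\Ho(\cC)\bigl(\fH(A),X'[j]\bigr)
\]
for every $A\in\cA$ and every $j\in\ZZ$. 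Thus all these Hom-spaces vanish if and only if $X'\iso0$, which happens if and only if $X\iso 0$. This gives (G1) for $\fPo(\fH(\cA))$, and hence for $\fF_1\comp\Yon(\cA)$.

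There is no real obstacle here: the only thing to notice is that the isomorphism $\fF_1\comp\Yon\iso\fF_2\comp\Yon$ is built into the construction of $\fF_1$ (via the quasi-equivalence $\tau_{\le0}(\cB_0)\to\Ho(\cB_0)$ provided by hypothesis (b) and the compatibility of $\Ind$ with the Yoneda embedding), so the corollary is a purely formal consequence of \autoref{gen}, \autoref{lem:C12}, and the fact that $\fPo$ is an equivalence.
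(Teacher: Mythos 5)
Your proof is correct and follows exactly the paper's own argument: reduce to $\fF_2\comp\Yon=\fPo\comp\fH$ via the isomorphism $\fF_1\comp\Yon\iso\fF_2\comp\Yon$, then transport (G1) along the exact equivalence $\fPo$ using \autoref{gen}. You simply spell out the Hom-space computation that the paper leaves implicit.
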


\begin{proof}
As $\fF_1\comp\Yon\iso\fF_2\comp\Yon$, it is enough to show that $\fF_2\comp\Yon(\cA)$ satisfies (G1) in $\dgD_\alpha(\cB)$. Since $\fF_2\comp\Yon=\fPo\comp\fH$ and $\fPo$ is an exact equivalence, this follows from \autoref{gen}.
\end{proof}

\subsection{The proof of \autoref{thm:crit1}}\label{subsect:proofcrit1}

Let $\cC$ and $\fE\colon\dgD(\cA)/\cL\to\Ho(\cC)$ be as in \autoref{subsect:quasifun1}.
Denote by $\cL'$ the full dg subcategory of $\SF{\cA}$ such that $\Ho(\cL')\iso\cL$ under the equivalence $\Ho(\SF{\cA})\iso\dgD(\cA)$.

\begin{lem}\label{lem:fact1}
The quasi-functor $\fG_1$ factors through the quotient dg functor $\SF{\cA}\to\SF{\cA}/\cL'$.
\end{lem}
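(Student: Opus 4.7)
The plan is to invoke Drinfeld's universal property of the dg quotient: in $\Hqe$ the quasi-functor $\fG_1$ factors through $\SF{\cA} \to \SF{\cA}/\cL'$ if and only if $\fF_1 = \Ho(\fG_1)$ sends every object of $\cL = \Ho(\cL')$ to zero in $\dgD_\alpha(\cB)$. By \autoref{F1G1}, the set $\{\fF_1(\Yon(A)) : A \in \cA\}$ satisfies (G1) in $\dgD_\alpha(\cB)$; hence it suffices, for each $X \in \cL$, to prove that
\[
\dgD_\alpha(\cB)(\fF_1(\Yon(A)), \fF_1(X)[k]) = 0
\]
for every $A \in \cA$ and every $k \in \ZZ$. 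The key idea is to compare with the analogous Hom group computed through $\fF_2 = \fPo \comp \fE \comp \fQ$, which is automatically zero since $\fQ(X) = 0$ for $X \in \cL$.

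I would represent $X$ by an object of $\cL' \subset \SF{\cA}$ and, for an integer $n$ to be chosen later, consider the distinguished triangle $\sigma_{\ge n+1}(X) \to X \to \sigma_{\le n}(X)$ in $\dgD(\cA)$. By \autoref{lem:C12} both $\fF_1$ and $\fF_2$ fulfil the hypotheses of \autoref{prop:f1}, which yields
\[
\dgD_\alpha(\cB)(\fF_i(\Yon(A)), \fF_i(\sigma_{\ge n+1}(X))[j]) = 0 \quad \text{for all } j < n \text{ and } i = 1, 2.
\]
Choosing $n \ge k+2$ and applying $\dgD_\alpha(\cB)(\fF_i(\Yon(A)), -[k])$ to the image of the triangle under $\fF_i$, the adjacent Hom groups (in degrees $k$ and $k+1$) vanish, forcing the induced morphism
\[
\dgD_\alpha(\cB)(\fF_i(\Yon(A)), \fF_i(X)[k]) \lto \dgD_\alpha(\cB)(\fF_i(\Yon(A)), \fF_i(\sigma_{\le n}(X))[k])
\]
to be an isomorphism for both $i = 1$ and $i = 2$.

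Since $\sigma_{\le n}(X)$ is bounded above, \autoref{prop:f3} supplies an isomorphism $\theta' \colon \fF_1(\sigma_{\le n}(X)) \isomor \fF_2(\sigma_{\le n}(X))$; combined with the isomorphism $\theta_A \colon \fF_1(\Yon(A)) \isomor \fF_2(\Yon(A))$ furnished by \autoref{lem:C12}, conjugation gives an isomorphism
\[
\dgD_\alpha(\cB)(\fF_1(\Yon(A)), \fF_1(\sigma_{\le n}(X))[k]) \iso \dgD_\alpha(\cB)(\fF_2(\Yon(A)), \fF_2(\sigma_{\le n}(X))[k])
\]
(the commutativity of the square in \autoref{prop:f3} is not needed here, only the fact that $\theta_A$ and $\theta'$ are isomorphisms). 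Chaining this isomorphism with the two from the previous step and the vanishing $\fF_2(X) = 0$ forces $\dgD_\alpha(\cB)(\fF_1(\Yon(A)), \fF_1(X)[k]) = 0$, as required. The main obstacle skirted by this route is the absence of any global natural isomorphism $\fF_1 \iso \fF_2$ on $\dgD(\cA)$: \autoref{prop:f3} only delivers pointwise isomorphisms on bounded-above objects, so we translate the vanishing of $\fF_1(X)$ into a Hom statement via (G1) and relocate the computation into the bounded-above range via stupid truncations, precisely where \autoref{prop:f3} applies.
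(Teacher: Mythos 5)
Your proof is correct and follows essentially the same route as the paper: reduce via Drinfeld's universal property to showing $\fF_1(L)\iso 0$ for $L\in\cL$, use (G1) via \autoref{F1G1}, truncate to the bounded-above range, compare $\fF_1$ and $\fF_2$ there using \autoref{prop:f3}, and conclude from $\fF_2(L)\iso 0$. The only cosmetic difference is that you re-derive \autoref{cor:f2} inline from \autoref{prop:f1} (choosing $n\ge k+2$), whereas the paper cites \autoref{cor:f2} directly and absorbs the shift $[k]$ by noting that $\cL$ is closed under shifts.
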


\begin{proof}
The proof is very similar to the one of \cite[Lemma 5.2]{LO} with the required adjustments due to the more general setting we are working in. More precisely, in view of the main result of \cite{Dr}, it is enough to show that $\fF_1$ factors through the quotient $\dgD(\cA)/\cL$, i.e.\ that $\fF_1(L)\iso 0$, for all $L$ in $\cL$.

By \autoref{F1G1}, we have just to show that
\[
\dgD_\alpha(\cB)(\fF_1(\Yon(A)),\fF_1(L))=0,
\]
for all $A\in\cA$ and $L\in\cL$ (since $\cL$ is closed under shifts).

By \autoref{lem:C12}, we can apply the results of \autoref{sect:abstract} to the functors $\fF_1$ and $\fF_2$. In particular, by \autoref{cor:f2}, there are isomorphisms (for $i=1,2$)
\[
\dgD_\alpha(\cB)(\fF_i(\Yon(A)),\fF_i(L))\iso\dgD_\alpha(\cB)(\fF_i(\Yon(A)),\fF_i(\sigma_{\leq m}(L)))
\]
for $m>1$. On the other hand, by \autoref{prop:f3}, there is an isomorphism $\fF_1(\sigma_{\leq m}(L))\iso\fF_2(\sigma_{\leq m}(L))$.
It follows that
\[
\dgD_\alpha(\cB)(\fF_1(\Yon(A)),\fF_1(L))\iso
\dgD_\alpha(\cB)(\fF_2(\Yon(A)),\fF_2(L)),
\]
and the latter Hom-space is naturally isomorphic to $0$, since $\fF_2(L)\iso0$.
\end{proof}

Hence $\fG_1$ factors through a quasi-functor $\fG\colon\SF{\cA}/\cL'\to\SFa{\cB}$. If we show that it defines an isomorphism in $\Hqe$, \autoref{thm:crit1} would follow immediately, taking into account \autoref{Porta}. This is the content of the next proposition.

\begin{prop}\label{prop:keycrit}
In the above situation, $\fG\colon\SF{\cA}/\cL'\to\SFa{\cB}$ defines an isomorphism in $\Hqe$.
\end{prop}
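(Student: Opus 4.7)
The plan is to show that $\fG$ is an isomorphism in $\Hqe$ by proving that the induced exact functor $\Ho(\fG)\colon\dgD(\cA)/\cL\to\dgD_\alpha(\cB)$ (where we identify $\Ho(\SF{\cA}/\cL')\iso\dgD(\cA)/\cL$ and $\Ho(\SFa{\cB})\iso\dgD_\alpha(\cB)$) is an equivalence of triangulated categories. The strategy is to compare $\Ho(\fG)$, which by \autoref{lem:fact1} is the functor induced on the Verdier quotient by $\fF_1$, with the equivalence $\fPo\comp\fE$, which is the functor induced on the Verdier quotient by $\fF_2$. The natural isomorphism $\theta\colon\fF_1\comp\Yon\isomor\fF_2\comp\Yon$ and the truncation techniques of \autoref{sect:abstract} provide the bridge between them.

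As a first step I would observe that $\Ho(\fG)$ preserves small coproducts: $\fF_1$ does so by \autoref{lem:C12}, and $\fQ$ does so by \autoref{rmk:exadj}, so the induced functor on the Verdier quotient inherits this property. Next, for each $A\in\cA$, $X\in\dgD(\cA)/\cL$ and $k\in\ZZ$, I would prove that $\Ho(\fG)$ induces an isomorphism on $\dgD(\cA)/\cL(\fQ(\Yon(A)),X[k])$. Writing $X=\fQ(Y)$ for some $Y\in\dgD(\cA)$, \autoref{cor:f2} applied to both $\fF_1$ and $\fF_2$ (which satisfy its hypotheses by \autoref{lem:C12}) reduces the Hom-computation on each side to $\fF_i(\sigma_{\le m}(Y[k]))$ for large $m$. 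Since $\sigma_{\le m}(Y[k])$ is bounded above, \autoref{prop:f3} supplies isomorphisms $\theta'\colon\fF_1(\sigma_{\le m}(Y[k]))\isomor\fF_2(\sigma_{\le m}(Y[k]))$ fitting into the commutative squares~\eqref{eqn:quad}. Chasing through these identifications, the map induced by $\Ho(\fG)$ agrees, modulo the isomorphisms $\theta$ and $\theta'$, with the map induced by the equivalence $\fPo\comp\fE$, and is therefore itself an isomorphism.

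Finally, I would promote this to full faithfulness and essential surjectivity via \autoref{prop:PortaG1}. For full faithfulness, fix $Y\in\dgD(\cA)/\cL$ and consider the full subcategory $\mathcal{U}_Y$ of objects $X$ such that $\Ho(\fG)$ is an isomorphism on $\dgD(\cA)/\cL(X,Y[k])$ for every $k\in\ZZ$. Since Hom turns coproducts in the first variable into products and $\Ho(\fG)$ preserves small coproducts, $\mathcal{U}_Y$ is closed under small coproducts; a standard 5-lemma argument makes it triangulated; and it is closed under shifts by its very definition. The previous step shows $\fQ(\Yon(\cA))\subseteq\mathcal{U}_Y$, and $\fQ(\Yon(\cA))$ satisfies (G1) in $\dgD(\cA)/\cL$ by \autoref{inducedgen}, so \autoref{prop:PortaG1} yields $\mathcal{U}_Y=\dgD(\cA)/\cL$. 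For essential surjectivity, once $\Ho(\fG)$ is fully faithful its essential image is a localizing subcategory of $\dgD_\alpha(\cB)$ containing $\fF_1(\Yon(\cA))$, which satisfies (G1) in $\dgD_\alpha(\cB)$ by \autoref{F1G1}; \autoref{prop:PortaG1} then shows the essential image is all of $\dgD_\alpha(\cB)$.

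The main obstacle I anticipate lies in the second step, namely, verifying that the truncation--and--replace identifications assemble into an honest commutative diagram comparing the maps induced by $\Ho(\fG)$ and by the equivalence $\fPo\comp\fE$. Since the isomorphism $\theta'_C$ of \autoref{prop:f3} is not functorial in $C$, one needs to make explicit choices, exploit the commutativity of~\eqref{eqn:quad} for each morphism $f\in\dgD(\cA)(\Yon(A)[k],\sigma_{\le m}(Y[k]))$, and handle the fact that morphisms in $\dgD(\cA)/\cL$ arise from roofs rather than from honest morphisms in $\dgD(\cA)$; the universal property of the Verdier quotient combined with the adjunction of \autoref{rmk:exadj} should suffice to pin everything down.
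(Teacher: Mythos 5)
Your proposal follows essentially the same route as the paper: reduce to showing $\fF:=\Ho(\fG)$ is an equivalence, verify that $\fF$ preserves small coproducts, prove full faithfulness by first handling $\fQ(\Yon(A))$ in the first variable and then invoking \autoref{gen} together with \autoref{prop:PortaG1} via a localizing-subcategory argument, compare $\fF_1$ with $\fF_2$ through stupid truncations using \autoref{lem:C12}, \autoref{cor:f2} and \autoref{prop:f3}, and obtain essential surjectivity from \autoref{F1G1} and \autoref{prop:PortaG1}.

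The one place your sketch leaves daylight is exactly the ``main obstacle'' you flag at the end, and it is worth recording how the paper closes it, because the mechanism is cleaner than a general roof-chasing argument. Rather than writing $X=\fQ(Y)$ for an arbitrary $Y$, the paper sets $P:=\fQ^R(C)$ using the fully faithful right adjoint from \autoref{rmk:exadj}. Then $\fQ(P)\iso C$ and, crucially, the unit $\fQ^R\fQ(P)\iso P$ makes $\fQ_{\Yon(A),P}\colon\dgD(\cA)(\Yon(A),P)\to\dgD(\cA)/\cL(\fQ(\Yon(A)),\fQ(P))$ an isomorphism outright; combined with $\fF_1\iso\fF\comp\fQ$ this replaces the desired assertion $\fF_{\fQ(\Yon(A)),C}$ is an isomorphism by the assertion that $(\fF_1)_{\Yon(A),P}$ is an isomorphism. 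From that point every map one needs to compare is an \emph{honest} map in $\dgD(\cA)$ from $\Yon(A)$ to $\sigma_{\le m}(P)$, so the worry about roofs evaporates; and the square \eqref{eqn:quad} provided by \autoref{prop:f3} is used only for those specific $f\colon\Yon(A)[k]\to\sigma_{\le m}(P)$, with $\theta'_{\sigma_{\le m}(P)}$ playing the role of your $\gamma$, so its lack of functoriality in the object variable never matters. Applying \autoref{cor:f2} to $\fQ$ as well as to $\fF_1,\fF_2$ lets you pass from $P$ to $\sigma_{\le m}(P)$ compatibly on all three sides, which is exactly what makes the diagram with $(\fF_1)_{\Yon(A),\sigma_{\le m}(P)}$, $(\fF_2)_{\Yon(A),\sigma_{\le m}(P)}$ and $\gamma$ close up; since $\fF_2=\fPo\comp\fE\comp\fQ$ with $\fPo\comp\fE$ an equivalence, $(\fF_2)_{\Yon(A),\sigma_{\le m}(P)}$ is an isomorphism, hence so is $(\fF_1)_{\Yon(A),\sigma_{\le m}(P)}$, and you are done.
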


\begin{proof}
Setting $\fF:=\Ho(\fG)\colon\dgD(\cA)/\cL\to\dgD_\alpha(\cB)$, it is enough to show that $\fF$ is an equivalence. Notice that $\fF$ preserves small coproducts, since the same is true for $\fF\comp\fQ\iso\fF_1$ by \autoref{lem:C12}.

We first prove that $\fF$ is fully faithful, namely that the map
\[
\fF_{B,C}\colon\dgD(\cA)/\cL\left(B,C\right)\to\dgD_\alpha(\cB)\left(\fF(B),\fF(C)\right)
\]
is an isomorphism for all $B,C\in\dgD(\cA)/\cL$. Now, it is easy to see that the full subcategory $\cS$ of $\dgD(\cA)/\cL$ which consists of the objects $S$ for which $\fF_{S,C}$ is an isomorphism for every $C\in\dgD(\cA)/\cL$ is a localizing subcategory of $\dgD(\cA)/\cL$. In view of \autoref{gen} and \autoref{prop:PortaG1}, it is therefore enough to prove that $\fF_{\fQ(\Yon(A)),C}$ is an isomorphism, for all $A\in\cA$ and $C\in\dgD(\cA)/\cL$. The proof of this fact proceeds as in \cite[Lemma 5.3]{LO}. Let us outline the argument here for the convenience of the reader. 

Setting $P:=\fQ^R(C)$, we have $\fQ(P)\iso C$, and so $\fF_{\fQ(\Yon(A)),C}$ is an isomorphism if and only if $\fF_{\fQ(\Yon(A)),\fQ(P)}$ is.
Moreover, as the map
\[
\fQ_{\Yon(A),P}\colon\dgD(\cA)\left(\Yon(A),P\right)\to\dgD(\cA)/\cL\left(\fQ(\Yon(A)),\fQ(P)\right)
\]
is an isomorphism by adjunction (since $\fQ^R(\fQ(P))\iso P$) and $\fF_1\iso\fF\comp\fQ$, we can just prove that
\[
(\fF_1)_{\Yon(A),P}\colon\dgD(\cA)\left(\Yon(A),P\right)\to\dgD_\alpha(\cB)\left(\fF_1(\Yon(A)),\fF_1(P)\right)
\]
is an isomorphism.

By \autoref{lem:C12} the functors $\fF_1$ and $\fF_2$ satisfy the hypotheses of  \autoref{cor:f2}, and the same is true of $\fQ$ by (b). In particular, there is an isomorphism
\[
\dgD(\cA)/\cL\left(\fQ(\Yon(A)),\fQ(P)\right)\iso\dgD(\cA)/\cL\left(\fQ(\Yon(A)),\fQ(\sigma_{\leq m}(P))\right),
\]
for $m>1$. Moreover, this is compatible with the natural isomorphism
\[
\dgD(\cA)(\Yon(A),P)\iso\dgD(\cA)(\Yon(A),\sigma_{\leq m}(P)).
\]
As $\fQ_{\Yon(A),P}$ is an isomorphism, it follows that $\fQ_{\Yon(A),\sigma_{\leq m}(P)}$ is an isomorphism, too.

The same argument applies to the functor $\fF_1$, and then it is enough to check that $(\fF_1)_{\Yon(A),\sigma_{\leq m}(P)}$ is an isomorphism. To this purpose, consider the commutative diagram
\[
\xymatrix{
\dgD(\cA)(\Yon(A),\sigma_{\leq m}(P)) \ar[rrr]^-{(\fF_1)_{\Yon(A),\sigma_{\leq m}(P)}} \ar[drrr]_-{(\fF_2)_{\Yon(A),\sigma_{\leq m}(P)}} & & & \dgD_\alpha(\cB)\left(\fF_1(\Yon(A)),\fF_1(\sigma_{\leq m}(P))\right) \ar[d]^-{\gamma} \\
 & & & \dgD_\alpha(\cB)\left(\fF_2(\Yon(A)),\fF_2(\sigma_{\leq m}(P))\right),
}
\]
where the existence of an isomorphism $\gamma$ is ensured by \autoref{prop:f3} (which, again, applies due to \autoref{lem:C12}). Since $\fF_2=\fPo\comp\fE\comp\fQ$, the fact that $\fQ_{\Yon(A),\sigma_{\leq m}(P)}$ is an isomorphism implies that also $(\fF_2)_{\Yon(A),\sigma_{\leq m}(P)}$ is an isomorphism, taking into account that $\fPo\comp\fE$ is an equivalence. In  conclusion, $(\fF_1)_{\Yon(A),\sigma_{\leq m}(P)}$ is an isomorphism as well.

Finally, the essential image of $\fF$ is a localizing subcategory of $\dgD_\alpha(\cB)$ (because $\fF$ preserves small coproducts) which contains $\fPo\comp\fH(\cA)$ (since $\fPo\comp\fH=\fF_2\comp\Yon\iso\fF\comp\fQ\comp\Yon$). As $\fPo$ is an exact equivalence, it follows from \autoref{gen} and \autoref{prop:PortaG1} that $\fF$ is essentially surjective.
\end{proof}

\section{The case of the derived category of a Grothendieck category}\label{sect:Groth}

In this section, we prove \autoref{thm:main1} and discuss some geometric applications of this abstract criterion for Grothendieck categories.

\subsection{The abstract result}\label{subsect:Grothabstr}

Let $\cG$ be a Grothendieck category and let $\cA$ be a full
subcategory of $\cG$ whose objects form a small set of generators of
$\cG$. Setting $\cM:=\Mod{\cA}$, we will denote by
$\fS\colon\cG\to\cM$ the natural functor defined by
\[
\fS(C)(A):=\cG(A,C),
\]
for $C\in\cG$ and $A\in\cA$.

We can first prove the following result which should be compared to \cite[Theorem 7.4]{LO}.

\begin{prop}\label{prop:adj}
The functor $\fS\colon\cG\to\cM$ admits a left adjoint
$\fT\colon\cM\to\cG$. Moreover, $\fT$ is exact, $\fT\comp\fS\iso\id_{\cG}$, $\cN:=\ker\fT$ is a localizing Serre subcategory of
$\cM$ and $\fT$ induces an equivalence $\fT'\colon\cM/\cN\to\cG$ such
that $\fT\iso\fT'\comp\Pi$, where $\Pi\colon\cM\to\cM/\cN$ is the
projection functor.
\end{prop}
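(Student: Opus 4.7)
The plan is to construct $\fT$ as the left Kan extension of the inclusion $\cA\mono\cG$ along the Yoneda embedding $\Yon\colon\cA\to\cM$, and then identify $\cG$ with a Gabriel localization of $\cM$. Explicitly, for any $M\in\cM$ I would choose a two-term presentation $\coprod_{j}\Yon(A_j)\to\coprod_{i}\Yon(A_i)\to M\to 0$ by representables, and set $\fT(M)$ to be the cokernel in $\cG$ of the corresponding morphism $\coprod_{j}A_j\to\coprod_{i}A_i$. The Yoneda identification $\cM(\Yon(A),\fS(C))=\cG(A,C)$ shows this is independent of the chosen presentation and produces the adjunction isomorphism $\cG(\fT(M),C)\iso\cM(M,\fS(C))$, together with $\fT(\Yon(A))\iso A$ for $A\in\cA$.

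For $\fT\comp\fS\iso\id_{\cG}$: since $\cA$ is a set of generators of the Grothendieck category $\cG$, a morphism $f$ of $\cG$ is an isomorphism whenever $\cG(A,f)$ is an isomorphism for every $A\in\cA$ (an easy consequence of the fact that every object of $\cG$ is the target of an epi from a coproduct of objects of $\cA$). Applying this criterion to the counit $\epsilon_C\colon\fT\fS(C)\to C$, the adjunction together with the Yoneda identity gives $\cG(A,\fT\fS(C))\iso\cM(\Yon(A),\fS(C))=\cG(A,C)$, and naturality of the adjunction identifies this with $\cG(A,\epsilon_C)$. Hence $\epsilon_C$ is an isomorphism, $\fT\comp\fS\iso\id_{\cG}$, and $\fS$ is fully faithful.

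The hard part will be the exactness of $\fT$. As a left adjoint $\fT$ is right exact automatically, so the content is left exactness. This is essentially the Gabriel--Popescu theorem in the form adapted from a single generator to a generating subcategory (implicit in \cite[Theorem 7.4]{LO}). The argument I would run is that for a monomorphism $M'\mono M$ in $\cM$ the kernel $K$ of $\fT(M')\to\fT(M)$ in $\cG$ satisfies $\cG(A,K)=0$ for every $A\in\cA$: a map $A\to K$ corresponds, through the adjunction applied to compatible presentations of $M'$ and $M$, to data which already lies inside $M'$ and hence vanishes in $K$. Since generators detect isomorphisms, $K=0$.

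Once $\fT$ is shown to be exact, $\cN=\ker\fT$ is automatically a Serre subcategory, and it is closed under small coproducts because $\fT$ preserves them, so $\cN$ is a localizing Serre subcategory of $\cM$. The universal property of the Gabriel quotient $\Pi\colon\cM\to\cM/\cN$ then yields a unique exact functor $\fT'\colon\cM/\cN\to\cG$ with $\fT\iso\fT'\comp\Pi$; this $\fT'$ is faithful by the definition of $\cN$, essentially surjective by the counit isomorphism (since $C\iso\fT\fS(C)=\fT'\Pi(\fS(C))$ for every $C\in\cG$), and full because $\fS$ descends to a right adjoint of $\fT'$ and is fully faithful. Hence $\fT'$ is an equivalence, completing the proof.
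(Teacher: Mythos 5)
The construction of $\fT$ as the left Kan extension of the inclusion $\cA\hookrightarrow\cG$ along $\Yon$ is correct, and matches what the paper later unwinds in \autoref{coker}; the formal consequence $\fT(\Yon(A))\iso A$ is also fine. The real problem is your argument for $\fT\comp\fS\iso\id_\cG$, which is circular. The adjunction isomorphism $\cG(\fT(M),C)\iso\cM(M,\fS(C))$ describes morphisms \emph{out} of objects of the form $\fT(M)$, not morphisms \emph{into} them, so it does not yield an identification $\cG(A,\fT\fS(C))\iso\cM(\Yon(A),\fS(C))$. If you try to unwind it you get $\cG(A,\fT\fS(C))=\cG(\fT\Yon(A),\fT\fS(C))\iso\cM(\Yon(A),\fS\fT\fS(C))=(\fS\fT\fS(C))(A)=\cG(A,\fT\fS(C))$ — i.e.\ it loops back to the start. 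The statement you are asserting is exactly the full faithfulness of $\fS$, which is the nontrivial half of the Gabriel--Popescu theorem; it is not a formal consequence of ``$\cA$ generates'' plus adjunction plus Yoneda. (A generating subcategory is not automatically dense.) The proof genuinely requires the AB5 axiom, and nothing in your sketch uses exactness of filtered colimits. Your sketch of the exactness of $\fT$ has the same issue: a map $A\to K\subseteq\fT(M')$ does not correspond via the adjunction to ``data inside $M'$'', because the adjunction goes the wrong way; proving $\cG(A,K)=0$ there is again the hard AB5 step.

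The paper does not reprove Gabriel--Popescu. It constructs an explicit equivalence $\fE\colon\cM\to\MOD{R}$, where $R$ is a (generally non-unital) ring built from $U:=\Plus_{A\in\cA}A$, shows $\fS'\iso\fE\comp\fS$ with $\fS'$ a restricted $\cG(U,\farg)$, and then cites \cite[Theorem~2.2]{CENT}, a Gabriel--Popescu type theorem for this non-unital setting, for all the substantive claims (exactness, $\fT\comp\fS\iso\id$, and the localization statement). Your outline could be made correct by replacing the circular step and the vague exactness sketch with an explicit citation of the appropriate Gabriel--Popescu theorem for a generating family (e.g.\ \cite[Theorem~7.4]{LO}, or the reduction via $\fE$ as in the paper), but as written those two steps are gaps, not proofs.
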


\begin{proof}
In \cite[Theorem 2.2]{CENT} the analogous statement is proved for the
functor $\fS'\colon\cG\to\MOD{R}$, which we are going to define.
Consider the object $U:=\Plus_{A\in\cA}A$ of $\cG$ and denote, for
every $A\in\cA$, by $\inc{A}\colon A\mono U$ and $\pro{A}\colon U\epi
A$ the natural inclusion and projection morphisms, respectively. Let
$S$ be the ring (with unit) $\cG(U,U)$ and $R$ the subring of
$S$ consisting of those $s\in S$ for which $s\comp\inc{A}\ne0$ only
for a finite number of $A\in\cA$. Notice that $R$ is a ring with unit
if and only if $\cA$ has a finite number of objects, in which case
obviously $R=S$. Let moreover $\MOD{R}$ be the full subcategory of
$\Mod{R}$ having as objects those $P\in\Mod{R}$ for which $PR=P$
(clearly $\MOD{R}=\Mod{R}=\Mod{S}$ if $\cA$ has a finite number of
objects). Then $\fS'$ is simply given as the composition of
$\cG(U,\farg)\colon\cG\to\Mod{S}$ with the natural functor
$\Mod{S}\to\MOD{R}$ defined on objects by $P\mapsto PR$. To deduce our
statement from \cite[Theorem 2.2]{CENT} it is therefore enough to show
that there is an equivalence of categories $\fE\colon\cM\to\MOD{R}$
such that $\fS'\iso\fE\comp\fS$.

In order to define $\fE$, consider first an object $M$ of $\cM$,
namely a ($\K$-linear) functor $M\colon\cA\opp\to\Mod{\K}$. As a
$\K$-module $\fE(M)$ is just $\Plus_{A\in\cA}M(A)$, whereas the
$R$-module structure is defined as follows. Given $r\in R$ and
$m\in\fE(M)$ with components $m_A\in M(A)$ for every $A\in\cA$, the
element $mr\in\fE(M)$ has components
$(mr)_A=\sum_{B\in\cA}M(\pro{B}\comp r\comp\inc{A})(m_B)$. It is easy
to prove that this actually defines an object $\fE(M)$ of
$\MOD{R}$. As for morphisms, given $M,M'\in\Mod{\cA}$ and a natural
transformation $\gamma\colon M\to M'$, the morphism of $R$-modules
$\fE(\gamma)\colon\fE(M)\to\fE(M')$ sends $m$ to $m'$, where
$m'_A:=\gamma(A)(m_A)$ for every $A\in\cA$. It is not difficult to
check that this really defines a functor $\fE\colon\cM\to\MOD{R}$ and
that $\fS'\iso\fE\comp\fS$.

It remains to prove that $\fE$ is an equivalence. It is clear by
definition that $\fE$ is faithful. As for fullness, given
$M,M'\in\Mod{\cA}$ and a morphism $\phi\colon\fE(M)\to\fE(M')$ in
$\MOD{R}$, it is easy to see that $\phi=\fE(\gamma)$, where
$\gamma\colon M\to M'$ is the natural transformation defined as
follows. For every $A\in\cA$ and for every $a\in M(A)$, denoting by
$m$ the element of $\fE(M)$ such that $m_A=a$ and $m_B=0$ for $A\ne
B\in\cA$, we set $\gamma(A)(a):=\phi(m)_A$. Finally, $\fE$ is
essentially surjective because it is not difficult to prove that for
every $P\in\MOD{R}$ we have $P\iso\fE(M)$ with $M\in\cM$ defined in
the following way. Setting $r_f:=\inc{B}\comp f\comp\pro{A}\in R$ for
every morphism $f\colon A\to B$ of $\cA$, we define $M(A):=Pr_{\id_A}$
for every $A\in\cA$, whereas $M(f)\colon M(B)=Pr_{\id_B}\to
M(A)=Pr_{\id_A}$ for every morphism $f\colon A\to B$ of $\cA$ is given
by $pr_{\id_B}\mapsto pr_f=(pr_f)r_{\id_A}$ for every $p\in P$.
\end{proof}

\begin{remark}\label{rmk:conseq}
It should be noted that while, by \autoref{prop:adj}, the functor $\fT$ is exact, $\fS$ is only left-exact in general. On the other hand, the fact that $\fT\comp\fS\iso\id_\cG$ implies that $\fS$ is fully faithful and that $\fT\comp\Yon$ is isomorphic to the inclusion $\cA\mono\cG$ (since $\fS\rest{\cA}\iso\Yon$ by definition). Here, as in the previous sections, $\Yon\colon\cA\mono\cM\mono\D(\cM)$ is the Yoneda embedding.
\end{remark}

Passing from $\cG$ to its derived category $\D(\cG)$, we observe that the functors $\fT$, $\fT'$ and $\Pi$ being exact, we can denote by the
same letters the corresponding derived functors.

Denote by $\D_{\cN}(\cM)$ the full triangulated subcategory of $\D(\cM)$ consisting of complexes with cohomology in $\cN$. Let moreover $\fQ\colon\D(\cM)\to\D(\cM)/\D_{\cN}(\cM)$ be the projection functor.

\begin{cor}\label{restfun}
The functor $\Pi$ induces an exact equivalence
$\Pi'\colon\D(\cM)/\D_{\cN}(\cM)\to\D(\cM/\cN)$ such that
$\Pi\iso\Pi'\comp\fQ$. Moreover, $\fT'\comp\Pi'\comp\fQ\comp\Yon$ is isomorphic to the inclusion $\cA\mono\cG\mono\D(\cG)$.
\end{cor}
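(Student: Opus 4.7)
The plan is to deduce both assertions formally from \autoref{prop:adj} together with the standard equivalence between the derived category of a Gabriel quotient and the localization of the ambient derived category.

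First, since $\Pi$ is exact at the abelian level, the induced functor on unbounded derived categories is computed termwise, so $H^i(\Pi(X^\bullet))\iso\Pi(H^i(X^\bullet))$ for every $X^\bullet\in\D(\cM)$. Hence for every $X^\bullet\in\D_{\cN}(\cM)$ we have $H^i(\Pi(X^\bullet))=0$ because $\cN=\ker\Pi$, so $\Pi(X^\bullet)\iso 0$ in $\D(\cM/\cN)$. The universal property of the Verdier quotient then produces a unique exact functor $\Pi'\colon\D(\cM)/\D_{\cN}(\cM)\to\D(\cM/\cN)$ with $\Pi\iso\Pi'\comp\fQ$.

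Second, to show that $\Pi'$ is an equivalence, the plan is to invoke the classical result that for a localizing Serre subcategory $\cN$ of a Grothendieck category $\cM$ the natural comparison functor $\D(\cM)/\D_{\cN}(\cM)\to\D(\cM/\cN)$ is an equivalence of triangulated categories. All hypotheses are in place: $\cN$ is localizing by \autoref{prop:adj} (the right adjoint being $\fS\comp(\fT')^{-1}$), while $\cM/\cN$ is itself Grothendieck, being equivalent to $\cG$ via $\fT'$, and hence admits K-injective resolutions. A quasi-inverse to $\Pi'$ is then obtained by deriving the right adjoint of $\Pi$ via K-injective resolutions in $\cM/\cN$; the composite with $\fQ$ provides the inverse up to natural isomorphism. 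The main obstacle here is the unbounded setting, which is precisely why the Grothendieck hypothesis (ensuring the existence of K-injective resolutions on both sides) is essential; in the bounded-below case the statement is entirely routine.

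Finally, the identification of $\fT'\comp\Pi'\comp\fQ\comp\Yon$ with the inclusion $\cA\mono\cG\mono\D(\cG)$ is purely formal. By \autoref{prop:adj}, $\fT\iso\fT'\comp\Pi$ at the abelian level and hence, since all three functors are exact, also at the level of derived functors. Combining with the factorization $\Pi\iso\Pi'\comp\fQ$ from the previous step yields $\fT\iso\fT'\comp\Pi'\comp\fQ$, so that
\[
\fT'\comp\Pi'\comp\fQ\comp\Yon\iso\fT\comp\Yon.
\]
By \autoref{rmk:conseq}, $\fT\comp\Yon$ is isomorphic to the inclusion $\cA\mono\cG$, and composing with the canonical inclusion $\cG\mono\D(\cG)$ gives the desired identification.
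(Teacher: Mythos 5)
Your proof is correct and follows essentially the same approach as the paper, which factors $\Pi$ through $\fQ$, cites \cite[Lemma 5.9]{K0} for the equivalence $\Pi'$ (the "classical result" you invoke, whose hypotheses you correctly verify via \autoref{prop:adj} and the Grothendieck assumption), and reads off the last claim from $\fT\iso\fT'\comp\Pi$ together with \autoref{rmk:conseq}. One small slip: the right adjoint of $\Pi$ is $\fS\comp\fT'$ rather than $\fS\comp(\fT')^{-1}$ (the latter does not even typecheck, as $(\fT')^{-1}\colon\cG\to\cM/\cN$ and $\fS\colon\cG\to\cM$), but this is harmless since \autoref{prop:adj} already states outright that $\cN$ is a localizing Serre subcategory.
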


\begin{proof}
By \autoref{prop:adj}, $\Pi\colon\cM\to\cM/\cN$ admits a right adjoint, so the first part of the statement follows from \cite[Lemma 5.9]{K0}. Hence the diagram
\[\xymatrix{ & & \D(\cM) \ar[dll]_-{\fQ} \ar[d]_{\Pi} \ar[drr]^-{\fT} \\
\D(\cM)/\D_{\cN}(\cM) \ar[rr]_-{\Pi'} & & \D(\cM/\cN) \ar[rr]_{\fT'}
& & \D(\cG)}\]
commutes up to isomorphism. By the
above commutativity, the second part of the statement follows from the fact that the inclusion
$\cA\mono\cG$ is isomorphic to $\fT\comp\Yon$ (see \autoref{rmk:conseq}).
\end{proof}

We are now ready to prove our first result.

\begin{proof}[Proof of \autoref{thm:main1}]
Given a Grothendieck category $\cG$, by \autoref{prop:adj} and \autoref{restfun} we know that $\D(\cG)\iso\D(\cM)/\D_{\cN}(\cM)$, for $\cM=\Mod{\cA}$ and $\cN$ defined as above.

Consider $\cA$, consisting of a small set of generators of $\cG$, as a small dg category all sitting in degree $0$. As noted at the beginning of \autoref{sect:abstract}, there is a natural exact equivalence $\D(\cM)\iso\dgD(\cA)$. By setting $\cL$ to be the full localizing subcategory of $\dgD(\cA)$ which is the image of $\D_\cN(\cM)$ under the above equivalence, we have that $\D(\cG)\iso\dgD(\cA)/\cL$.

Let us observe the following:
\begin{itemize}
\item[(a)] The quotient $\dgD(\cA)/\cL$ is a well generated triangulated category. This is because $\D(\cG)$, which is naturally equivalent to $\dgD(\cA)/\cL$, is well generated by \autoref{ex:alphacompgen} and well generation is obviously  preserved under exact equivalences.
\item[(b)] The quotient functor $\fQ\colon\dgD(\cA)\to\dgD(\cA)/\cL$ is right vanishing. Indeed, one can consider the full subcategory $\cR$ of $\dgD(\cA)/\cL$ whose objects, under the equivalences $\dgD(\cA)/\cL\iso\D(\cM)/\D_{\cN}(\cM)\iso\D(\cG)$ described above, correspond to objects in $\D(\cG)$ with cohomology in strictly positive degrees. It is very easy to check that $\cR$ satisfies the properties of \autoref{abc}, taking into account that, by \autoref{restfun}, the objects $\fQ(\Yon(A))$ (for $A$ in $\cA$) correspond to objects in the abelian category $\cG$.
\end{itemize}
In particular, the assumptions of \autoref{thm:crit1} are satisfied and we can apply that result concluding that the triangulated category $\dgD(\cA)/\cL$ (and hence $\D(\cG)$) has a unique enhancement.
\end{proof}

\subsection{The geometric examples}\label{subsect:Grothex}

We discuss now some geometric incarnations of \autoref{thm:main1}. There are certainly many interesting geometric triangulated categories which are equivalent to the derived category of a Grothendieck category and which are not considered here. So we do not claim that our list of applications is complete. Notice that, beyond the geometric situations studied in \cite{LO} and described in the introduction, the uniqueness of enhancements has been investigated in other cases, e.g.\ for the derived categories of supported quasi-coherent sheaves in special situations (see \cite[Lemma 4.6]{CS}).

\smallskip

\noindent{\bf Algebraic stacks.} Let $X$ be an algebraic stack. For general facts about these geometric objects, we refer to \cite{LMB} and \cite{SP}.

We can consider the abelian categories $\Mod{\ko_X}$ of $\ko_X$-modules on $X$ and $\Qcoh(X)$ of quasi-coherent $\ko_X$-modules on $X$. The fact that $\Qcoh(X)$ is a Grothendieck category is proved in \cite[Tag 06WU]{SP}. Passing to the derived categories, we can consider $\D(\Qcoh(X))$ and the full triangulated subcategory $\Dq(X)$ of $\D(\Mod{\ko_X})$ consisting of complexes with quasi-coherent cohomology. The relation between these two triangulated categories is delicate, as pointed out in \cite[Theorem 1.2]{HNR}.

We then have the following.

\begin{cor}\label{cor:geo1}
If $X$ is an algebraic stack, then $\D(\Qcoh(X))$ has a unique enhancement. If $X$ is also quasi-compact and with quasi-finite affine diagonal, then $\Dq(X)$ has a unique enhancement.
\end{cor}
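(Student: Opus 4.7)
The plan is to reduce both assertions directly to \autoref{thm:main1}. For the first claim, the only thing that needs to be checked is that $\Qcoh(X)$ is a Grothendieck category for an arbitrary algebraic stack $X$, and this is exactly \cite[Tag 06WU]{SP} (which is cited just above the statement). Applying \autoref{thm:main1} with $\cG=\Qcoh(X)$ yields immediately that $\D(\Qcoh(X))$ has a unique enhancement.

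For the second claim, I would identify $\Dq(X)$ with the derived category of a Grothendieck category so that \autoref{thm:main1} applies again. Under the hypotheses that $X$ is quasi-compact with quasi-finite affine diagonal, the natural comparison functor $\D(\Qcoh(X))\to\Dq(X)$ is an exact equivalence by \cite[Theorem 1.2]{HNR}, precisely because these are the concentratedness conditions made explicit there. Since the property of having a unique dg enhancement is an invariant of the triangulated category (two enhancements of $\Dq(X)$ transport via the equivalence to enhancements of $\D(\Qcoh(X))$, and a chain of quasi-equivalences between the latter transports back), the second statement follows at once from the first.

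The only non-routine ingredient is the invocation of \cite[Theorem 1.2]{HNR}, but here it is used as a black box; the delicate analysis of when $\D(\Qcoh(X))\isomor\Dq(X)$ holds is precisely the content of that theorem and the hypotheses quasi-compact plus quasi-finite affine diagonal are tailored to guarantee it. In particular, no further work beyond citing \autoref{thm:main1} and \cite[Theorem 1.2]{HNR} should be needed, so I would expect the proof in the paper to be a few lines at most.
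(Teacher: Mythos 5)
Your argument for the first claim is exactly the paper's: $\Qcoh(X)$ is Grothendieck by \cite[Tag 06WU]{SP}, and \autoref{thm:main1} applies directly. The reduction of the second claim to the first via the exact equivalence $\D(\Qcoh(X))\isomor\Dq(X)$ is also the right strategy, and the observation that having a unique enhancement transports across such an equivalence is correct and in the paper is left implicit.

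However, the citation you give for the key equivalence is wrong, and this obscures a genuine intermediate step. You invoke \cite[Theorem~1.2]{HNR} as directly furnishing the equivalence $\D(\Qcoh(X))\to\Dq(X)$ under the stated hypotheses. But Theorem~1.2 of \cite{HNR} is one of the \emph{negative} results of that paper (the paper itself cites it in the paragraph preceding the corollary precisely to stress that the comparison ``is delicate''), not the positive criterion. The paper's actual route is in two steps: first, \cite[Theorem~A]{HR} shows that under the hypotheses (quasi-compact, quasi-finite affine diagonal) the category $\Dq(X)$ is compactly generated by a single object; second, \cite[Theorem~1.1]{HNR} says that this compact generation implies the natural functor $\D(\Qcoh(X))\to\Dq(X)$ is an exact equivalence. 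Your proposal collapses these into a single misattributed citation and therefore omits the nontrivial verification that $\Dq(X)$ is compactly generated, which is what the positive theorem of \cite{HNR} actually takes as input. The strategy is sound, but as written the justification for the second claim would not compile against the literature; you need both \cite[Theorem~A]{HR} and \cite[Theorem~1.1]{HNR}.
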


\begin{proof}
The first part of the statement is an obvious consequence of \autoref{thm:main1}. For the second part, observe that, by \cite[Theorem A]{HR}, the category $\Dq(X)$ is compactly generated by a single object. Hence, by \cite[Theorem 1.2]{HNR}, the natural functor $\D(\Qcoh(X))\to\Dq(X)$ is an exact equivalence.
\end{proof}

\begin{remark}\label{rmk:schemes1}
The above result specializes to the case of schemes. In particular, $\D(\Qcoh(X))$ has a unique enhancement for any scheme $X$. If $X$ is quasi-compact and semi-separated (i.e.\ the diagonal is affine or, equivalently, the intersection of two open affine subschemes in $X$ is affine), then $\D(\Qcoh(X))\iso\Dq(X)$ (see \cite[Corollary 5.5]{BN}) and the same uniqueness result holds for $\Dq(X)$. This extends vastly the results in \cite{LO}, where the uniqueness results for both categories are proved only for quasi-compact, semi-separated schemes with enough locally free sheaves. This last condition means that for any finitely presented sheaf $F$ there is an epimorphism $E\epi F$ in $\Qcoh(X)$, where $E$ is locally free of finite type.

As in \cite[Remark 7.7]{LO}, we should observe here that we can take $X$ to be a semi-separated scheme rather than separated, because the proof of \cite[Corollary 5.5]{BN} works for a semi-separated scheme as well.
\end{remark}

\smallskip

\noindent{\bf Twisted sheaves.} Let $X$ be a scheme and pick
$\alpha\in H^2_{\text {\'et}}(X,\ko_X^*)$, i.e.\ an
element in the Brauer group $\Br(X)$ of $X$. We may represent $\alpha$ by a \v{C}ech
2-cocycle $\{\alpha_{ijk}\in\Gamma(U_i\cap U_j\cap
U_k,\ko^*_X)\}$ with $X=\bigcup_{i\in I} U_i$  an
appropriate  open cover in the \'etale topology. An \emph{$\alpha$-twisted
quasi-coherent sheaf} $E$ consists of pairs $(\{E_i\}_{i\in
I},\{\varphi_{ij}\}_{i,j\in I})$ such that the $E_i$ are
quasi-coherent sheaves on $U_i$ and $\varphi_{ij}:{E}_j|_{U_i\cap
U_j}\rightarrow{E}_i|_{U_i\cap U_j}$ are isomorphisms satisfying
the following conditions:
\begin{itemize}
\item $\varphi_{ii}=\id$;
\item $\varphi_{ji}=\varphi_{ij}^{-1}$;
\item $\varphi_{ij}\comp\varphi_{jk}\comp\varphi_{ki}=\alpha_{ijk}\cdot\id$.
\end{itemize}
We denote by $\Qcoh(X,\alpha)$ the abelian category of such
$\alpha$-twisted quasi-coherent sheaves on $X$. It is proved in
\cite[Proposition 2.1.3.3]{Li} that this definition coincides with the
alternative one in terms of quasi-coherent sheaves on the gerbe
$\mathcal{X}\to X$ on $X$ associated to $\alpha$.

\begin{prop}
If $X$ is a scheme and $\alpha\in\Br(X)$, then $\Qcoh(X,\alpha)$ is a Grothendieck abelian category.
\end{prop}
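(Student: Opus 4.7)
The plan is to reduce the statement to the analogous fact for algebraic stacks that was invoked in the proof of \autoref{cor:geo1}. The remark right before the statement already records the key input: by \cite[Proposition~2.1.3.3]{Li}, there is an equivalence $\Qcoh(X,\alpha)\iso\Qcoh(\mathcal{X})$, where $\mathcal{X}\to X$ is the gerbe on $X$ associated to $\alpha$. A gerbe over a scheme is in particular an algebraic stack in the sense of \cite{SP}, so \cite[Tag~06WU]{SP} implies that $\Qcoh(\mathcal{X})$ is a Grothendieck abelian category, and transporting this property across the equivalence proves the proposition.

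If one prefers to argue directly from the \v{C}ech cocycle description, the recipe is uniform: perform every construction componentwise on the $U_i$ (using that each $\Qcoh(U_i)$ is abelian and Grothendieck), then transport the transition isomorphisms $\varphi_{ij}$ by functoriality of that construction. Because the same functor is applied to both sides of each overlap, the cocycle identity $\varphi_{ij}\comp\varphi_{jk}\comp\varphi_{ki}=\alpha_{ijk}\cdot\id$ is preserved. This immediately delivers kernels, cokernels, and small coproducts in $\Qcoh(X,\alpha)$, and, because filtered colimits in each $\Qcoh(U_i)$ are exact, also exactness of filtered colimits (AB5).

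The only nontrivial axiom in this hands-on approach is the existence of a small set of generators: one would have to produce $\alpha$-twisted sheaves by inducing from each $U_i$ a generator of $\Qcoh(U_i)$, regluing the resulting local data across overlaps using the cocycle $\{\alpha_{ijk}\}$, and then check both smallness and the defining generating property of the resulting collection. This is precisely the step that the gerbe reformulation bypasses for free, since on the algebraic stack $\mathcal{X}$ a small generating set is furnished by an \'etale atlas and is packaged into \cite[Tag~06WU]{SP}. For this reason the main, and in fact essentially the only, obstacle in a direct proof is precisely what motivates going through the gerbe, which is the route I would take.
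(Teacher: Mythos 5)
Your first paragraph is precisely the paper's proof: it passes through the equivalence $\Qcoh(X,\alpha)\iso\Qcoh(\mathcal{X})$ of \cite[Proposition 2.1.3.3]{Li} (following the argument of \cite[Proposition 3.2]{A}) and then uses that $\mathcal{X}$ is an algebraic stack, so that $\Qcoh(\mathcal{X})$ is Grothendieck by \cite[Tag 06WU]{SP}. The additional discussion of the \v{C}ech-cocycle route is a reasonable sanity check but is not needed, and you correctly identify the generator step as the reason the gerbe reformulation is preferable.
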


\begin{proof}
The same argument used in the proof of \cite[Proposition 3.2]{A}
(where $X$ is assumed to be quasi-compact and quasi-separated) works
in this greater generality.\footnote{We thank Benjamin Antieau for
  pointing this out.} Indeed, denoting by $\mathcal{X}\to X$ the
gerbe associated to $\alpha$, one just needs to know that
$\Qcoh(\mathcal{X})$ is a Grothendieck category, which is true because
in any case $\mathcal{X}$ is an algebraic stack.
\end{proof}

It is then clear from \autoref{thm:main1} that we can deduce the following.

\begin{cor}\label{cor:geo2}
If $X$ is a scheme and $\alpha\in\Br(X)$, then the triangulated category $\D(\Qcoh(X,\alpha))$ has a unique enhancement.
\end{cor}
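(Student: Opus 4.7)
The plan is to invoke \autoref{thm:main1} directly, applied to the Grothendieck category $\Qcoh(X,\alpha)$. The preceding proposition has established that $\Qcoh(X,\alpha)$ is indeed a Grothendieck abelian category (via the identification with $\Qcoh(\mathcal{X})$, where $\mathcal{X}\to X$ is the gerbe associated to $\alpha$, together with the fact that $\mathcal{X}$ is an algebraic stack and hence has a Grothendieck category of quasi-coherent sheaves). Since \autoref{thm:main1} asserts that the derived category $\D(\cG)$ of any Grothendieck category $\cG$ has a unique dg enhancement, taking $\cG=\Qcoh(X,\alpha)$ immediately yields the claim.

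There is essentially no obstacle here: both nontrivial ingredients, namely the abstract uniqueness criterion and the Grothendieck property of twisted quasi-coherent sheaves, have been established in the preceding material, so the proof consists of a single citation step. One may remark that the result can alternatively be viewed as an instance of \autoref{cor:geo1} applied to the gerbe $\mathcal{X}$, in view of the equivalence $\D(\Qcoh(X,\alpha))\iso\D(\Qcoh(\mathcal{X}))$; however, stating it as a separate corollary is useful because twisted derived categories arise frequently in geometric applications (for instance in the study of Brauer--Severi varieties or derived equivalences between twisted varieties) without explicit reference to the underlying gerbe.
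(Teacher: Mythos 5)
Your proof is correct and matches the paper's approach exactly: the paper likewise deduces the corollary directly from \autoref{thm:main1} once the preceding proposition establishes that $\Qcoh(X,\alpha)$ is a Grothendieck category. The additional remark about viewing it through \autoref{cor:geo1} applied to the gerbe is a reasonable observation but not part of the paper's argument.
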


\section{The case of the category of compact objects}\label{sect:crit2}

In this section we prove \autoref{thm:crit2}. This needs some preparation. In particular, using the arguments in \autoref{subsect:Grothabstr}, we construct an exact equivalence $\dgD(\cA)/\cL\iso\D(\cG)$, for some localizing subcategory $\cL$ of $\dgD(\cA)$ and reduce to the criterion for uniqueness due to Lunts and Orlov (see \cite[Theorem 2]{LO}). Verifying that the assumptions of Lunts--Orlov's result are satisfied is the main and most delicate task of this section.

\subsection{The first reduction}\label{subsect:firstred}

If $\cG$ is a Grothendieck category and $\cA$ is a small set of generators of $\cG$ which we think of as a full subcategory of $\cG$, we know from \autoref{subsect:Grothabstr} that there is a pair of adjoint functors
\[
\fT\colon\cM\to\cG\qquad\fS\colon\cG\to\cM
\]
where $\cM:=\Mod{\cA}$.

As already explained in the proof of \autoref{thm:main1}, the quotient $\D(\cM)/\D_\cN(\cM)$ (where $\cN:=\ker\fT$) is naturally equivalent to $\dgD(\cA)/\cL$. For this, we think of $\cA$ as a dg category sitting in degree $0$ and we take $\cL$ to be the localizing subcategory corresponding to $\D_\cN(\cM)$ under the natural equivalence $\D(\cM)\iso\dgD(\cA)$. Moreover, there is an exact equivalence $\D(\cG)\to\dgD(\cA)/\cL$, such that $A$ in $\cA$, seen as a subcategory of $\cG$, is mapped to $\fQ(\Yon(A))$, where $\fQ\colon\dgD(\cA)\to\dgD(\cA)/\cL$ is the quotient functor. As a consequence,
\[
\dgD(\cA)/\cL(\fQ(\Yon(A_1)),\fQ(\Yon(A_2))[i])=0,
\]
for all $A_1,A_2\in\cA$ and all integers $i<0$.

Consider now the following result.

\begin{thm}[\cite{LO}, Theorem 2]\label{thm:LO}
Let $\cA$ be a small category and let $\cL$ be a localizing subcategory of $\dgD(\cA)$ such that:
\begin{itemize}
\item[{\rm (a)}] $\cL^c=\cL\cap\dgD(\cA)^c$ and $\cL^c$ satisfies {\rm (G1)} in $\cL$;
\item[{\rm (b)}] $\dgD(\cA)/\cL(\fQ(\Yon(A_1)),\fQ(\Yon(A_2))[i])=0$, for all $A_1,A_2\in\cA$ and all integers $i<0$.
\end{itemize}
Then $(\dgD(\cA)/\cL)^c$ has a unique enhancement.
\end{thm}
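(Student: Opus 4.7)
The plan is to follow the blueprint of \autoref{thm:crit1} (proved in \autoref{sect:firstcrit}), replacing the appeal to \autoref{Porta} by the classical \autoref{thm:NeemanComp}, which is available thanks to condition (a). Indeed, that condition implies, via \autoref{thm:NeemanComp}, that $\dgD(\cA)/\cL$ is compactly generated, that $\fQ$ preserves compactness, and that $(\dgD(\cA)/\cL)^c$ is the idempotent completion of $\dgD(\cA)^c/\cL^c$. In particular, each $\fQ(\Yon(A))$ is a compact object of $\dgD(\cA)/\cL$, so for any enhancement $(\cC,\fE)$ of $(\dgD(\cA)/\cL)^c$ the composition $\fH:=\fE^{-1}\comp\fQ\comp\Yon\colon\cA\to\Ho(\cC)$ makes sense.

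Given such an enhancement $\cC$, I would first construct a quasi-functor $\cA\to\cC$ exactly as in \autoref{subsect:quasifun1}: let $\cB_0\subseteq\cC$ be the full dg subcategory with $\Ho(\cB_0)=\fH(\cA)$. By condition (b), transported through $\fE$, the Hom-complexes in $\cB_0$ have vanishing negative cohomology, so the truncation dg functor $\tau_{\leq 0}(\cB_0)\to\Ho(\cB_0)$ is a quasi-equivalence, producing a quasi-functor $\fH'\colon\cA\to\cC$. Extending via $\Ind$ yields a quasi-functor $\fG_1\colon\SF{\cA}\to\SF{\cC}$, whose homotopy functor $\fF_1\colon\dgD(\cA)\to\dgD(\cC)$ preserves small coproducts and satisfies the vanishing \eqref{coprodvan} of \autoref{prop:f1}, again by (b). In particular the results of \autoref{sect:abstract} apply to $\fF_1$.

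The decisive step is to show $\fG_1$ factors through the quotient $\SF{\cA}/\cL'$, where $\cL'\subseteq\SF{\cA}$ is the dg enhancement of $\cL$. Equivalently, $\fF_1(L)\iso 0$ must hold for every $L\in\cL$. This is where condition (a) is essential: since $\cL$ is generated as a localizing subcategory by $\cL^c$ and $\fF_1$ commutes with small coproducts, it suffices to handle $L\in\cL^c\subseteq\dgD(\cA)^c$. Such an $L$ is quasi-isomorphic to a bounded semi-free module, so \autoref{prop:f3} produces an isomorphism $\fF_1(L)\iso\fF_2(L)$, where $\fF_2\colon\dgD(\cA)\to\dgD(\cC)$ is an auxiliary exact functor built by composing $\fQ$ with the natural extension of $\fE^{-1}$ to ind-completions (both $\dgD(\cA)/\cL$ and $\dgD(\cC)$ being compactly generated by $(\dgD(\cA)/\cL)^c\iso\Ho(\cC)$). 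Since $\fQ(L)\iso 0$, this forces $\fF_2(L)\iso 0$, hence $\fF_1(L)\iso 0$.

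The resulting quasi-functor $\fG\colon\SF{\cA}/\cL'\to\SF{\cC}$ gives an exact functor $\fF=\Ho(\fG)\colon\dgD(\cA)/\cL\to\dgD(\cC)$ preserving small coproducts. A verbatim repetition of the truncation argument in the proof of \autoref{prop:keycrit} shows $\fF$ is fully faithful on the localizing subcategory generated by $\fQ\comp\Yon(\cA)$, which equals $\dgD(\cA)/\cL$ by \autoref{inducedgen} and \autoref{prop:PortaG1}. Restricting to compacts, $\fF$ induces a fully faithful functor $(\dgD(\cA)/\cL)^c\to\dgD(\cC)^c\iso\Ho(\cC)$, and essential surjectivity follows because $\fH(\cA)$ generates $\Ho(\cC)$ up to direct summands, by \autoref{thm:NeemanComp} (iv). The full dg subcategory of $\SF{\cA}/\cL'$ on those objects whose class lies in $(\dgD(\cA)/\cL)^c$ is a canonical enhancement of $(\dgD(\cA)/\cL)^c$ (independent of $\cC$), and $\fG$ restricts to a quasi-equivalence between it and $\cC$. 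I expect the main obstacle to be the factorization step: without (a), one cannot reduce the vanishing of $\fF_1$ on the whole of $\cL$ to the compact case, which is the only one where the truncation results of \autoref{sect:abstract} can be directly invoked.
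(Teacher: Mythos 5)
First, a point of comparison: the paper does not prove this statement at all — it is imported verbatim from Lunts--Orlov as \cite[Theorem 2]{LO} and used as a black box — so there is no internal proof to measure your attempt against. Your overall architecture is nevertheless the right one (it is the strategy of \cite{LO}, mirrored in the paper's proof of \autoref{thm:crit1}), and several steps are sound: deducing from (a) and \autoref{thm:NeemanComp} that $\fQ(\Yon(A))$ is compact, the truncation $\tau_{\le0}(\cB_0)$ and the construction of $\fG_1$ via $\Ind$, the reduction of the vanishing $\fF_1(\cL)\iso0$ to $\cL^c$ using generation and preservation of coproducts, and the final identification of $\cC$ with the perfect hull of the canonical enhancement.

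The genuine gap is the auxiliary functor $\fF_2$. You define it as ``$\fQ$ composed with the natural extension of $\fE^{-1}$ to ind-completions''. No such extension exists at this stage: $\fE^{-1}\colon(\dgD(\cA)/\cL)^c\to\Ho(\cC)$ is merely an exact functor of triangulated categories, and an exact, coproduct-preserving functor $\dgD(\cA)/\cL\to\dgD(\cC)$ restricting to it on compact objects cannot be produced by a purely triangulated-level construction; producing one is tantamount to lifting $\fE$ to a quasi-functor, which is essentially the content of the theorem. This is precisely why the big-category case (\autoref{thm:crit1}) is easier: there the given equivalence $\fE$ is already defined on all of $\dgD(\cA)/\cL$, so $\fF_2=\fPo\comp\fE\comp\fQ$ comes for free, whereas here $\fE$ only sees compact objects. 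As written, both your factorization step and your ``verbatim repetition'' of \autoref{prop:keycrit} (whose proof applies $\fF_2$ to the non-compact bounded-above objects $\sigma_{\leq m}(\fQ^R(C))$) rest on this nonexistent functor, so the argument is circular at its decisive point.

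The repair is to avoid a globally defined $\fF_2$. For $L\in\cL^c$, which is represented up to direct summands by a bounded complex of finitely generated free $\cA$-modules (not itself a bounded semi-free module, as you assert), the comparison isomorphism $\fF_1(L)\iso0$ can be built directly by the inductive argument of \cite[Proposition 3.4]{LO}: for such $L$ the stupid filtration is finite and each graded piece is a finite coproduct of objects $\Yon(A)[s]$, so one only needs the second functor to be defined on $\dgD(\cA)^c$, where the composite $\dgD(\cA)^c\to(\dgD(\cA)/\cL)^c\mor{\fE^{-1}}\Ho(\cC)\hookrightarrow\dgD(\cC)$ makes sense thanks to \autoref{thm:NeemanComp} (iii), and condition (b) supplies the required vanishing with finite index sets. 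Likewise, full faithfulness of the induced functor on $(\dgD(\cA)/\cL)^c$ should be obtained by d\'evissage in both variables down to the objects $\fQ(\Yon(A))$, rather than by invoking $\fQ^R$ and the comparison with $\fF_2$ on arbitrary bounded-above complexes.
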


By the discussion above, (b) is verified. If we could prove that the same is true for (a), then this theorem would immediately imply that $\D(\cG)^c$ has a unique enhancement. Thus, in order to prove \autoref{thm:crit2}, it is enough to show that the assumptions (1)--(4) in the statement imply that (a) holds. This delicate check will be the content of the next section.

\subsection{Verifying assumption (a) in \autoref{thm:LO}}\label{subsect:assumpt}

In the following we need to know the precise definition of $\fT\colon\cM\to\cG$. 
To this purpose, we first fix some notation. For $M\in\cM$, let
$\comma{M}$ be the comma category whose objects are pairs $(A,a)$ with
$A\in\cA$ and $a\in\cM(\YA{A},M)$, and whose morphisms are
given by
\[\comma{M}((A',a'),(A,a)):=
\{f\in\cA(A',A)\st a'=a\comp\YA{f}\}.\]
Observe that, by Yoneda's lemma, $\cM(\YA{A},M)$ can be
identified with $M(A)$ and that, in this way, the above equality
$a'=a\comp\YA{f}$ becomes $a'=M(f)(a)$; in what follows we will freely
use these identifications. Denoting by $\fgt{M}\colon\comma{M}\to\cA$
the forgetful functor, it is well known (see, for instance,
\cite[Section III.7]{M}) that
\[
M\iso\dlim(\comma{M}\mor{\fgt{M}}\cA\mor{\Yon}\cM),
\]
where the colimit is taken over the composition $\Yon\circ\fgt{M}$.
Since $\fT$ (being a left adjoint) preserves colimits and $\fT\comp\Yon$ is isomorphic to the inclusion $\cA\mono\cG$ (see \autoref{rmk:conseq}), we obtain
\[
\fT(M)\iso\dlim(\comma{M}\mor{\fgt{M}}\cA\mono\cG).
\]
More explicitly, consider the objects of $\cG$
\[\limtar{M}:=\Plus_{(A,a)\in\comma{M}}A,\qquad
\limsrc{M}:=\Plus_{(f\colon(A',a')\to(A,a))\in\Mor\comma{M}}A',\]
and denote by $\inc{(A,a)}\colon A\mono\limtar{M}$ (for every object
$(A,a)$ of $\comma{M}$) and $\inc{f}\colon A'\mono\limsrc{M}$ (for
every morphism $f\colon(A',a')\to(A,a)$ of $\comma{M}$) the
natural morphisms. Then, by (the dual version of) \cite[Theorem 2, p.\ 113]{M}, we have:

\begin{lem}\label{coker}
There is a natural isomorphism
\[
\fT(M)\iso\cok(\limmap{M}\colon\limsrc{M}\to\limtar{M}),
\]
where, for every morphism $f\colon(A',a')\to(A,a)$ of
$\comma{M}$,
\begin{equation}\label{limmap}
\limmap{M}\comp\inc{f}:=\inc{(A',a')}-\inc{(A,a)}\comp f.
\end{equation}
\end{lem}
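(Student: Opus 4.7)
The plan is to recognize this as an immediate application of the standard formula expressing a colimit in a cocomplete additive category as the cokernel of a canonical map between coproducts.

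First I would recall the general fact, which is the dual of \cite[Theorem 2, p.\ 113]{M} specialized from a coequalizer of two parallel maps to the cokernel of their difference in the additive setting: for any small category $\cat{I}$ and any functor $F\colon\cat{I}\to\cD$ with values in a cocomplete additive category $\cD$, there is a natural isomorphism
\[
\dlim F\iso\cok\!\left(\Plus_{(f\colon i\to j)\in\Mor\cat{I}}F(i)\lto\Plus_{i\in\Ob\cat{I}}F(i)\right),
\]
where the component of the map at a morphism $f\colon i\to j$ is $\iota_i-\iota_j\comp F(f)$, with $\iota_i$ and $\iota_j$ denoting the canonical inclusions into the right-hand coproduct.

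Then I would specialize to the case $\cat{I}=\comma{M}$ and $F$ equal to the composition $\comma{M}\mor{\fgt{M}}\cA\mono\cG$ (noting that $\cG$ is cocomplete and additive, being Grothendieck). Under this substitution the right-hand coproduct becomes $\limtar{M}$, the left-hand one becomes $\limsrc{M}$, and the component of the map at a morphism $f\colon(A',a')\to(A,a)$ of $\comma{M}$ becomes precisely $\inc{(A',a')}-\inc{(A,a)}\comp f$, which matches $\limmap{M}\comp\inc{f}$ from \eqref{limmap}. Combined with the isomorphism $\fT(M)\iso\dlim(\comma{M}\mor{\fgt{M}}\cA\mono\cG)$ recalled just before the lemma statement, this yields the claimed cokernel description of $\fT(M)$.

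Naturality of the isomorphism in $M$ follows from the naturality of the colimit formula together with the functoriality of the comma category construction $M\mapsto\comma{M}$. No serious obstacle is expected; the argument is essentially bookkeeping specializing a standard categorical identity to the case at hand.
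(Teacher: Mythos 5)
Your proof is correct and takes essentially the same route as the paper: the paper likewise invokes the dual of \cite[Theorem 2, p.\ 113]{M} together with the observation that $\fT$, being a left adjoint, preserves the canonical colimit presentation $M\iso\dlim(\comma{M}\to\cA\mor{\Yon}\cM)$ and restricts on $\cA$ to the inclusion $\cA\mono\cG$. Your only addition is to spell out the standard passage from coequalizer of two parallel maps to cokernel of their difference in the additive setting, which the paper leaves implicit.
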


Let us now move to the core of the proof that assumption (a) in \autoref{thm:LO} holds in our situation. As explained in the introduction, we do not expect it to hold true in general. This is the reason why we need the further assumptions (1)--(4) in \autoref{thm:crit2}. For the convenience of the reader, we list them again here:
\begin{enumerate}
\item\label{sum} $\cA$ is closed under finite coproducts;
\item\label{noeth} Every object of $\cA$ is noetherian in $\cG$;
\item\label{ker} If $f\colon A'\epi A$ is an epimorphism of $\cG$ with
$A,A'\in\cA$, then $\ker f\in\cA$;
\item\label{Ext} For every $A\in\cA$ there exists $N(A)>0$ such that
$\D(\cG)(A,\sh[N(A)]{A'})=0$ for every $A'\in\cA$.
\end{enumerate}

\begin{remark}\label{finsum}
If $f\colon\Plus_{i\in I}C_i\to C$ (with $I$ a small set) is a morphism in $\cG$ and
$B$ is a noetherian subobject of $C$ such that $B\subseteq\im f$,
then there exists a finite subset $I'$ of $I$ such that $B\subseteq
f(\Plus_{i\in I'}C_i)$ (for otherwise we could find elements
$i_1,i_2,\dots$ in $I$ such that $f(\Plus_{j=1}^nC_{i_j})\cap B$
for $n>0$ form a strictly increasing sequence of subobjects of $B$).
\end{remark}

\begin{lem}\label{epicomp}
Assume that conditions \eqref{sum} and \eqref{noeth} above are satisfied. If
$f\colon C\epi A$ is an epimorphism of $\cG$ with $A\in\cA$, then
there exists a morphism $g\colon A'\to C$ with $A'\in\cA$ such that
$f\comp g\colon A'\epi A$ is again an epimorphism of $\cG$.
\end{lem}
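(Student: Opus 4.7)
The plan is to exploit the fact that $\cA$ generates $\cG$ together with the noetherianness of $A$ in order to cut a large coproduct down to a finite one.

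First I would use that $\cA$ is a set of generators: there exist a small set $I$, objects $A_i\in\cA$ for $i\in I$, and an epimorphism $h\colon\Plus_{i\in I}A_i\epi C$ in $\cG$. Composing with the given epimorphism $f$ yields an epimorphism
\[
f\comp h\colon\Plus_{i\in I}A_i\epi A.
\]

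The key step is now to reduce the indexing set $I$ to a finite one. Since $A\in\cA$, condition \eqref{noeth} says that $A$ is a noetherian object of $\cG$. Taking $B=A$ in \autoref{finsum} applied to the morphism $f\comp h$ (whose image equals $A$, and in particular contains the noetherian subobject $B=A\subseteq A$), we conclude that there exists a finite subset $I'\subseteq I$ such that
\[
A\subseteq (f\comp h)\!\left(\Plus_{i\in I'}A_i\right),
\]
that is, the restriction of $f\comp h$ to the finite subcoproduct $\Plus_{i\in I'}A_i$ is still an epimorphism onto $A$.

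Set $A':=\Plus_{i\in I'}A_i$. By condition \eqref{sum}, $\cA$ is closed under finite coproducts, so $A'\in\cA$. Letting $\iota\colon A'\mono\Plus_{i\in I}A_i$ be the canonical morphism of the finite subcoproduct into the full coproduct and defining
\[
g:=h\comp\iota\colon A'\lto C,
\]
the previous step says exactly that $f\comp g=(f\comp h)\comp\iota\colon A'\epi A$ is an epimorphism, as required. There is no real obstacle here; the argument is essentially a direct application of the generating property of $\cA$ combined with the noetherian reduction provided by \autoref{finsum}, and the closure under finite coproducts ensures that the reduced source object lies again in $\cA$.
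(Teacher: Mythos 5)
Your proof is correct and follows essentially the same route as the paper's: use the generators of $\cG$ to produce an epimorphism from a small coproduct of objects of $\cA$ onto $C$, compose with $f$, invoke \autoref{finsum} (with $B=A$, noetherian by condition \eqref{noeth}) to cut down to a finite subcoproduct, and use condition \eqref{sum} to see that this finite coproduct lies in $\cA$.
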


\begin{proof}
Given $f$ as in the statement, there exist a small set $I$ and an epimorphism
$g'\colon\Plus_{i\in I}A_i\epi C$ (so that $f\comp g'$ is also an
epimorphism) with $A_i\in\cA$ for every $i\in I$ (because the objects
of $\cA$ form a small set of generators of $\cG$). As $A$ is noetherian in
$\cG$ by condition \eqref{noeth}, \autoref{finsum} implies that
there exists a finite subset $I'$ of $I$ such that, setting
$A':=\Plus_{i\in I'}A_i$ (which is an object of $\cA$ thanks to
condition \eqref{sum}) and $g:=g'\rest{A'}$, the composition $f\comp
g\colon A'\epi A$ is an epimorphism of $\cG$.
\end{proof}

\begin{prop}\label{torsion}
If conditions \eqref{sum} and \eqref{noeth} are satisfied, then $\cN$
coincides with the full subcategory $\cN'$ of $\cM$ having as objects
those $M\in\cM$ satisfying the following property: for every object
$(A,a)$ of $\comma{M}$ there exists an epimorphism $f\colon A'\epi A$
of $\cG$ with $A'\in\cA$ such that $a\comp\YA{f}=0$.
\end{prop}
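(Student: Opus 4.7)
The plan is to establish the two inclusions $\cN' \subseteq \cN$ and $\cN \subseteq \cN'$ separately, by directly exploiting the explicit cokernel presentation $\fT(M) \iso \cok\limmap{M}$ of \autoref{coker} in combination with \autoref{epicomp}.

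For $\cN' \subseteq \cN$, given $M \in \cN'$ I will show that the quotient $q \colon \limtar{M} \epi \fT(M)$ kills every structure map $\inc{(A,a)}$, forcing $\fT(M) = 0$. Two observations suffice. First, for any $A' \in \cA$ the pair $(A',0)$ lies in $\comma{M}$ and admits its own zero endomorphism $0_{A'} \colon (A',0) \to (A',0)$, so \eqref{limmap} reads $\limmap{M} \comp \inc{0_{A'}} = \inc{(A',0)}$ and hence $q \comp \inc{(A',0)} = 0$. Second, for an arbitrary $(A,a) \in \comma{M}$ the hypothesis supplies an epimorphism $f \colon A' \epi A$ with $A' \in \cA$ and $M(f)(a) = 0$; this $f$ is then a morphism $(A',0) \to (A,a)$ in $\comma{M}$, and \eqref{limmap} yields $\inc{(A,a)} \comp f = \inc{(A',0)} - \limmap{M} \comp \inc{f}$, whence $q \comp \inc{(A,a)} \comp f = 0$. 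Since $f$ is epi in $\cG$, this forces $q \comp \inc{(A,a)} = 0$, as required.

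For the harder inclusion $\cN \subseteq \cN'$, given $M \in \cN$ and $(A,a) \in \comma{M}$, I need to construct the desired epimorphism. Set $M' := \im\bigl(a \colon \YA{A} \to M\bigr)$; as $\cN$ is a Serre subcategory of $\cM$, $M' \in \cN$, so applying the exact functor $\fT$ to $0 \to K \to \YA{A} \to M' \to 0$ (with $K$ the kernel) and using $\fT(\YA{A}) \iso A$ together with $\fT(M') = 0$ shows that $\fT(K) \to A$ is an isomorphism. In particular the composite $\limtar{K} \epi \fT(K) \iso A$ is an epimorphism, where by \autoref{coker} applied to $K$ one has $\limtar{K} = \Plus_{(A_0,k_0) \in \comma{K}} A_0$ with structure maps labelled by elements $k_0 \in \cA(A_0,A)$ satisfying $M(k_0)(a) = 0$. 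Then \autoref{epicomp} supplies $A'' \in \cA$ and $g \colon A'' \to \limtar{K}$ whose composition $f \colon A'' \to A$ is epi. Noetherianness of $A''$ (hypothesis (2)) combined with \autoref{finsum} forces $g$ to factor through a finite subcoproduct $\Plus_{j=1}^n A_j \hookrightarrow \limtar{K}$ with components $g_j \colon A'' \to A_j$ and corresponding $k_j \in K(A_j) \subseteq \cA(A_j,A)$. Inside this finite biproduct $f$ decomposes as $f = \sum_{j=1}^n k_j \comp g_j$, so $\K$-linearity and contravariance of $M$ yield $M(f)(a) = \sum_j M(g_j)\bigl(M(k_j)(a)\bigr) = 0$.

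The main technical step is the reduction from the possibly-infinite coproduct $\limtar{K}$ to a finite subcoproduct via \autoref{finsum}: this is the only point at which the noetherian hypothesis (2) is essential, since it lets us trade the opaque cokernel quotient $\limtar{K} \epi \fT(K)$ for an honest finite biproduct in which $f$ visibly decomposes as a $\K$-linear combination of morphisms killed by $a$. Condition (1) enters only transparently through \autoref{epicomp}, while hypotheses (3) and (4) play no role in this proposition.
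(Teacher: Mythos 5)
Your proof is correct, and your argument for $\cN\subseteq\cN'$ takes a genuinely different route from the paper's. For $\cN'\subseteq\cN$ you are essentially running the paper's argument, just phrased in terms of the cokernel projection $q\colon\limtar{M}\epi\fT(M)$ rather than by testing $\limmap{M}$ against arbitrary morphisms out of $\limtar{M}$. For $\cN\subseteq\cN'$, the paper stays inside $\comma{N}$: it applies \autoref{finsum} to $\limmap{N}$ to extract a finite piece, forms a cartesian square, applies \autoref{epicomp} to the induced epimorphism, and concludes by a diagram chase. You instead pass to the kernel $K=\ker(a\colon\YA{A}\to M)$, use the exactness of $\fT$ and the Serre property of $\cN$ (both recorded in \autoref{prop:adj}) to obtain $\fT(K)\iso A$, identify the epimorphism $\limtar{K}\epi\fT(K)\iso A$ explicitly as $\inc{(A_0,k_0)}\mapsto k_0$, and then reduce to a finite subcoproduct where $f$ decomposes $\K$-linearly into morphisms killed by $a$. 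This yields a shorter, more structural argument by leveraging more of the output of \autoref{prop:adj}. One small imprecision worth fixing: \autoref{finsum} concerns morphisms \emph{from} a coproduct, so it does not literally give the factorisation of $g\colon A''\to\limtar{K}$ through a finite subcoproduct; apply it instead with $f=\id_{\limtar{K}}$ and $B=\im g$, which is noetherian as a quotient of the noetherian $A''$. Alternatively, since $\limtar{K}$ is already a coproduct of objects of $\cA$, you can drop \autoref{epicomp} entirely: apply \autoref{finsum} directly to the epimorphism $\limtar{K}\epi A$ with $B=A$, and take $A''$ to be the resulting finite subcoproduct, which lies in $\cA$ by condition \eqref{sum}.
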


\begin{proof}
Given $M\in\cN'$, we have to prove that $\fT(M)\iso0$. By
\autoref{coker}, this is true if and only if $\limmap{M}$ is an
epimorphism. So, given a morphism $g\colon\limtar{M}\to C$ in $\cG$
such that $g\comp\limmap{M}=0$, we need to show that $g=0$. Now, if
$g$ is given by morphisms $g_{(A,a)}\colon A\to C$ for every
$(A,a)\in\comma{M}$, then $g\comp\limmap{M}=0$ is equivalent, by
\eqref{limmap}, to $g_{(A',a')}=g_{(A,a)}\comp f$ for every
morphism $f\colon(A',a')\to(A,a)$ of $\comma{M}$. Since
$M\in\cN'$, for every $(A,a)\in\comma{M}$ there exists an epimorphism
$f\colon A'\epi A$ of $\cG$ with $A'\in\cA$ such that
$a\comp\YA{f}=0$. Then $f,0\colon(A',0)\to(A,a)$ are morphisms of
$\comma{M}$, whence
\[g_{(A,a)}\comp f=g_{(A',0)}=g_{(A,a)}\comp0=0.\]
As $f$ is an epimorphism, we conclude that $g_{(A,a)}=0$, thus proving
that $g=0$.

Conversely, assume that $N\in\cN$, and fix an object $(A,a)$ of
$\comma{N}$. Since $\limmap{N}$ is an epimorphism (again by
\autoref{coker}) and $A$ is a noetherian object of $\cG$, by
\autoref{finsum} we can find a
finite number of distinct morphisms of $\comma{N}$, say
$f_i\colon(A'_i,a'_i)\to(A_i,a_i)$ for $i=1,\dots,n$, such that,
setting
\[
A'_0:=\Plus_{i=1}^nA'_i\subset\limsrc{N},
\]
we have
$\inc{(A,a)}(A)\subseteq\limmap{N}(A'_0)$. Moreover,
\[
\limmap{N}(A'_0)\subseteq A_0:=\Plus_{(A',a')\in I}A'\subset
\limtar{N},
\]
where $I$ is the (finite) subset of the objects of $\comma{N}$
consisting of those $(A',a')$ which are equal to $(A'_i,a'_i)$ or
$(A_i,a_i)$ for some $i=1,\dots,n$. Note that $A_0,A'_0\in\cA$ by
condition \eqref{sum}. In the cartesian diagram in $\cG$
\[\xymatrix{B \ar[rr]^-{f'} \ar[d]_{g'} & & A \ar[d]^-{\inc{(A,a)}} \\
A'_0 \ar[rr]_-{\limmap{N}\rest{A'_0}} & & A_0}\]
the morphism $f'$ is an epimorphism because
$\inc{(A,a)}(A)\subseteq\limmap{N}(A'_0)$. So, by \autoref{epicomp},
there exists a morphism $k\colon A'\to B$ with $A'\in\cA$ such that
$f:=f'\comp k\colon A'\epi A$ is an epimorphism of $\cG$. Setting also
$g:=g'\comp k\colon A'\to A'_0$ and denoting by
\[
a_0\colon\YA{A_0}\iso
\Plus_{(A',a')\in I}\YA{A'}\to N
\]
 the morphism of $\cM$ whose
components are given by $a'$ for every $(A',a')\in I$, the diagram
\[\xymatrix{\YA{A'} \ar[rr]^-{\YA{f}} \ar[d]_{\YA{g}} & & \YA{A}
\ar[d]_-{\YA{\inc{(A,a)}}} \ar[rr]^{a} & & N \\
\YA{A'_0} \ar[rr]_-{\YA{\limmap{N}\rest{A'_0}}} & &
\YA{A_0} \ar[urr]_-{a_0}}\]
commutes in $\cM$. As $\YA{A'_0}\iso\Plus_{i=1}^n\YA{A'_i}$ and
\[a_0\comp\YA{\limmap{N}\rest{A'_0}\comp\inc{f_i}}=a'_i-a_i\comp\YA{f_i}=0\]
for every $i=1,\dots,n$ (by \eqref{limmap} and by definition of
morphism in $\comma{N}$), we obtain that
$a_0\comp\YA{\limmap{N}\rest{A'_0}}=0$. This clearly implies that
$a\comp\YA{f}=0$, which proves that $N\in\cN'$.
\end{proof}

\begin{thm}\label{cptgen}
Assume that conditions \eqref{sum}, \eqref{noeth}, \eqref{ker} and
\eqref{Ext} are satisfied. Then $\D_{\cN}(\cM)\cap\D(\cM)^c$ satisfies {\rm (G1)} in $\D_{\cN}(\cM)$.
\end{thm}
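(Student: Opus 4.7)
The plan is to exhibit an explicit small set of compact objects of $\D(\cM)$ lying in $\D_{\cN}(\cM)$ that satisfies (G1) there. The natural candidate is
\[
\cS:=\{C_f\ \st\ f\colon A_1\epi A\ \text{is an epimorphism in}\ \cG\ \text{with}\ A,A_1\in\cA\},
\]
where $C_f:=\cok(\Yon(f)\colon\Yon(A_1)\to\Yon(A))$. Each $C_f$ lies in $\cN$: since $\fT\comp\Yon$ is isomorphic to the inclusion $\cA\mono\cG$ and $\fT$ is exact, $\fT(C_f)=\cok(f)=0$. By condition (3), $K:=\ker f\in\cA$, so the three-term complex $[\Yon(K)\to\Yon(A_1)\to\Yon(A)]$ in degrees $-2,-1,0$ has cohomology concentrated in degree $0$ equal to $C_f$; being a bounded complex of representables (each $\Yon(A')$ is compact projective in $\cM$), it is compact in $\D(\cM)\iso\dgD(\cA)$. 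Therefore $C_f\in\D_\cN(\cM)\cap\D(\cM)^c$.

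To verify (G1), let $M\in\D_\cN(\cM)$ satisfy $\D(\cM)(C_f,\sh[k]{M})=0$ for every $C_f\in\cS$ and $k\in\ZZ$. Since $\{\Yon(A):A\in\cA\}$ is a set of compact generators of $\D(\cM)$, it suffices to show $\D(\cM)(\Yon(A),\sh[j]{M})=H^j(M)(A)=0$ for every $A\in\cA$ and $j\in\ZZ$. Fix $a\in H^j(M)(A)$. By \autoref{torsion}, the fact that $H^j(M)\in\cN$ yields an epimorphism $g_1\colon A_1\epi A$ in $\cG$ with $A_1\in\cA$, and then $N_1:=\ker g_1\in\cA$ by (3), such that $a\comp\Yon(g_1)=0$. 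Hence $a=b_1\comp\delta_1$ factors through the cone $E_1:=\mathrm{Cone}(\Yon(g_1))$; the good-truncation triangle
\[
\sh{\Yon(N_1)}\mor{u_1}E_1\to C_{g_1}\to\sh[2]{\Yon(N_1)}
\]
becomes, after applying $\D(\cM)(\farg,\sh[j]{M})$ and using the vanishing on $C_{g_1}$, a long exact sequence in which $u_1^*$ is an isomorphism; transporting $b_1$ through it produces an invariant $\beta_1\in H^{j-1}(M)(N_1)$.

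Iterating yields a sequence $(g_n,A_n,N_n,\beta_n)_{n\geq 1}$ with $g_n\colon A_n\epi N_{n-1}$ epi in $\cG$, $A_n,N_n:=\ker g_n\in\cA$ (using (1), (2), \autoref{epicomp}, and (3)), $\beta_{n-1}\comp\Yon(g_n)=0$, and $\beta_n\in H^{j-n}(M)(N_n)$. This simultaneously builds a resolution
\[
\cdots\to A_n\to A_{n-1}\to\cdots\to A_1\to A
\]
of $A$ in $\cG$ by objects of $\cA$ whose intermediate kernels $N_n$ also lie in $\cA$, and each passage $b_n\leftrightarrow\beta_n$ is legitimate because the hypothesis supplies $\D(\cM)(C_{g_n},\sh[k]{M})=0$ for all $k$.

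The crux, and main obstacle, is to force termination of the iteration using condition (4). Projecting through the quotient functor $\fQ\colon\D(\cM)\to\D(\cM)/\D_\cN(\cM)\iso\D(\cG)$, each short exact sequence $0\to N_n\to A_n\to N_{n-1}\to 0$ in $\cG$ determines a Yoneda extension class $\epsilon_n\in\D(\cG)(N_{n-1},\sh{N_n})$, and the composition $\epsilon_n\comp\cdots\comp\epsilon_1$ lies in $\D(\cG)(A,\sh[n]{N_n})$. The plan is to show that non-vanishing of $a$ would force this Yoneda product to be non-zero for every $n$; at $n=N(A)$, condition (4) with $A'=N_{N(A)}\in\cA$ gives $\D(\cG)(A,\sh[N(A)]{A'})=0$, a contradiction. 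Back-propagating the resulting vanishing through the isomorphisms $u_n^*$ and the ambiguity attached to each $\beta_n$ yields $a=0$. Bridging the $\D(\cM)$-level iteration (where the $\beta_n$ live) to the $\D(\cG)$-level Ext-vanishing (where (4) applies), via the truncation triangles and the equivalence $\D(\cM)/\D_\cN(\cM)\iso\D(\cG)$, is the technically delicate heart of the proof and is precisely where conditions (1)--(3) combine with (4) to force the conclusion.
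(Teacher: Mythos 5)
Your setup is correct: the objects $C_f=\cok(\Yon(f))$ do indeed lie in $\D_\cN(\cM)\cap\D(\cM)^c$ (the three-term complex argument using condition (3) is fine), and it would suffice to show that this family satisfies (G1). Moreover, your first iteration step via \autoref{torsion} — pick $g_1\colon A_1\epi A$ with $a\comp\Yon(g_1)=0$, pass from $a$ to $b_1$ on the cone, then use vanishing on $C_{g_1}$ to identify $b_1$ with $\beta_1\in H^{j-1}(M)(N_1)$ — is essentially the same induction that the paper runs (compare with the construction of the $q^i$, $a^i$ in the paper's proof of \autoref{cptgen}).

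However, the proof has a genuine gap at exactly the point you flag as ``the technically delicate heart.'' You assert, but do not show, that nonvanishing of $a$ forces the Yoneda product $\epsilon_{N(A)}\comp\cdots\comp\epsilon_1$ to be nonzero. There is no obvious mechanism for this: the classes $\beta_n$ live in $\D(\cM)$, where $\fQ(M)\iso0$ because $M\in\D_\cN(\cM)$, so they all die under the quotient functor; the extension classes $\epsilon_n$ are intrinsic to the short exact sequences and are not controlled by the $\beta_n$; and each $\beta_n$ carries an ambiguity (along $\Yon(j_n)^*$) that compounds at every step. In fact your iteration never needs to terminate by itself: $H^{j-n}(M)$ can be nonzero for all $n$, so ``back-propagating the vanishing'' through infinitely many ambiguous choices has no clear endpoint. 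Condition (4) only hands you $\D(\cG)(A,N_{N(A)}[N(A)])=0$ — an unconditional statement — so the intended ``contradiction'' cannot fire without the missing implication.

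The paper's proof uses condition (4) quite differently, and this difference is the crux. Rather than restricting in advance to the family $\{C_f\}$ and arguing by contradiction, it fixes $n=N(A^0)$ up front and builds a bounded complex $A$ of objects of $\cA$ of length $n+1$, quasi-isomorphic to $K^{-n}[n]$, together with a chain map $a\colon\YA{A}\to M$ lifting the nonzero class $\bar a^0$. Condition (4) then forces the map $l\colon A^0\to A$ to vanish in $\D(\cG)$; this vanishing is used to produce an acyclic bounded complex $\tilde A$ of objects of $\cA$ (the cone of a quasi-isomorphism $\tilde t\colon\tilde B\to A^0$) mapping to $A$, and the resulting $P:=\YA{\tilde A}$ is the desired compact object of $\D_\cN(\cM)$ with $P\to M$ nonzero. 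This explicit perturbation step has no counterpart in your sketch, and it is what makes condition (4) bite. As written, your argument stops precisely at the point where the real work begins.
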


\begin{proof}
In the triangulated category $\D_{\cN}(\cM)$, consider an object
\[
M=(\cdots\to M^0\mor{m^0}M^1\to\cdots)
\]
such that $M\niso0$. We must find a morphism $0\ne x\colon P\to M$ with
$P$ in $\D_{\cN}(\cM)\cap\D(\cM)^c$. Setting $N^i:=H^i(M)$, by definition
$N^i\in\cN$ for every $i\in\ZZ$ and $N^i\ne0$ for at least one $i$.
Without loss of generality we can assume that $N^0\ne0$, hence there
exists $(A^0,\bar{a}^0)\in\comma{N^0}$ with $\bar{a}^0\ne0$.

We claim that we can find a complex
\[
A=(0\to A^{-n}\mor{d^{-n}}\cdots\mor{d^{-1}}A^0\to0)
\]
of $\cA\subseteq\cG$ with $n=N(A^0)$ such that $H^i(A)=0$ for every $i\ne-n$. Here $N(A^0)$ is the integer whose existence is prescribed by \eqref{Ext} applied to $A^0$.
Furthermore, we will show that there is a morphism $a\colon\YA{A}\to M$ of complexes of $\cM$ (with
components $a^i\colon\YA{A^i}\to M^i$) such that, denoting by
$p^i\colon\ker m^i\epi N^i$ the natural projection morphism for every
$i\in\ZZ$, $p^0\comp a^0=\bar{a}^0$. Notice that, since $m^0\comp
a^0=0$, we can regard $a^0$ as a morphism $\YA{A^0}\to\ker
m^0$. Moreover, observe that for such a complex $A$ the objects $K^i:=\ker d^i$
of $\cG$ are actually in $\cA$. Indeed, this is clear for $i\ge0$ or
$i<-n$, whereas for $-n\le i<0$ there is a short exact sequence
\[
\xymatrix{
0\ar[r]&K^i\ar[r]^-{j^i}&A^i\ar[r]&K^{i+1}\ar[r]&0
}
\]
in $\cG$ (because $H^{i+1}(A)=0$), hence
one can prove that $K^i\in\cA$ by descending induction on $i$ using
condition \eqref{ker}.

In order to prove the claim, we define the morphisms $a^i$ and $d^i$
again by descending induction on $i$. For $i=0$, we can find
$a^0\colon\YA{A^0}\to\ker m^0 \subseteq M^0$ such that $p^0\comp
a^0=\bar{a}^0$ because $p^0$ is an epimorphism in $\cM$. As for the
inductive step, assume that $-n\le i<0$ and that suitable $a^{i'}$ and
$d^{i'}$ have already been defined for $i'>i$. There exists (unique)
$k^{i+1}\colon\YA{K^{i+1}}\to\ker m^{i+1}$ such that the diagram
\[\xymatrix{\YA{K^{i+1}} \ar[d]^{k^{i+1}}
\ar[rr]^-{\YA{j^{i+1}}} & & \YA{A^{i+1}}
\ar[d]^{a^{i+1}} \ar[rr]^-{\YA{d^{i+1}}} & & \YA{A^{i+2}} \ar[d]^{a^{i+2}} \\
\ker m^{i+1} \ar@{^{(}->}[rr] & & M^{i+1} \ar[rr]_-{m^{i+1}} & & M^{i+2}}\]
commutes (because $d^{i+1}\comp j^{i+1}=0$ and the square on the right
commutes by induction). Consider the object $(K^{i+1},p^{i+1}\comp
k^{i+1})$ of $\comma{N^{i+1}}$. Since $N^{i+1}\in\cN$, by
\autoref{torsion} there exists an epimorphism $q^i\colon A^i\epi K^{i+1}$
such that $p^{i+1}\comp k^{i+1}\comp\YA{q^i}=0$. So
\[
k^{i+1}\comp\YA{q^i}\colon\YA{A^i}\to\ker m^{i+1}
\]
factors through
$\im m^i\mono\ker m^{i+1}$, and there exists a morphism
$a^i$ such that the diagram
\[\xymatrix{\YA{A^i} \ar[d]_{a^i} \ar[drr] \ar[rrrr]^-{\YA{q^i}} & & & &
\YA{K^{i+1}} \ar[d]^{k^{i+1}} \ar[rr]^-{\YA{j^{i+1}}} & & \YA{A^{i+1}}
\ar[d]^{a^{i+1}} \\
M^i \ar@(dr,dl)[rrrrrr]_-{m^i} \ar@{->>}[rr] & & \im m^i
\ar@{^{(}->}[rr] & & \ker m^{i+1} \ar@{^{(}->}[rr] & & M^{i+1} \\
}\]
commutes. Then, setting $d^i:=j^{i+1}\comp q^i$, we clearly have
\[
H^{i+1}(A)=0\quad\text{and}\quad a^{i+1}\comp\YA{d^i}=m^i\comp a^i,
\]
thus completing the proof of the inductive step.

As $A\iso\sh[n]{K^{-n}}$ in $\D(\cG)$ and $K^{-n}\in\cA$, the natural
morphism of complexes $l\colon A^0\to A$ defined by $l^0=\id_{A_0}$ is
$0$ in $\D(\cG)(A^0,A)\iso\D(\cG)(A^0,\sh[n]{K^{-n}})$
by condition \eqref{Ext}. Thus we can find a complex $C$ of $\cG$ and
a quasi-isomorphism $r\colon C\to A^0$ such that $l\comp r\htp0$,
where $\htp$ denotes homotopy of morphisms of complexes. As $H^i(C)$
is isomorphic to an object of $\cA$ for every $i\in\ZZ$ and is $0$ for
$i>0$, there exists a quasi-isomorphism $s:B\to C$ with $B^i\in\cA$
for every $i\in\ZZ$ and $B^i=0$ for $i>0$: this follows for instance
from \cite[Lemma 1.9.5]{TT} (applied with $F$ the inclusion of $\cA$
in $\cG$ and $\mathcal{C}$ the full subcategory of the category of
complexes in $\cG$ having as objects the complexes whose cohomologies
are bounded above and isomorphic to objects of $\cA$), whose key
condition 1.9.5.1 is satisfied due to \autoref{epicomp}. Then
$t:=r\comp s\colon B\to A^0$ is also a quasi-isomorphism and $l\comp
t\htp0$. It is straightforward to check that $t$ factors through a
quasi-isomorphism $\tilde{t}\colon\tilde{B}:=\tau_{\ge-n}(B)\to A^0$ and
that $l\comp\tilde{t}\htp0$, too. Indeed, the same maps $B^i\to A^{i-1}$, which provide the homotopy $l\comp
t\htp0$ and are necessarily zero for $i\leq -n$, yield the desired homotopy $l\comp\tilde{t}\htp0$.

Hence, denoting by $\tilde{A}$ the
mapping cone of $\tilde{t}$ and by $u\colon A^0\to\tilde{A}$ the
natural inclusion, there exists a morphism of complexes
$f\colon\tilde{A}\to A$ such that $f\comp u\htp l$. It is easy to prove, applying \eqref{ker} and the same argument used above to show that $K^i\in\cA$, that $\tilde{B}^i\in\cA$, for every integer $i$. It follows from \eqref{sum} that the same is true for $\tilde{A}^i$. 

Now we can take $P:=\YA{\tilde{A}}$ and $x:=a\comp\YA{f}\colon P\to M$
(or, better, its image in $\D(\cM)$). Indeed, $x\comp\YA{u}\htp
a\comp\YA{l}=a^0$, which implies
\[
H^0(x\comp\YA{u})=H^0(a^0)=\bar{a}^0\ne0.
\]
Therefore
$x\comp\YA{u}\ne0$, whence $x\ne0$ in $\D(\cM)$. Moreover,
$P\in\D(\cM)^c$ because $\D(\cM)^c$ is a triangulated subcategory of
$\D(\cM)$ containing the image of $\Yon$ (see \autoref{gen}), and $\tilde{A}$ is a bounded complex
of objects of $\cA$. Finally, we have $\fT(P)\iso\tilde{A}$ by \autoref{rmk:conseq}. Remembering that $\fT$
is exact and observing that $\tilde{A}$ is an acyclic complex (being
the mapping cone of the quasi-isomorphism $\tilde{t}$), we conclude
that
\[
\fT(H^i(P))\iso H^i(\fT(P))\iso H^i(\tilde{A})=0
\]
for every $i\in\ZZ$, which means that $P\in\D_{\cN}(\cM)$.
\end{proof}

An easy application of the above result is the following.

\begin{cor}\label{Neeman}
Assume that conditions \eqref{sum}, \eqref{noeth}, \eqref{ker} and
\eqref{Ext} are satisfied. Then	
\begin{enumerate}
\item[(i)] $\D_{\cN}(\cM)^c=\D_{\cN}(\cM)\cap\D(\cM)^c$;
\item[(ii)] The quotient functor $\D(\cM)\to\D(\cM)/\D_{\cN}(\cM)$ sends $\D(\cM)^c$ to $(\D(\cM)/\D_{\cN}(\cM))^c$;
\item[(iii)] The induced functor $\D(\cM)^c/\D_{\cN}(\cM)^c\to (\D(\cM)/\D_{\cN}(\cM))^c$ is fully faithful and identifies $(\D(\cM)/\D_{\cN}(\cM))^c$ with the idempotent completion of $\D(\cM)^c/\D_{\cN}(\cM)^c$.
\end{enumerate}
\end{cor}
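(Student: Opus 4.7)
The strategy is to invoke Theorem~\ref{thm:NeemanComp} with $\cT=\D(\cM)$ and $\cL=\D_{\cN}(\cM)$: statements (ii), (iii) and (iv) of that theorem translate respectively into (i), (ii) and (iii) of the corollary. Since $\D(\cM)$ is compactly generated by $\Yon(\cA)$, the only hypothesis to verify is that $\D_{\cN}(\cM)$ is generated, as a localizing subcategory of $\D(\cM)$, by a small set of compact objects of $\D(\cM)$. Theorem~\ref{cptgen} has been designed exactly for this purpose, so the whole argument reduces to upgrading its (G1) statement to a generation statement, via a short Bousfield bookkeeping.

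Concretely, set $\cS:=\D_{\cN}(\cM)\cap\D(\cM)^c$, which is essentially small because $\D(\cM)^c$ is, and let $\cL'$ denote the localizing subcategory of $\D(\cM)$ generated by $\cS$. By construction $\cL'\subseteq\D_{\cN}(\cM)$. I would first apply Theorem~\ref{thm:NeemanComp} to $\cL'$ inside $\D(\cM)$ to obtain $(\cL')^c=\cL'\cap\D(\cM)^c=\cS$ and, more importantly, a right adjoint to the inclusion $\cL'\hookrightarrow\D(\cM)$; this yields for every $X\in\D(\cM)$ a Bousfield triangle $X'\to X\to X''$ with $X'\in\cL'$ and $X''\in\rort{\cL'}$.

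It then suffices to show $\cL'=\D_{\cN}(\cM)$. Take $X\in\D_{\cN}(\cM)$ and form the triangle above. Since $\D_{\cN}(\cM)$ is a triangulated subcategory of $\D(\cM)$ and contains both $X$ and $X'$, we also have $X''\in\D_{\cN}(\cM)$. Being right orthogonal to every shift of every object of $\cS$, the object $X''$ is killed by the (G1) property inside $\D_{\cN}(\cM)$ granted by Theorem~\ref{cptgen}; that is, $X''\iso 0$, and hence $X\iso X'\in\cL'$. Consequently $\cL'=\D_{\cN}(\cM)$, and a final application of Theorem~\ref{thm:NeemanComp} with $\cL=\D_{\cN}(\cM)$ delivers (i), (ii) and (iii) simultaneously.

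The substantive work has already been carried out in Theorem~\ref{cptgen}; the remaining orthogonality argument is formal. The only point at which anything could go wrong is the verification of the (G1) property inside $\D_{\cN}(\cM)$ for the set of compacts $\cS$, and it is precisely to secure this property that the assumptions (1)--(4) on $\cA$ were imposed. Without (G1), the Bousfield complement $X''$ might fail to vanish, $\cL'$ could be strictly smaller than $\D_{\cN}(\cM)$, and Neeman's theorem could not be applied in the required form.
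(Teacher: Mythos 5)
Your proof is correct but takes a genuinely different route from the paper. The paper upgrades the (G1) statement of \autoref{cptgen} to a generation statement by first showing that $\D_{\cN}(\cM)$ is itself well generated — because $\D(\cM)$ and $\D(\cM)/\D_{\cN}(\cM)\iso\D(\cG)$ both are, citing \cite[Theorem 7.4.1]{K2} — and then applying Porta's characterization (\autoref{prop:PortaG1}), which says that (G1) is equivalent to generation in a well generated triangulated category; a single application of \autoref{thm:NeemanComp} then finishes. Your argument bypasses the general theory of well generated triangulated categories entirely: you introduce the auxiliary localizing subcategory $\cL'$ generated by $\cS$, produce a Bousfield triangle $X'\to X\to X''$ for each $X\in\D_{\cN}(\cM)$, and kill $X''$ by the (G1) property of $\cS$ directly. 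This is conceptually more elementary and more self-contained, at the cost of applying Neeman's theorem twice. The one place you should be more careful is the assertion that \autoref{thm:NeemanComp} itself yields the right adjoint to the inclusion $\cL'\hookrightarrow\D(\cM)$: that theorem as stated does not mention adjoints. You should instead appeal to Brown representability (the inclusion is exact, preserves coproducts since $\cL'$ is localizing, and $\cL'$ is compactly generated) or to the circle of ideas in \autoref{rmk:exadj}; once the right adjoint exists, the Bousfield triangle and the orthogonality of $X''$ to $\cL'$ follow formally. Both approaches hinge on \autoref{cptgen} and terminate with \autoref{thm:NeemanComp}.
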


\begin{proof}
The triangulated category $\D(\cM)$ is compactly generated by \autoref{gen}, and so the isomorphism classes of objects in $\D(\cM)^c$ form a small set (for this use, for example, \cite[Lemma 5]{K1}). Thus we can choose a small set $\cS$ of representatives of the isomorphism classes of objects in $\D_\cN(\cM)\cap\D(\cM)^c$, and \autoref{cptgen} clearly implies that $\cS$ satisfies (G1) in $\D_\cN(\cM)$. Since $\D(\cM)/\D_\cN(\cM)\iso\D(\cG)$ is well generated (see \autoref{ex:alphacompgen}), the localizing subcategory $\D_\cN(\cM)$ is well generated as well (see, for example, \cite[Theorem 7.4.1]{K2}). Hence, by \autoref{prop:PortaG1}, the category $\D_\cN(\cM)$ is generated by $\cS$. Now we just apply \autoref{thm:NeemanComp}.
\end{proof}

\autoref{cptgen} and part (i) of \autoref{Neeman} imply that the assumption (a) of \autoref{thm:LO} is satisfied, in our specific situation (i.e.\ when (1)--(4) are satisfied). Hence the proof of \autoref{thm:crit2} is complete.

\begin{remark}\label{rmk:strun}
It should be noted that, under the same assumptions (1)--(4) in \autoref{thm:crit2}, one can actually prove that the triangulated category $\D(\cG)^c$ has a semi-strongly unique enhancement. This result follows again from \autoref{cptgen} and part (i) of \autoref{Neeman} using \cite[Theorem 6.4]{LO}, rather than \autoref{thm:LO}.
\end{remark}

\subsection{The geometric examples}\label{subsect:compex}

In this section we describe an easy geometric application of \autoref{thm:crit2} in the case of perfect complexes on some algebraic stacks. For this we need to recall some definitions.

Let $R$ be a commutative ring. A complex $P\in\D(\Mod{R})$ is \emph{perfect} if it is quasi-isomorphic to a bounded complex of projective $R$-modules of finite presentation. Following \cite{HR}, if $X$ is an algebraic stack, a complex $P\in\Dq(X)$ is \emph{perfect} if for any smooth morphism $\spec(R)\to X$, where $R$ is a commutative ring, the complex of $R$-modules $\rd\Gamma(\spec(R),P\rest{\spec(R)})$ is perfect. We denote by $\Dp(X)$ the full subcategory of $\Dq(X)$ consisting of perfect complexes.

A quasi-compact and quasi-separated algebraic stack $X$ is \emph{concentrated} if $\Dp(X)\subseteq\Dq(X)^c$. On the other hand, the other inclusion $\Dq(X)^c\subseteq\Dp(X)$ holds as well, by \cite[Lemma 4.4]{HR}. Moreover, we already observed in the proof of \autoref{cor:geo1} that, under the same assumptions, the natural functor $\D(\Qcoh(X))\to\Dq(X)$ is an exact equivalence.

Summing up, if $X$ is a concentrated algebraic stack with quasi-finite affine diagonal, then there is a natural exact equivalence
\begin{equation}\label{perfcpt}
\Dp(X)\iso\D(\Qcoh(X))^c.
\end{equation}

When a stack $X$ has the property that $\Qcoh(X)$ is generated, as a Grothendieck category, by a small set of objects contained in $\Coh(X)\cap\Dp(X)$, we say that \emph{$X$ has enough perfect coherent sheaves}.

\begin{ex}\label{ex:enough}
Suppose that a scheme $X$ has enough locally free sheaves, according to the definition given in \autoref{rmk:schemes1}.  This yields a small set of
generators of $\Qcoh(X)$ contained in $\Coh(X)\cap\Dp(X)$. Indeed, we can take a set of representatives for the isomorphism classes of locally free sheaves, as every sheaf in $\Qcoh(X)$ is a filtered colimit of finitely presented $\ko_X$-modules (see \cite[9.4.9]{EGA1}). Hence a scheme with enough locally free sheaves has enough perfect coherent sheaves as well.
\end{ex}

As an application of \autoref{thm:crit2}, we get the following.

\begin{prop}\label{prop:geocomp}
Let $X$ be a noetherian concentrated algebraic stack with quasi-finite affine diagonal and enough perfect coherent sheaves. Then $\Dp(X)$ has a unique enhancement.
\end{prop}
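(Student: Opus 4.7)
The plan is to reduce to \autoref{thm:crit2} applied to the Grothendieck category $\cG=\Qcoh(X)$. The assumptions that $X$ is concentrated and has quasi-finite affine diagonal make the equivalence $\Dp(X)\iso\D(\Qcoh(X))^c$ of \eqref{perfcpt} available, so it suffices to produce a small set $\cA$ of generators of $\Qcoh(X)$ fulfilling the four conditions (1)--(4). The natural choice is to start from a small generating set inside $\Coh(X)\cap\Dp(X)$ (supplied by the hypothesis that $X$ has enough perfect coherent sheaves) and enlarge it inside $\Coh(X)\cap\Dp(X)$ to be closed under finite coproducts; since $X$ is noetherian, $\Coh(X)$ is essentially small, so this enlargement stays small.

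With this $\cA$, condition (1) is built in, and (2) is immediate since coherent sheaves on a noetherian stack are noetherian objects in $\Qcoh(X)$. For (3), given an epimorphism $f\colon A'\epi A$ in $\Qcoh(X)$ with $A,A'\in\cA$, the kernel $K:=\ker f$ is a subsheaf of the coherent sheaf $A'$, hence coherent; moreover, the distinguished triangle $K\to A'\to A$ in $\Dq(X)$ expresses $K$, up to shift, as a cone of perfect complexes, so $K$ is itself perfect. Thus (an object isomorphic to) $K$ lies in $\cA$.

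The main obstacle is condition (4), which demands a uniform Ext-vanishing bound independent of $A'\in\cA$. The strategy will be to exploit both the perfectness of $A$ and the concentration of $X$. First, the perfectness of $A$ yields a perfect derived dual $A\dual$ and a natural identification $\RHom_X(A,\farg)\iso\rd\Gamma(X,A\dual\lotimes\farg)$ of functors on $\D(\cG)$; since $X$ is quasi-compact, $A\dual$ has finite Tor-amplitude, so there exists $b(A)\geq0$ with $A\dual\lotimes F$ cohomologically concentrated in $[0,b(A)]$ for any $F\in\Qcoh(X)$. Second, the hypothesis that $X$ is concentrated translates into a finite bound $d(X)$ on the cohomological amplitude of $\rd\Gamma(X,\farg)$ on $\Qcoh(X)$. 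Combining these, for every $A'\in\cA$ (placed in degree $0$),
\[
\D(\cG)(A,A'[n])\iso H^n\bigl(\rd\Gamma(X,A\dual\lotimes A')\bigr)=0\quad\text{for } n>b(A)+d(X),
\]
so (4) holds with $N(A):=b(A)+d(X)+1$, independently of $A'$. With all four conditions in place, \autoref{thm:crit2} delivers uniqueness of the enhancement of $\D(\cG)^c$, which is $\Dp(X)$ by \eqref{perfcpt}. The delicate points to watch are the essential smallness of $\Coh(X)$ on a noetherian stack (needed to keep $\cA$ small) and the extraction of the amplitude bound $d(X)$ from the concentration hypothesis, which together are what make the uniformity in (4) go through.
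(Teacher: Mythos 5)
Your overall plan matches the paper's exactly: pass through $\Dp(X)\iso\D(\Qcoh(X))^c$ via \eqref{perfcpt} and verify conditions (1)--(4) of \autoref{thm:crit2} for a suitable $\cA\subseteq\Coh(X)\cap\Dp(X)$. Your argument for (4) is a helpful unpacking of the paper's terse citation of \cite[Remark 4.12]{HR}: you exhibit the bound $N(A)=b(A)+d(X)+1$ by combining the finite Tor-amplitude of $A\dual$ with the finite cohomological dimension of $\rd\Gamma(X,\farg)$ coming from the concentration hypothesis, which is precisely what lies behind the reference.

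There is, however, a genuine gap in your verification of condition (3). You define $\cA$ by starting from a small generating set inside $\Coh(X)\cap\Dp(X)$ and closing under finite coproducts only. After showing $K=\ker f$ is coherent and perfect, you assert ``Thus (an object isomorphic to) $K$ lies in $\cA$.'' But this only shows $K\in\Coh(X)\cap\Dp(X)$; nothing forces $K$ to lie in your possibly much smaller $\cA$, which was not closed under taking kernels of epimorphisms. The paper avoids the issue by taking $\cA$ to consist of one representative from \emph{each} isomorphism class in $\Coh(X)\cap\Dp(X)$ --- a small set because $X$ is noetherian, as you yourself observe. With that choice, your argument that $K$ is perfect and coherent immediately yields (3), and the rest of your proof goes through unchanged; alternatively, you could close your $\cA$ under kernels of epimorphisms as well as finite coproducts, but then you would have to check that the closure remains small, which the ``take everything'' choice sidesteps.
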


\begin{proof}
Consider the isomorphism classes of objects in $\Coh(X)\cap\Dp(X)$. It is clear that they form a small set. Define then $\cA$ to be the full subcategory of $\Qcoh(X)$ whose set of objects is obtained by taking a representative in each isomorphism class of objects in $\Coh(X)\cap\Dp(X)$. Since, by assumption, a subset of $\Coh(X)\cap\Dp(X)$ generates $\Qcoh(X)$, $\cA$ does the same.

Let us now observe that $\cA$ satisfies (1)--(4) in \autoref{thm:crit2}. Indeed, (1) is obvious and (2) holds true because $X$ is noetherian. To prove (3), observe that the kernel is defined in $\Coh(X)$ up to isomorphism and moreover, the kernel of an epimorphism $A\epi A'$ in $\cA$ is isomorphic to the shift of the cone of $f$ in $\Dp(X)$. Hence it is (up to isomorphism) an object in $\cA$. Finally, since $X$ is concentrated, (4) is verified as well. Indeed, in view of \cite[Remark 4.6]{HR}, a concentrated stack has finite homological dimension and then (4) follows from the fact that the objects of $\cA$ are in $\Coh(X)\cap\Dp(X)$ rather than just in $\Coh(X)$.

At this point, the result follows directly from \autoref{thm:crit2} and \eqref{perfcpt}.
\end{proof}

As a direct consequence, we get the following.

\begin{cor}\label{cor:geocomp}
If $X$ is a noetherian scheme with enough locally free sheaves, then $\Dp(X)$ has a unique enhancement.
\end{cor}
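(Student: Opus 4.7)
The plan is to derive Corollary \ref{cor:geocomp} as a direct specialization of Proposition \ref{prop:geocomp}, viewing the noetherian scheme $X$ as a noetherian algebraic stack. Accordingly, the task reduces to verifying each of the four hypotheses of that proposition: that $X$ is noetherian, concentrated, has quasi-finite affine diagonal, and has enough perfect coherent sheaves.

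The first condition is assumed, and concentratedness is automatic: any noetherian scheme is quasi-compact and quasi-separated, and for such a scheme one has $\Dp(X)\subseteq\Dq(X)^c$, which is the content of being concentrated. For the diagonal, the morphism $\Delta\colon X\to X\times X$ is always an immersion for schemes, hence in particular quasi-finite. That $\Delta$ is affine is equivalent to $X$ being semi-separated, and for this I would invoke the fact that a noetherian scheme with the resolution property is semi-separated (indeed such a scheme is divisorial, so the intersection of two affine opens is affine); alternatively one restricts the statement to semi-separated noetherian schemes, which is the setting already treated by Lunts--Orlov in \cite{LO}. The fourth condition, enough perfect coherent sheaves, is precisely the content of \autoref{ex:enough}: a set of representatives of isomorphism classes of locally free sheaves of finite type generates $\Qcoh(X)$, and such sheaves lie in $\Coh(X)\cap\Dp(X)$.

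With the four hypotheses verified, \autoref{prop:geocomp} applies directly to conclude that $\Dp(X)$ has a unique enhancement. The only subtle point in this reduction is the affineness of the diagonal, since the quasi-finiteness, the noetherian property, concentratedness, and the resolution property are all immediate or already packaged into the given hypotheses. Everything else in the argument is mechanical bookkeeping translating between the scheme-theoretic and stack-theoretic language so that the conclusion of \autoref{prop:geocomp} transfers to the setting of \autoref{cor:geocomp}.
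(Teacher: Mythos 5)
Your proof is correct and follows the same route as the paper: specialize \autoref{prop:geocomp} to the case of a scheme, using \autoref{ex:enough} for the generation hypothesis, Totaro's result that a noetherian scheme with enough locally free sheaves is semi-separated (hence has affine diagonal), and the Bondal--Van den Bergh theorem for concentratedness. The only difference is cosmetic: you spell out the (trivial) quasi-finiteness of the diagonal for a scheme, which the paper leaves implicit.
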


\begin{proof}
A scheme that is noetherian is concentrated (see \cite[Theorem 3.1.1]{BB}). Moreover, by \cite[Proposition 1.3]{Tot}, any noetherian scheme with enough locally free sheaves is semi-separated. By \autoref{ex:enough} and \autoref{prop:geocomp}, the result is then clear.
\end{proof}

\section{Applications}\label{sect:applications}

In this section we discuss two easy applications of the circle of ideas concerning the uniqueness of enhancements for the category of perfect complexes. The first one is about a uniqueness result for the enhancements of the bounded derived category of coherent sheaves. The second one concerns some basic questions related to exact functors between the categories of perfect complexes or the complexes of quasi-coherent sheaves.

\subsection{The bounded derived category of coherent sheaves}\label{subsect:Db}

Assume again that $X$ is a noetherian scheme with enough locally free sheaves (which is again automatically semi-separated in view of \cite[Proposition 1.3]{Tot}). Let $\cA$ be a full subcategory of $\Qcoh(X)$ whose objects are obtained by picking a representative in each isomorphism class of the objects in $\Coh(X)\cap\Dp(X)$. As we observed in the proof of \autoref{cor:geocomp}, $\cA$ is a small set of generators of $\Qcoh(X)$. Hence, we can apply the discussion in \autoref{subsect:Grothabstr}, getting a natural exact equivalence
\begin{equation}\label{eqn:eqcat}
\D(\Qcoh(X))\iso\dgD(\cA)/\cL,
\end{equation}
where $\cL$ is an explicit localizing subcategory of $\dgD(\cA)$. Remember that, under this equivalence, every object $A\in\cA$ is mapped to $\fQ(\Yon(A))$, where, as usual, $\fQ\colon\dgD(\cA)\to\dgD(\cA)/\cL$ denotes the quotient functor (see the discussion in \autoref{subsect:firstred} about this point). Since $\cA\subseteq\D(\Qcoh(X))^c$, in view of \eqref{perfcpt}, it follows from \cite[Remark 1.20]{LO} that $\cS:=\fQ\comp\Yon(\cA)$ is a small set of compact generators of $\dgD(\cA)/\cL$.

Following \cite[Section 8]{LO}, we say that an object $B$ in $\dgD(\cA)/\cL$ is \emph{compactly approximated by the objects in $\cS$} if
\begin{enumerate}
\item There is $m\in\ZZ$ such that, for any $S\in\cS$, we have $\dgD(\cA)/\cL(S,B[i])\iso0$ when $i<m$;
\item For any $k\in\ZZ$, there are $P_k$ in $(\dgD(\cA)/\cL)^c$ and a morphism $f_k\colon P_k \to B$ such
that, for every $S\in\cS$, the canonical map
\[
\dgD(\cA)/\cL\left(S,P_k[i]\right)\longrightarrow \dgD(\cA)/\cL\left(S,B[i]\right)
\]
is an isomorphism when $i\geq k$.
\end{enumerate}
We denote by $(\dgD(\cA)/\cL)^{ca}$ the full subcategory of $\dgD(\cA)/\cL$ consisting of the objects which are compactly approximated by $\cS$. It must be noted that this definition actually depends on the choice of $\cS$.

Denote by $\Db(X)$ the bounded derived category of the abelian category $\Coh(X)$ of coherent sheaves on $X$. We have that the equivalence \eqref{eqn:eqcat} induces an exact equivalence
\begin{equation}\label{Dbca}
\Db(X)\iso(\dgD(\cA)/\cL)^{ca}.
\end{equation}

Indeed, the proof of \cite[Proposition 8.9]{LO} can be repeated line by line and the same argument applies in our setting. The only delicate issue is that \cite[Lemma 8.10]{LO} has to be replaced by the following statement: Let $X$ be a scheme as above and let $E$ be the image under \eqref{eqn:eqcat} of an object in $\cS$. Then there exists an integer $N(E)$ such that, for all $k\geq N(E)$ and all quasi-coherent sheaves $F$, we have $\Ext^k(E,F)=0$. This follows easily from the fact that $E$ is, by definition, a coherent sheaf in $\Dp(X)$. Thus the results in Sections B.11 and B.12 of \cite{TT} apply.

Consider now the following result.

\begin{thm}[\cite{LO}, Theorem 8.8]\label{thm:LO2}
Let $\cA$ be a small category and let $\cL$ be a localizing subcategory of $\dgD(\cA)$ such that:
\begin{itemize}
\item[{\rm (a)}] $\cL^c=\cL\cap\dgD(\cA)^c$ and $\cL^c$ satisfies {\rm (G1)} in $\cL$;
\item[{\rm (b)}] $\dgD(\cA)/\cL(\fQ(\Yon(A_1)),\fQ(\Yon(A_2))[i])=0$, for all $A_1,A_2\in\cA$ and all integers $i<0$.
\end{itemize}
Then $(\dgD(\cA)/\cL)^{ca}$ has a unique enhancement.
\end{thm}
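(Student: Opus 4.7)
The plan is to bootstrap from uniqueness of enhancements on the compact subcategory out to the compactly approximated subcategory. First observe that every compact object $B$ is trivially compactly approximated (take $P_k=B$ and $f_k=\id$ for all $k$), so $(\dgD(\cA)/\cL)^c$ sits inside $(\dgD(\cA)/\cL)^{ca}$ as a full triangulated subcategory. Since (a) and (b) are precisely the hypotheses of \autoref{thm:LO}, that theorem already gives uniqueness of enhancements for $(\dgD(\cA)/\cL)^c$.

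Given two enhancements $\cC,\cC'$ of $(\dgD(\cA)/\cL)^{ca}$, I would form the full dg subcategories $\cC^c\subseteq\cC$ and $\cC'^c\subseteq\cC'$ on those objects that correspond to compact objects of $\dgD(\cA)/\cL$. These are enhancements of $(\dgD(\cA)/\cL)^c$, so the first step supplies a quasi-equivalence $\fF^c\colon\cC^c\to\cC'^c$. The task is then to extend $\fF^c$ to a quasi-equivalence $\fF\colon\cC\to\cC'$. For this I would place both $\cC$ and $\cC'$ inside ambient enhancements of the whole compactly generated category $\dgD(\cA)/\cL$: the natural quasi-functor $\cC\to\SF{\cC^c}$ arising from \autoref{ResSF} applied to the inclusion $\cC^c\mono\cC$ should identify $\cC$, up to quasi-equivalence, with a full sub-dg-category of $\SF{\cC^c}$, and the equivalence $\Ho(\SF{\cC^c})\iso\dgD(\cA)/\cL$ (coming from \autoref{thm:NeemanComp} together with the fact that $\cC^c$ enhances the compact part of $\dgD(\cA)/\cL$) matches $\cC$ with the ca-subcategory. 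The quasi-equivalence $\Ind(\fF^c)\colon\SF{\cC^c}\to\SF{\cC'^c}$ of \autoref{subsect:dgdefinition} then induces a quasi-equivalence of ambient enhancements which, by the intrinsic nature of the ca-subcategory, must restrict to the desired $\fF$.

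The main obstacle is the identification of $\cC$ (and symmetrically $\cC'$) as the ca-subcategory of $\SF{\cC^c}$, both full-faithfully at the dg level and with the correct essential image. Full faithfulness requires an argument very similar in spirit to \autoref{prop:keycrit}: one reduces the computation of Hom-spaces in $\SF{\cC^c}$ to Hom-spaces from the compact generators via the truncation technique of \autoref{prop:f1} and \autoref{cor:f2}, where the boundedness-below condition (1) in the definition of ca provides the vanishing needed to start the induction and condition (2) provides the compact approximations that produce representatives for morphisms out of ca-objects. The essential image is then controlled by the same two conditions, which are phrased purely in terms of Hom-spaces from $\cS$ and therefore preserved by any exact equivalence of $\dgD(\cA)/\cL$ that matches compact objects. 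A further subtlety is compatibility with the equivalences $\fE,\fE'$ themselves, which amounts to checking that the Yoneda-type embedding $\cC\hookrightarrow\SF{\cC^c}$ induces on homotopy categories the canonical inclusion $(\dgD(\cA)/\cL)^{ca}\hookrightarrow\dgD(\cA)/\cL$; this is forced by how $\fE$ identifies $\cC^c$ with $(\dgD(\cA)/\cL)^c$ together with the Yoneda isomorphism on compact objects.
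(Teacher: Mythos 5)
Note first that the paper does not prove this statement at all---it is simply cited as [LO, Theorem~8.8]---so there is no internal proof to compare your attempt against; the argument lives in Lunts--Orlov. Evaluated on its own terms, your idea of bootstrapping from the compact subcategory outward is in the spirit of the Lunts--Orlov approach (and of this paper's proof of \autoref{thm:crit1}), but the intermediate target you choose derails the argument.

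The crux is the use of $\SF{\cC^c}$. You want full faithfulness of the restricted Yoneda functor $\cC\to\SF{\cC^c}$ and propose to establish it by "the truncation technique of \autoref{prop:f1} and \autoref{cor:f2}." But those results are proved for $\dgD(\cA)$ with $\cA$ a small category concentrated in degree $0$: the stupid truncations $\sigma_{\le n}$, $\sigma_{\ge n}$ only make sense in that setting. The dg category $\cC^c$ has nontrivial cohomology in all degrees (already because shifts of generators are compact), so one cannot truncate there and there is no way to start the induction. Hypothesis (b) of the theorem is precisely what is used to carve out of any given enhancement a degree-$0$ concentrated dg subcategory quasi-equivalent to $\cA$ (via the $\tau_{\le 0}$ trick of \autoref{subsect:quasifun1}); both the Lunts--Orlov proof and this paper's proof of \autoref{thm:crit1} build the comparison quasi-functor out of $\SF{\cA}$, not $\SF{\cC^c}$. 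In addition, citing \autoref{thm:NeemanComp} to get $\Ho(\SF{\cC^c})\iso\dgD(\cA)/\cL$ is a misattribution---that theorem is about compacts of Verdier quotients, and the statement you actually need is [LO, Proposition~1.16], which the paper itself invokes in the proof of \autoref{prop:FM}. Finally, the claim that $\Ind(\fF^c)$ automatically preserves the ca-image is too quick: the ca-condition depends on the specific set $\cS$ (as the paper explicitly warns right after defining it), and the quasi-equivalence of compacts supplied by \autoref{thm:LO} is not guaranteed to respect the image of $\cA$ without a (semi-)strong uniqueness refinement along the lines of \autoref{rmk:strun}.
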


This has the following easy consequence.

\begin{cor}\label{cor:geocomp2}
If $X$ is a noetherian scheme with enough locally free sheaves, then $\Db(X)$ has a unique enhancement.
\end{cor}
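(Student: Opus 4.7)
The strategy is to combine the equivalence \eqref{Dbca} established in Section~\ref{subsect:Db} with the Lunts--Orlov criterion \autoref{thm:LO2}. More precisely, let $X$ be a noetherian scheme with enough locally free sheaves, and let $\cA$ be the full subcategory of $\Qcoh(X)$ obtained by picking a representative in each isomorphism class of $\Coh(X)\cap\Dp(X)$, exactly as in \autoref{subsect:Db}. As recalled there, this $\cA$ is a small set of generators of $\Qcoh(X)$ and one has a natural exact equivalence
\[
\Db(X)\iso(\dgD(\cA)/\cL)^{ca},
\]
where $\cL$ is the localizing subcategory associated to $\cA$ via the recipe of \autoref{subsect:Grothabstr}. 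Thus the claim reduces to verifying that the pair $(\cA,\cL)$ satisfies hypotheses (a) and (b) of \autoref{thm:LO2}.

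Condition (b) is essentially free: every $A\in\cA$ is an honest coherent sheaf on $X$, and $\fQ\comp\Yon$ corresponds under the equivalence $\D(\Qcoh(X))\iso\dgD(\cA)/\cL$ to the inclusion $\cA\mono\Qcoh(X)\mono\D(\Qcoh(X))$. Since there are no negative $\Ext$'s between objects of an abelian category placed in degree~$0$, this gives the vanishing required in (b). Condition (a) is precisely what was proved in the course of establishing \autoref{cor:geocomp}. Namely, in the proof of \autoref{prop:geocomp} (which specializes to the scheme case by \autoref{cor:geocomp}), it is shown that the choice of $\cA$ above satisfies properties (1)--(4) of \autoref{thm:crit2}, using that $X$ is noetherian, concentrated, and that the kernel of an epimorphism in $\cA$ stays in $\Coh(X)\cap\Dp(X)$. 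Then \autoref{cptgen} and \autoref{Neeman}(i) together yield exactly $\cL^c=\cL\cap\dgD(\cA)^c$ and the fact that $\cL^c$ satisfies (G1) in $\cL$, which is condition (a).

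With both (a) and (b) in hand, \autoref{thm:LO2} implies that $(\dgD(\cA)/\cL)^{ca}$ has a unique enhancement, and transporting this along the equivalence \eqref{Dbca} gives the desired statement for $\Db(X)$. There is no real obstacle to overcome: the whole proof is a matter of observing that the hypotheses of \autoref{thm:LO2} coincide with those already verified for \autoref{thm:LO} in the proof of \autoref{cor:geocomp}, so that the harder result (uniqueness for $\Dp(X)$) and the present one (uniqueness for $\Db(X)$) rest on exactly the same technical input, namely \autoref{cptgen}.
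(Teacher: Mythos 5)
Your proposal is correct and follows the same route as the paper: verify that (a) and (b) of \autoref{thm:LO2} hold for the same pair $(\cA,\cL)$ used in the proof of \autoref{cor:geocomp} (noting that these hypotheses are identical to (a) and (b) of \autoref{thm:LO}, so the work done via \autoref{cptgen} and \autoref{Neeman} carries over unchanged), and then transport uniqueness along the equivalence \eqref{Dbca}. The paper states this more tersely, but the content is the same.
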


\begin{proof}
The proofs of \autoref{prop:geocomp} and \autoref{cor:geocomp} actually show that, with these
assumptions on $X$ and our choice of $\cA$, hypotheses (a) and (b) of
\autoref{thm:LO} are satisfied. As they coincide with (a) and (b) in
\autoref{thm:LO2}, we conclude by \eqref{Dbca}.
\end{proof}

\subsection{Fourier--Mukai functors}\label{subsect:FM} Assume that $X_1$ and $X_2$ are noetherian schemes.
Given $\ke\in\D(\Qcoh(X_1\times X_2))$, we define the exact functor $\FM{\ke}\colon\D(\Qcoh(X_1))\to\D(\Qcoh(X_2))$ as
\begin{equation*}\label{eqn:FM}
\FM{\ke}(-):=\rd(p_2)_*(\ke\lotimes p_1^*(-)),
\end{equation*}
where $p_i\colon X_1\times X_2\to X_i$ is the natural projection.

\begin{definition}\label{def:FM}
An exact functor $\fF\colon\D(\Qcoh(X_1))\to\D(\Qcoh(X_2))$ ($\fG\colon\Dp(X_1)\to\Dp(X_2)$, respectively) is a \emph{Fourier--Mukai functor} (or of \emph{Fourier--Mukai type}) if there exists an object $\ke\in\D(\Qcoh(X_1\times X_2))$ and an isomorphism of exact functors $\fF\iso\FM{\ke}$ ($\fG\iso\FM{\ke}$, respectively). 
\end{definition}

These functors are ubiquitous in algebraic geometry (see \cite{CS3} for a survey on the subject) and for a long while it was believed by some people that all exact functors between $\Db(X_1)$ and $\Db(X_2)$, with $X_i$ a smooth projective scheme, had to be of Fourier--Mukai type. A beautiful counterexample by Rizzardo and Van den Bergh \cite{RvdB} showed this expectation to be false. Moreover, if $X_1$ and $X_2$ are not smooth projective it is not even clear if the celebrated result of Orlov \cite{Or} asserting that all exact equivalences between $\Db(X_1)$ and $\Db(X_2)$ are of Fourier--Mukai type holds true.

A much weaker question can be now formulated as follows. For two triangulated categories $\cT_1$ and $\cT_2$, we denote by $\Eq(\cT_1,\cT_2)$ the set of isomorphism classes of exact equivalences between $\cT_1$ and $\cT_2$. When $\cT_i$ is either $\D(\Qcoh(X_i))$ or $\Dp(X_i)$, for $X_i$ a noetherian scheme, we can further define the subset $\Eq[FM](\cT_1,\cT_2)$ consisting of equivalences of Fourier--Mukai type.

As an application of the results in the previous section, we get the following.

\begin{prop}\label{prop:FM}
Let $X_1$ and $X_2$ be noetherian schemes with enough locally free sheaves. Then $\Eq(\Dp(X_1),\Dp(X_2))\neq\emptyset$ if and only if $\Eq(\D(\Qcoh(X_1)),\D(\Qcoh(X_2)))\neq\emptyset$. Moreover, each of the two equivalent conditions implies $\Eq(\Db(X_1),\Db(X_2))\neq\emptyset$.
\end{prop}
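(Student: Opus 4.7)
The plan is to handle the two implications and the final $\Db$ statement separately, combining the uniqueness results for $\Dp$ (\autoref{cor:geocomp}) and for $\D(\Qcoh)$ (\autoref{thm:main1}) with the intrinsic characterizations $\D(\Qcoh(X_i))^c \iso \Dp(X_i)$ from \eqref{perfcpt} and $\Db(X_i) \iso (\dgD(\cA_i)/\cL_i)^{ca}$ from \eqref{Dbca}; throughout I will use that a noetherian scheme with enough locally free sheaves is semi-separated by \cite[Proposition 1.3]{Tot} and concentrated by \cite[Theorem 3.1.1]{BB}. The implication $\Eq(\D(\Qcoh(X_1)),\D(\Qcoh(X_2))) \neq \emptyset \Rightarrow \Eq(\Dp(X_1),\Dp(X_2)) \neq \emptyset$ is essentially automatic: any exact equivalence $\tilde{\fF}\colon\D(\Qcoh(X_1))\to\D(\Qcoh(X_2))$ preserves small coproducts, hence compact objects, and so restricts to an exact equivalence $\Dp(X_1)\to\Dp(X_2)$ via \eqref{perfcpt}.

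For the converse, suppose $\fF\colon \Dp(X_1) \to \Dp(X_2)$ is an exact equivalence. I would choose natural dg enhancements $\cD_i$ of $\D(\Qcoh(X_i))$ (for example $\SF{\cA_i}/\cL_i'$ as in \autoref{subsect:Db}) and let $\cE_i \subseteq \cD_i$ be the full dg subcategory on compact objects, which is an enhancement of $\Dp(X_i)$. Regarding $\cE_2$ as an enhancement of $\Dp(X_1)$ via the composition $\Ho(\cE_2) \iso \Dp(X_2) \to \Dp(X_1)$ (the second map being $\fF^{-1}$), the uniqueness statement \autoref{cor:geocomp} yields a quasi-functor $\fG\colon \cE_1 \to \cE_2$ whose image in $\Ho$ is an exact equivalence. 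Applying the induction functor produces a quasi-equivalence $\Ind(\fG)\colon \SF{\cE_1} \to \SF{\cE_2}$, while the inclusions $\cE_i \mono \cD_i$ extend, via Keller's recognition theorem for compactly generated triangulated categories, to quasi-equivalences $\SF{\cE_i} \to \cD_i$ (both sides are compactly generated with compact subcategory $\cE_i$). Composing these quasi-equivalences and passing to $\Ho$ delivers the desired exact equivalence $\D(\Qcoh(X_1)) \to \D(\Qcoh(X_2))$.

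For the final $\Db$ assertion, assume we have an exact equivalence $\tilde{\fF}\colon \D(\Qcoh(X_1)) \to \D(\Qcoh(X_2))$ (either given directly or produced by the previous step). By \eqref{Dbca}, identifying $\D(\Qcoh(X_i))$ with $\dgD(\cA_i)/\cL_i$, the subcategory $\Db(X_i)$ corresponds to the full subcategory of objects compactly approximated by $\cS_i := \fQ\comp\Yon(\cA_i)$. The defining conditions refer only to compactness, Hom-spaces, and shifts, and the resulting ``compactly approximated'' subcategory is intrinsic to the compactly generated structure: different choices of sets of compact generators give the same subcategory, since the conditions propagate along the thick closure, which recovers all compact objects up to idempotents by a standard Thomason-type argument. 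Since $\tilde{\fF}$ preserves compactness and Hom-spaces, it preserves compact approximation, and its restriction is the required equivalence $\Db(X_1) \to \Db(X_2)$.

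The main obstacle I anticipate is the Keller-style identification $\SF{\cE_i} \simeq \cD_i$ in the second paragraph: one must verify that the natural dg functor induced by $\cE_i \mono \cD_i$ is a quasi-equivalence, i.e.\ that $\cD_i$ is the dg cocompletion of its compact subcategory. This is a standard consequence of compact generation but deserves care; an alternative path is to invoke \autoref{thm:main1} (uniqueness of enhancements of $\D(\Qcoh(X_i))$) after first establishing the triangulated equivalence $\Ho(\SF{\cE_i}) \iso \D(\Qcoh(X_i))$ by independent means, so that $\SF{\cE_i}$ and $\cD_i$ become two dg enhancements of the same triangulated category.
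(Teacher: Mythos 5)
Your argument for the main ``if and only if'' is essentially the paper's. The easy direction (restricting an equivalence of $\D(\Qcoh)$ to compacts via \eqref{perfcpt}) is identical, and your converse direction is the same idea as in the paper phrased slightly more explicitly: you pass from a quasi-equivalence between enhancements $\cE_i$ of $\Dp(X_i)$ (supplied by \autoref{cor:geocomp}) to a quasi-equivalence $\SF{\cE_1}\to\SF{\cE_2}$ via $\Ind$, and then invoke $\SF{\cE_i}\simeq\cD_i$, which is exactly the content of \cite[Proposition~1.16]{LO} that the paper cites (and your ``alternative path'' through \autoref{thm:main1} is also sound). So this part is fine.

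The final assertion about $\Db$ is where your argument has a genuine gap, and where the paper takes a different and cleaner route. You claim that the subcategory of objects compactly approximated by $\cS$ is intrinsic to the compactly generated structure, ``since the conditions propagate along the thick closure.'' This is false in general, and the paper itself warns you: right after introducing $(\dgD(\cA)/\cL)^{ca}$ in \autoref{subsect:Db}, it states ``It must be noted that this definition actually depends on the choice of $\cS$.'' The culprit is the uniform bound $m$ in condition (1): replacing $\cS$ by its thick closure introduces arbitrary shifts of the generators, which destroys any uniform lower vanishing bound, so compact approximation is not preserved under enlarging the generating set to all compacts. Therefore the fact that $\tilde\fF$ preserves compactness and $\Hom$-spaces does not by itself imply that $\tilde\fF$ carries the $\cS_1$-compactly-approximated subcategory to the $\cS_2$-compactly-approximated one; you would have to prove directly that $\tilde\fF(\cS_1)$ and $\cS_2$ define the same $(\farg)^{ca}$, which is essentially the content of what needs to be shown and is not obvious. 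The paper sidesteps this entirely: it uses Rouquier's \cite[Proposition~6.9]{R} to show that any exact equivalence $\D(\Qcoh(X_1))\to\D(\Qcoh(X_2))$ preserves the cohomologically bounded subcategories $\Db(\Qcoh(X_i))$, then passes to compact objects inside those, and identifies $\Db(\Qcoh(X_i))^c\iso\Db(X_i)$ by \cite[Corollary~6.16]{R}. This is the intrinsic characterization of $\Db(X)$ that makes the argument work, and it is what your proof should be using in place of the compact-approximation description from \eqref{Dbca}.
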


\begin{proof}
In view of \eqref{perfcpt}, an exact equivalence $\D(\Qcoh(X_1))\to\D(\Qcoh(X_2))$ restricts to an exact equivalence $\Dp(X_1)\to\Dp(X_2)$, since the subcategories of compact objects are clearly preserved. Hence, $\Eq(\D(\Qcoh(X_1)),\D(\Qcoh(X_2)))\neq\emptyset$ implies that the same is true for the categories of perfect complexes.

On the other hand, assume that $\Eq(\Dp(X_1),\Dp(X_2))\neq\emptyset$. Denoting by $\Perf(X_i)$ a dg enhancement of $\Dp(X_i)$, for $i=1,2$, by \autoref{cor:geocomp} $\Perf(X_1)\iso\Perf(X_2)$ in $\Hqe$. This clearly implies that there is an exact equivalence between $\dgD(\Perf(X_1))$ and $\dgD(\Perf(X_2))$. By \cite[Proposition 1.16]{LO} (see also the proof of \cite[Corollary 9.13]{LO}), there is an exact equivalence between $\dgD(\Perf(X_i))$ and $\D(\Qcoh(X_i))$, for $i=1,2$. Thus $\Eq(\D(\Qcoh(X_1)),\D(\Qcoh(X_2)))\neq\emptyset$.

As for the last statement, assume (without loss of generality by the previous part) that there is $\fF$ in $\Eq(\D(\Qcoh(X_1)),\D(\Qcoh(X_2)))$. By \cite[Proposition 6.9]{R}, the functor $\fF$ sends the subcategory $\Db(\Qcoh(X_1))$ of cohomologically bounded complexes to $\Db(\Qcoh(X_2))$. By using the same argument as above, we see that $\fF$ induces an exact equivalence
\[
\Db(\Qcoh(X_1))^c\longrightarrow\Db(\Qcoh(X_2))^c.
\]
Then we conclude that $\Eq(\Db(X_1),\Db(X_2))\neq\emptyset$, since $\Db(\Qcoh(X_i))^c\iso\Db(X_i)$, for $i=1,2$, by \cite[Corollary 6.16]{R}.
\end{proof}

Notice that, if we assume further that $X_1\times X_2$ is noetherian and that any complex in $\Dp(X_i)$ is isomorphic to a bounded complex of vector bundles, then \cite[Corollary 8.12]{To} and \cite[Theorem 1.1]{LS} imply that
\[
\Eq(\Dp(X_1),\Dp(X_2))\neq\emptyset\text{ iff } \Eq[FM](\Dp(X_1),\Dp(X_2))\neq\emptyset
\]
and
\[
\Eq(\D(\Qcoh(X_1)),\D(\Qcoh(X_2)))\neq\emptyset\text{ iff }\Eq[FM](\D(\Qcoh(X_1)),\D(\Qcoh(X_2)))\neq\emptyset.
\]
Hence \autoref{prop:FM} can be reformulated in terms of the sets of Fourier--Mukai equivalences.

\begin{remark}\label{rmk:FMstr}
By using the observation in \autoref{rmk:strun} and the strategy in the proof of \cite[Corollary 9.12]{LO}, we can make the above remarks more precise, when dealing with perfect complexes. Indeed, pick $\fF\in\Eq(\Dp(X_1),\Dp(X_2))$, for $X_i$ noetherian with enough locally free sheaves and such that $X_1\times X_2$ is noetherian and any complex in $\Dp(X_i)$ is isomorphic to a bounded complex of vector bundles. Then there exists $\fG\in\Eq[FM](\Dp(X_1),\Dp(X_2))$ such that $\fF(C)\iso\fG(C)$, for any $C$ in $\Dp(X_1)$.
\end{remark}


\bigskip

{\small\noindent{\bf Acknowledgements.} Parts of this paper were
  written while P.S.\ was visiting the Institut de Math\'ematiques de
  Jussieu, Universit\'e de Paris 7. The warm hospitality and the
  financial support of this institution is gratefully
  acknowledged. The second author was benefitting from the financial
  support by the ``National Group for Algebraic and Geometric
  Structures, and their Applications'' (GNSAGA--INDAM). It is a great
  pleasure to thank Pieter Belmans, Valery Lunts, Dmitri Orlov and Marco Porta for very useful conversations and comments. We thank Pawel Sosna for carefully reading this paper in a preliminary version and for suggesting several improvements in the exposition. Various email exchanges with Henning Krause were crucial for this work. We warmly thank him for kindly answering all our questions. We are vey grateful to Olaf Schn\"urer who pointed out that all noetherian schemes with enough locally free sheaves are semi-separated. Our deepest thanks goes to the anonymous referee who pointed out a mistake in the previous version of the proof of \autoref{thm:main1} and to Benjamin Antieau who pointed out a gap in the previous version of Theorem C.}


\end{document}